\newcommand{\N}{\mathbb{N}}
\newcommand{\R}{\mathbb{R}}
\renewcommand{\d}{\mathrm{d}}
\newcommand{\Vol}{\mathrm{Vol}}
\renewcommand{\leq}{\leqslant}
\renewcommand{\geq}{\geqslant}
\newtheorem{theorem}{Theorem}  
\newtheorem{proposition}{Proposition}
\newtheorem{corollary}{Corollary}
\newtheorem{definition}{Definition}
\newtheorem{lemma}{Lemma}
\theoremstyle{definition}\newtheorem{example}{Example}
\theoremstyle{definition}\newtheorem{remark}{Remark}
\title{Sub-Riemannian structures on groups of diffeomorphisms}
\date{}
\author{Sylvain Arguill\`ere\footnote{Johns Hopkins University, Center for imaging science, Baltimore, MD, USA ({\tt sarguil1@johnshopkins.edu}).}
\and
Emmanuel Tr\'elat\footnote{Sorbonne Universit\'es, UPMC Univ Paris 06, CNRS UMR 7598, Laboratoire Jacques-Louis Lions, and Institut Universitaire de France, F-75005, Paris, France (\texttt{emmanuel.trelat@upmc.fr}).}
}
\begin{document}

\maketitle

\begin{abstract}
In this paper, we define and study strong right-invariant sub-Riemannian structures on the group of diffeomorphisms of a manifold with bounded geometry. We derive the Hamiltonian geodesic equations for such structures, and we provide examples of normal and of abnormal geodesics in that infinite-dimensional context. The momentum formulation gives a sub-Riemannian version of the Euler-Arnol'd equation.
Finally, we establish some approximate and exact reachability properties for diffeomorphisms, and we give some consequences for Moser theorems.
\end{abstract}

\medskip

\noindent\textbf{Keywords:} group of diffeomorphisms, sub-Riemannian geometry, normal geodesics, abnormal geodesics, reachability, Moser theorems.

\medskip

\noindent\textbf{AMS classification:} 
53C17, %Sub-Riemannian geometry
58D05, %Groups of diffeomorphisms and homeomorphisms as manifolds [See also 22E65, 57S05]
37K65. %Hamiltonian systems on groups of diffeomorphisms and on manifolds of mappings and metrics

\tableofcontents

\section{Introduction}
The purpose of this paper is to define and study right-invariant sub-Riemannian structures on the group of diffeomorphisms of a manifold with bounded geometry, with a particular emphasis on strong structures which are natural structures from a geometric viewpoint. Our work generalizes to the sub-Riemannian case former studies by Arnol'd (see \cite{A}) or by Ebin and Marsden (see\cite{EM}) done in the Riemannian case. In particular, we provide a suitable framework which paves the way towards addressing problems of fluid mechanics settled in a sub-Riemannian manifold.

Our work was initially motivated by problems arising in mathematical shape analysis (see\cite{YBOOK}). The general purpose of shape analysis is to compare several shapes while keeping track of their geometric properties. This is done by finding a deformation, mapping one shape onto the others, which minimizes a certain action that depends on the properties of the shape. %This implies that an action has been assigned to every possible deformation of a shape, the design of this cost function being a crucial step in the method. 
Such approaches have been used in the analysis of anatomical organs in medical images (see \cite{GM}). A deformation can be viewed as the flow of diffeomorphisms generated by a time-dependent vector field (see \cite{DGM,T1,T2}). Indeed, when considering the studied shapes as embedded in a manifold $M$, diffeomorphisms induce deformations of the shape itself. Such deformations preserve local (such as the smoothness) and global (such as the number of self-intersections) geometric properties of the shape. The set of all possible deformations is then defined as the set of flows of time-dependent vector fields on the space $\mathcal{H}_e$ of ``infinitesimal transformations", which is a subspace of the Hilbert space $\Gamma^s(TM)$ of all vector fields on $M$ of Sobolev class $H^s$. We will show, in this paper, that the group of diffeomorphisms of $M$ inherits in such a way of a right-invariant sub-Riemannian structure.

\medskip

Recall that a sub-Riemannian manifold is a triple $(M,\mathcal{H},h)$, where $M$ is a (usually, finite-dimensional) manifold and $(\mathcal{H},h)$ is a Riemannian subbundle of the tangent space $TM$ of $M$, called \emph{horizontal distribution}, equipped with a Riemannian metric $h$. Horizontal curves are absolutely continuous curves on $M$ with velocity in $\mathcal{H}$. Their length is defined with respect to the metric $h$, and then the corresponding sub-Riemannian distance between two points is defined accordingly. We refer the reader to \cite{Bellaiche,MBOOK} for a survey on sub-Riemannian geometry in finite dimension.

For a Lie group $G$ with right Lie algebra $\mathfrak{g}$, a right-invariant sub-Riemannian structure is uniquely determined by the choice of a fixed subspace $\mathfrak{h}\subset\mathfrak{g}$ equipped with an inner product. The corresponding sub-Riemannian structure is then induced by right translations.

The group of diffeomorphisms $\mathcal{D}^s(M)$ of Sobolev class $H^s$ of a manifold $M$ with bounded geometry is a Hilbert manifold. It is a topological group for the composition of diffeomorphisms for which the right-composition by a fixed element is smooth. As a consequence, $\mathcal{D}^s(M)$ admits right-invariant vector fields, the space of which can be identified with the space $\Gamma^s(TM)$ of all vector fields on $M$ of Sobolev class $H^s$. This allows to define right-invariant sub-Riemannian structures on $\mathcal{D}^s(M)$, like for finite dimensional Lie groups, as follows: the choice of a pre-Hilbert space $(\mathcal{H}_e,\left\langle\cdot,\cdot\right\rangle)$, with $\mathcal{H}_e$ a subspace of $\Gamma^s(TM)$, generates by right translation a weak Riemannian subbundle $(\mathcal{H},\left\langle\cdot,\cdot\right\rangle)$ of $T\mathcal{D}^s(M)$. Horizontal curves $t\mapsto\varphi(t)$ of this structure are the flows of time-dependent vector fields $t\mapsto X(t)$ such that $X(t)\in \mathcal{H}_e$ almost everywhere. 

\medskip

There are two main difficulties emerging when studying such a structure. 

The first one is that the subbundle $\mathcal{H}$ obtained is, in general, only continuous. This is due to the fact that $\mathcal{D}^s(M)$ is \emph{not} a Lie group. One way to address that difficulty is to consider a subspace of vector fields of larger regularity, but as a counterpart the resulting subbundles are not closed. 

The second problem is that we are dealing here with an infinite-dimensional sub-Riemannian manifold.
To the best of our knowledge, sub-Riemannian geometry has been left very much unexplored in the infinite-dimensional context. A weak sub-Riemannian structure on $\mathcal{D}^\infty(M)$ has been briefly studied in \cite{KhesinLee} in relation to the transport of smooth volume forms by diffeomorphisms with a sub-Riemannian cost (also see \cite{FigalliRifford}). The metric in \cite{KhesinLee} is not right-invariant though.
A controllability theorem on smooth diffeomorphisms has been established in \cite{AC}, in an approaching context. We also mention the recent paper \cite{GMV}, in which the authors establish the geodesic equations for certain infinite-dimensional weak sub-Riemannian geometries. In this reference, the manifolds considered are modeled on general \emph{convenient spaces} (see \cite{KMBOOK}), and horizontal distributions are closed, with a closed complement. This framework does not involve the case of dense horizontal distributions, that we can deal with using the tools developped in the present paper (and which appear naturally, e.g., when studying shape spaces), and that are required in order to guarantee the smoothness of the structure.

%Whereas there is no canonical way to define the concept of sub-Riemannian geometry in infinite dimension, and whereas a systematic study of infinite-dimensional sub-Riemannian geometries is still to be done, in this paper we will define and develop all corresponding geometric notions that we think appropriate in order to model and investigate shape spaces in this context.

In this paper, we define strong right-invariant sub-Riemannian structures on the group $\mathcal{D}^s(M)$ of diffeomorphisms, and we investigate some properties of such structures. We characterize the geodesics, which are critical points of the action $A$ between two end-points. In infinite dimension a serious difficulty emerges, due to the fact that the range of the differential of the end-point mapping needs not be closed. 
This problem is particularly significant in our setting, because the horizontal distribution itself is not closed in general. This leads to a new type of geodesics, which we call \emph{elusive geodesics}, that cannot be detected using an adequate version of the Pontryagin maximum principle (as will be extensively discussed).
However, we provide sufficient conditions for a curve to be a critical point of the action, in the form of certain Hamiltonian equations, allowing us to prove the existence of a global Hamiltonian geodesic flow. Finally, we establish some results generalizing to this infinite-dimensional setting the famous Chow-Rashevski theorem, and their applications to volume form transport.

\medskip

The paper is organized as follows.

In Section \ref{sec2}, we recall some results on manifolds with bounded geometry and on their groups of diffeomorphisms. We define strong right-invariant sub-Riemannian structures on these groups, and we establish the metric and geodesic completeness for the corresponding sub-Riemannian distance;
In Section \ref{sec_geod}, we compute the differential and the adjoint of the end-point mapping, and we establish the normal and abnormal Hamiltonian geodesic equations in the group of diffeomorphims. 
Examples of normal geodesics are provided in Section \ref{ex}, as well as a characterization of abnormal curves with Dirac momenta. Finally,
in Section \ref{sec3}, we change viewpoint to establish approximate and exact reachability results under appropriate sufficient conditions, generalizing the usual Chow-Rashevski and ball-box theorems of finite-dimensional sub-Riemannian geometry (see \cite{Bellaiche,MBOOK}).

\section{Sub-Riemannian structures on groups of diffeomorphisms}\label{sec2}
Let $d\in\N^*$, and let $(M,g)$ be a smooth oriented Riemannian manifold, where $M$ is a smooth manifold of dimension $d$ equipped with a Riemannian metric $g$.
We assume that $M$ has bounded geometry, that is, we assume that its global injectivity radius $\mathrm{inj}(M)$ is positive and that one of the following equivalent conditions is satisfied:
\begin{enumerate}
\item For every $i\in\N$, the Riemannian norm of the $i$-th covariant derivative of the curvature tensor of $M$ is bounded.
\item For every $i\in \N$, there exists $C_i>0$ such that $\vert \d^ig\vert\leq C_i$
in every normal coordinate chart (defined by the Riemannian exponential map of $M$) of radius $\mathrm{inj}(M)/2$ on $M$.
\end{enumerate}
In this section, we are going to define a sub-Riemannian structure on the manifold $\mathcal{D}^s(M)$, which is the connected component of the neutral element of the group of diffeomorphisms of $M$ of Sobolev class $H^s$.

\subsection{Definition of the manifold $\mathcal{D}^s(M)$}\label{sec_defi}
Let us first settle some notations and recall some results about the manifold structure of the space $H^s(M,N)$ of mappings from $M$ to $N$ of Sobolev class $H^s$, for $s$ large enough. Here, $(N,h)$ is another smooth Riemannian manifold with bounded geometry, and $s\in\N$.

Let $f:M\rightarrow N$ be a smooth mapping. Its differential is a section of the vector bundle
$T^*M\otimes_M f^*TN,$ where $f^*TN=\{(x,v)\in M\times TN \mid v\in T_{f(x)}N\}$.
The metrics $g$ and $h$ and the Levi-Civita connections on $M$ and $N$ induce (unique) Riemannian metrics and connections on $T^*M\otimes f^*TN$ (and bundles of tensors of higher order), which we use to define 
$
\vert \d f\vert_s^2=\sum_{i=0}^{s}\int_M\vert \tilde\nabla^if(x)\vert^2\, \d\Vol (x),
$ 
for every integer $s$, where $\Vol$ is the Riemannian volume on $M$, and $\tilde\nabla$ stands for the connection on $T^*M\otimes f^*TN$.
Then, for every integer $s>d/2$, we define the set $H^{s}(M,N)$ of functions from $M$ to $N$ of Sobolev class $H^s$ as that of $\mathcal{C}^1$ functions from $M$ to $N$ such that $\vert \d f\vert_{s-1}^2<+\infty$ (see \cite{ES,S}). 
The tangent space $T_f{H}^{s}(M,N)$ at $f$ is the set of measurable sections $X$ of $f^*TN$ such that
\begin{equation}\label{sobnorm}
\vert X\vert_s^2=\sum_{i=0}^{s}\int_M \vert \nabla^i X(x)\vert^2\, dx_g <+\infty,
\end{equation}
where $\nabla$ stands for the connection on $f^*TN$.
Note that any $X\in T_fH^s(M,N)$ is bounded and that there exists $C>0$ such that $h_{f(x)}(X(x),X(x))\leq C\vert X\vert_s$ for every $x\in N$.

The set $H^s(M,N)$ is a Hilbert manifold (for $s>d/2$), with tangent space at any $f\in H^s(M,N)$ given by $T_f H^s(M,N) = \{X\in H^s(M,TN) \mid \pi_N \circ X = f \}$, i.e., if $X_f \in T_f H^s(M,N)$ then $X_f(x)\in T_{f(x)} N$.

If $s>d/2+\ell$, with $\ell\in\N\setminus\{0\}$, then we have a continuous inclusion $H^s(M,N)\hookrightarrow \mathcal{C}^\ell(M,N)$.
Moreover, if $M$ is compact, then the inclusion $H^{s+1}(M,N)\hookrightarrow  H^{s'+1}(M,N)$, with $s'<s$, is compact.

Taking $M=N$, we are now in a position to define $\mathcal{D}^s(M)$ (see \cite{EM}). We denote by $H_0^s(M,M)$ the connected component of $e=\mathrm{id}_M$ in $H^s(M,M)$, and by $\mathrm{Diff}(M)$ the set of $\mathcal{C}^1$ diffeomorphims on $M$.

\begin{definition}
We define
$$
\mathcal{D}^s(M)=H_0^s(M,M)\cap \mathrm{Diff}(M), 
$$
that is, $\mathcal{D}^s(M)$ is the connected component of $e=\mathrm{id}_M$ in the space of diffeomorphisms of class $H^s$ on $M$.
\end{definition}

In the sequel, we denote by $\Gamma^s(TM)$ the set of vector fields of class $H^s$ on $M$.

\medskip

From now on, throughout the paper, we assume that $s> d/2+1$.

Then, the set $\mathcal{D}^s(M)$ is an open subset of $H^s(M,M)$, and thus is an Hilbert manifold. For every $\varphi\in \mathcal{D}^s(M)$, we have $T_\varphi \mathcal{D}^s(M) = T_\varphi H^s(M,M) = \Gamma^s(TM) \circ \varphi$.

The set $\mathcal{D}^s(M)$ is also a topological group for the composition $(\varphi,\psi)\mapsto\varphi\circ\psi$, and we list hereafter some of the regularity properties of the composition mappings (see \cite{ES,S}).

For every $\psi\in \mathcal{D}^s(M)$, the right multiplication $R_{\psi}:\varphi\mapsto\varphi\circ\psi$ is a smooth mapping from $\mathcal{D}^s(M)$ to $\mathcal{D}^s(M)$. 
The differential of $R_\psi$ at some point $\varphi\in \mathcal{D}^s(M)$ is the mapping $\d R_\psi: T_\varphi \mathcal{D}^s(M) \rightarrow T_{\varphi\circ\psi} \mathcal{D}^s(M)$ given by $\d R_\psi(\varphi).X=X\circ\psi$, for every $X\in T_\varphi \mathcal{D}^s(M)$.

For every $\varphi\in \mathcal{D}^s(M)$, the left multiplication $L_\varphi:\psi\mapsto\varphi\circ\psi$ is a continuous mapping from $\mathcal{D}^s(M)$ to $\mathcal{D}^s(M)$, but has no more regularity unless $\varphi$ itself is more regular. For instance, if $\varphi$ is of class $H^{s+1}$, then $L_\varphi$ is of class $\mathcal{C}^1$ and its differential at some point $\psi\in \mathcal{D}^s(M)$ is given by $\d L_\varphi(\psi)=(\d \varphi)\circ\psi$.
Actually, for every $k\in\N$, the mapping
$(\varphi,\psi)\in\mathcal{D}^{s+k}(M)\times \mathcal{D}^s(M)\mapsto\varphi\circ\psi\in
\mathcal{D}^s(M)$ is of class $\mathcal{C}^k$.

It is important to note that $\mathcal{D}^s(M)$ is not a Lie group because, although the right multiplication is smooth, the left composition in $\mathcal{D}^s(M)$ is only continuous. It can however be noticed that the set $\mathcal{D}^\infty(M)=\cap_{s> {d}/{2}+1}\mathcal{D}^s(M)$ is a Lie group. More precisely, endowed with the inverse limit topology, $\mathcal{D}^\infty(M)$ is an \emph{inverse limit Hilbert Lie group}, which is a particular type of a Fr\'echet Lie group (see \cite{O}).

\medskip

Let us now identify the right-invariant vector fields on $\mathcal{D}^s(M)$. Note first that the tangent space $T_e \mathcal{D}^s(M)$ of $\mathcal{D}^s(M)$ at $e=\mathrm{id}_M$ coincides with the space $\Gamma^s(TM)$ of vector fields of class $H^s$ on $M$. Using the smoothness of the right multiplication, we consider the set of right-invariant vector fields $\hat X$ on $\mathcal{D}^s(M)$, that is the set of vector fields $\hat X:\mathcal{D}^s(M)\rightarrow T\mathcal{D}^s(M)$ that satisfy $\hat X(\varphi)=\hat X(e)\circ\varphi$. 
Therefore, we have the identification:
$$
\{\textrm{right-invariant vector fields on}\ \mathcal{D}^s(M)\} \simeq T_e\mathcal{D}^s(M) \simeq \Gamma^s(TM).
$$
Although $\mathcal{D}^s(M)$ is not a Lie group, it behaves like a Lie group.

\paragraph{Curves on $\mathcal{D}^s(M)$.}
For every $\varphi(\cdot)\in H^1(0,1;\mathcal{D}^s(M))$, the time-dependent right-invariant vector field $X(\cdot) = \dot\varphi(\cdot)\circ\varphi(\cdot)^{-1} \in L^2(0,1;\Gamma^s(TM))$ is called the \emph{logarithmic velocity} of $\varphi(\cdot)$. Note that, by definition, we have $\dot\varphi(t) = X(t)\circ\varphi(t)$ for almost every $t\in[0,1]$.

Any curve $\varphi(\cdot)\in H^1(0,1;\mathcal{D}^s(M))$ of diffeomorphisms is the flow of a time-dependent right-invariant vector field on $\mathcal{D}^s(M)$ whose norm is square-integrable in time. Conversely, thanks to a generalized version of the Cauchy-Lipschitz theorem (see, e.g.,  \cite{TBOOK}), any time-dependent right-invariant vector field $X(\cdot)\in L^2(0,1;\Gamma^s(TM))$ generates a unique flow $\varphi^X_e(\cdot)\in H^1(0,1;\mathcal{D}^s(M))$ such that $\varphi^X_e(0)=e$, and therefore defines a unique curve $\varphi^X(\cdot)=\varphi_e^X(\cdot)\circ\varphi_0$ for any fixed $\varphi_0\in\mathcal{D}^s(M)$.

In other words, given any $\varphi_0\in\mathcal{D}^s(M)$, there is a one-to-one correspondence 
$$X(\cdot)\longleftrightarrow\varphi^X(\cdot)$$
between time-dependent vector fields $X(\cdot)\in L^2(0,1;\Gamma^s(TM))$ and curves $\varphi(\cdot)\in H^1(0,1;\mathcal{D}^s(M))$ such that $\varphi(0)=\varphi_0$.

\subsection{Sub-Riemannian structure on $\mathcal{D}^s(M)$}\label{sec_SRstructure}
A sub-Riemannian manifold is usually defined as a triple $(\mathcal{M},\Delta,h)$, where $\mathcal{M}$ is a manifold and $(\Delta,h)$ is a smooth Riemannian subbundle of the tangent space $T\mathcal{M}$ of $\mathcal{M}$, called \emph{horizontal distribution}, equipped with a Riemannian metric $h$. 

Here, we keep the framework and notations of Section \ref{sec_defi}. We take $\mathcal{M}=\mathcal{D}^s(M)$, and we are going to define a (right-invariant) horizontal distribution on the manifold $\mathcal{D}^s(M)$, endowed with a (right-invariant) Riemannian metric, that is, to define a (right-invariant) sub-Riemannian structure on the infinite-dimensional manifold $\mathcal{D}^s(M)$.

To this aim, in what follows we are going to consider a subspace $\mathcal{H}_e$ of $\Gamma^s(TM)$, consisting of vector fields that may have more regularity than $H^s$. This subset of vector fields is not necessarily closed. Indeed, in imaging problems the space $\mathcal{H}_e$ is often defined thanks to a heat kernel, as a RKHS (see Remark \ref{rkhs}), and then this set of analytic vector fields is a proper dense subset of $\Gamma^s(TM)$. We will provide examples hereafter.

\medskip

More precisely, let $k\in\N$ be arbitrary. Recall that we have assumed that $s>d/2+1$. Throughout the paper, we consider an arbitrary subset $\mathcal{H}_e\subset\Gamma^{s+k}(TM)$, endowed with a Hilbert product $\langle\cdot,\cdot\rangle$ such that $(\mathcal{H}_e,\langle\cdot,\cdot\rangle)$ has a continuous inclusion $\mathcal{H}_e\hookrightarrow \Gamma^{s+k}(TM)$.

\begin{definition}\label{srstruc}
We consider the subbundle $\mathcal{H}^s$ of $T\mathcal{D}^s(M)$ defined by $\mathcal{H}_\varphi^s=R_\varphi \mathcal{H}_e=\mathcal{H}_e\circ\varphi$ for every $\varphi\in\mathcal{D}^s(M)$, endowed with the (fibered) metric 
$\langle X,Y\rangle_\varphi=\langle X\circ\varphi^{-1},Y\circ\varphi^{-1}\rangle$. This subbundle induces a sub-Riemannian structure on $\mathcal{D}^s(M)$, that we refer to as the \emph{strong right-invariant sub-Riemannian structure} induced by $\mathcal{H}_e$ on $\mathcal{D}^{s}(M)$.
\end{definition}

Note that the mapping $(\varphi,X)\mapsto X\circ\varphi$ gives a parametrization of $\mathcal{H}^s$ by $\mathcal{D}^{s}(M)\times \mathcal{H}_e$, which is of class $\mathcal{C}^{k}$.

If $k\geq 1$, then, for any integer $s'$ such that $s<s'\leq{s}+k$, the restriction of $\mathcal{H}^s$ to $\mathcal{D}^{s'}(M)$ coincides with $\mathcal{H}^{s'}$. In particular, for a fixed diffeomorphism $\varphi\in \mathcal{D}^{s'}(M)$, we have $\mathcal{H}^{s}_\varphi=\mathcal{H}^{s'}_\varphi=\mathcal{H}_e\circ\varphi$. Hence $\mathcal{H}^{s}_\varphi$ does not depend on $s$ and we will simply write it as $\mathcal{H}_\varphi$.

\medskip

Let us provide hereafter two typical and important examples of a subset $\mathcal{H}_e$. 

\begin{example}\label{riem}
The simplest example of a strong right-invariant sub-Riemannian structure on the group of diffeomorphisms $\mathcal{D}^{s}(M)$ is obtained by taking $\mathcal{H}_e=\Gamma^{{s}+k}(TM)$ and $\langle X,X\rangle=\vert X\vert_{s+k}^2$ (as defined by \eqref{sobnorm}). Then $\mathcal{H}^s_{\varphi}$ is the set of all $X\in H^{s}(M,TM)$ such that $X(x)\in T_{\varphi(x)}M$ for every $x\in M$ and $X\circ\varphi^{-1}\in \Gamma^{s+k}(TM)$.

For $k=0$, we have $\mathcal{H}^s_{\varphi}=T\mathcal{D}^s(M)$ and we obtain a Riemannian structure on $\mathcal{D}^s(M)$. Moreover, using a careful computation and the change of variable $x=\varphi(y)$ in the integral of \eqref{sobnorm}, it can be seen that the Riemannian structure is actually smooth, not just continuous.

Such metrics have been studied in \cite{BBHM,BHM,MM}, seen as ``weak" Riemannian metrics on the group of smooth diffeomorphisms (in contrast with ``strong" metrics as in \cite{BV}).

For $k\geq 1$, $\mathcal{H}_e$ is a proper dense subset of $\Gamma^s(TM)$.
\end{example}

%One can get interesting metrics by restriction to closed subspaces of $\mathcal{H}_e$ in the above example.

\begin{example}\label{subriem}
Let $\Delta$ be a smooth subbundle of $TM$, endowed with the restriction of the metric $g$ to $\Delta$. We define the space $\mathcal{H}_e=\left\lbrace X\in\Gamma^{s+k}(TM)\mid \forall x\in M\quad X(x)\in \Delta_x\right\rbrace$ of all horizontal vector fields of class $H^{s+k}$.
Then $\mathcal{H}_\varphi$ is the set of all $X\in H^s(M,TM)$ such that $X(x)\in \Delta_{\varphi(x)}$ for every $x\in M$ and $X\circ\varphi^{-1}\in \Gamma^{s+k}(TM)$.

If $k=0$ then $\mathcal{H}^s$ is a closed subbundle of $T\mathcal{D}^s(M)$, and hence it has a closed orthogonal supplement. We can note that this specific situation enters into the framework studied in \cite{GMV} where partial results concerning the geodesic equations have been established.

If $k\geq 1$ then $\mathcal{H}^s$ is neither closed, nor dense in $T\mathcal{D}^s(M)$.

Note that, although the parametrization $(X,\varphi)\mapsto X\circ\varphi$ is only continuous, $\mathcal{H}$ is actually a smooth subbundle of $T\mathcal{D}^{{s}}(M)$ because the subbundle $\Delta$ itself is smooth.

This example, where we design a sub-Riemannian structure on $\mathcal{D}^s(M)$ induced by a sub-Riemannian structure on the finite-dimensional manifold $M$, has also been considered with $s=+\infty$ in \cite{KhesinLee}, where Moser theorems for horizontal flows have been derived (see also Section \ref{sec_Moser} for such theorems), and in \cite{BGR1,BGR2} in order to handle corrupted data by means of hypoelliptic diffusions.
\end{example}

Note that, as we will see in Section \ref{sec3}, the case $k=0$ in Example \ref{subriem} is the only non-trivial case on which (up to our knowledge) we have an exact reachability result.

These examples show that the parametrization $(X,\varphi)\mapsto X\circ\varphi$ may not be the most appropriate one in some particular situations. We will however keep this point of view throughout the paper, so as to remain as general as possible.

\begin{remark}\label{euler}
In Definition \ref{srstruc}, we consider ``strong" sub-Riemannian structures. This is in contrast with ``weak" sub-Riemannian structures on $\mathcal{D}^s(M)$, for which the norm induced by $\langle\cdot,\cdot\rangle$ on $\mathcal{H}_e$ is not complete. Such weak structures allow to deal with more general metrics such as the one coming from the $L^2$ inner product on vector fields, given by $\langle X,Y\rangle=\int_Mg_x(X(x),Y(x))\,\d\Vol_g(x)$. When $\mathcal{H}_e$ is the set of volume-preserving vector fields, this metric induces a smooth Riemannian metric on the group of volume-preserving diffeomorphisms of $M$, whose geodesics are the solutions of the Euler equations on $M$ (see \cite{A,EM}).
\end{remark}

\begin{remark}\label{rkhs}
Any Hilbert space $\mathcal{H}_e$ of vector fields of class at least $H^{s}$, with $s> d/2$, admits a \emph{reproducing kernel} \cite{TY2,YBOOK}. This means that the operator $\mathcal{H}_e^*\rightarrow \mathcal{H}_e$ given by the inverse of the isometry $X\mapsto \langle X,\cdot\rangle$, is the convolution with a section $K$ of the bundle $L(T^*M,TM)=TM\otimes TM\rightarrow M\times M$, called the reproducing kernel of $\mathcal{H}_e$.

Any element $P$ of the dual $\Gamma^{-s}(T^*M)$ of $\Gamma^s(TM)$ can be represented by a one-form with (distributional) coefficients of class $H^{-s}$, so that $P(X)=\int_M P(x)(X(x))\,\d x$, for every $X\in \Gamma^s(TM)$ (the integral is computed in coordinates by means of a partition of unity). Such a $P$ is called a \emph{co-current}.
By restriction, any co-current $P\in\Gamma^{-s}(T^*M)$ also belongs to the dual $\mathcal{H}_e^*$ of $\mathcal{H}_e$. In particular, for any $(x,p)\in T^*M$, the linear form $p\otimes \delta_x:X\mapsto p(X(x))$ belongs to $\mathcal{H}_e^*$, and the unique element $Y$ such that $\langle Y,\cdot\rangle=p\otimes\delta_x$ on $\mathcal{H}_e$ is denoted by $K(\cdot,x)p$. 
Then $K(x,y)$ is a linear mapping from $T^*_yM$ to $T_xM$, that is an element of $T_yM\otimes T_xM$. Moreover, for every co-current $P\in \Gamma^{-s}(T^*M)$, the unique element $Y$ such that $\langle Y,\cdot\rangle=P$ on $\mathcal{H}_e$ is given by $Y(x)=\int_{M}K(x,y)P(y)\, \d\Vol_g(y)$.
In particular, $K$ is of class at least  $H^s$ on $M\times M$.
\end{remark}

We are now in a position to consider horizontal curves on $\mathcal{D}^s(M)$ for the strong right-invariant sub-Riemannian structure induced by $\mathcal{H}_e$ (see Definition \ref{srstruc}), and to define the corresponding concept of sub-Riemannian distance.

\subsection{Horizontal curves and end-point mapping}
Recall that $\mathcal{H}_e$ has a continuous inclusion in $\Gamma^{s+k}(TM)$, with $k\geq0$ and $s>d/2+1$ integers.

\begin{definition}
An \emph{horizontal curve} for the strong right-invariant sub-Riemannian structure induced by $(\mathcal{H}_e,\langle\cdot,\cdot\rangle)$ on $\mathcal{D}^s(M)$ is a curve $\varphi(\cdot)\in H^1(0,1;\mathcal{D}^s(M))$ such that
$\dot\varphi(t)\in\mathcal{H}^s_{\varphi(t)}$ for almost every $t\in[0,1]$.
Equivalently, $\varphi(\cdot)$ is the right-translation of the flow of a time-dependent vector field $X(\cdot)\in L^2(0,1;\mathcal{H}_e)$.
\end{definition}

For every $\varphi_0\in \mathcal{D}^s(M)$, we define $\Omega_{\varphi_0}$ as the set of all horizontal curves $\varphi(\cdot)\in H^1(0,1;\mathcal{D}^s(M))$ such that $\varphi(0)=\varphi_0$.
We define the mapping $\Phi_{\varphi_0}:L^2(0,1;\mathcal{H}_e)\rightarrow\Omega_{\varphi_0}$ by $\Phi_{\varphi_0}(X(\cdot)) = \varphi^X(\cdot)$, where $\varphi^X(\cdot)$ is the unique solution of the Cauchy problem $\dot\varphi^X(\cdot)=X(\cdot)\circ\varphi^X(\cdot)$, $\varphi^X(0)=\varphi_0$. We have $\Omega_{\varphi_0}=\Phi_{\varphi_0}(L^2(0,1;\mathcal{H}_e))$.

\begin{lemma}\label{horcur}
We assume that $k\geq 1$.
For every $\varphi_0\in \mathcal{D}^s(M)$,  the mapping $\Phi_{\varphi_0}:L^2(0,1;\mathcal{H}_e)\rightarrow\Omega_{\varphi_0}$ is a $\mathcal{C}^k$ diffeomorphism, and the set $\Omega_{\varphi_0}$ is a $\mathcal{C}^k$ submanifold of $H^1(0,1;\mathcal{D}^s(M))$.
\end{lemma}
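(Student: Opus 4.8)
The plan is to realise $\Phi_{\varphi_0}$ as a graph via the implicit function theorem, which simultaneously yields its $\mathcal{C}^k$ regularity and the submanifold structure of $\Omega_{\varphi_0}$. I work inside the ambient Hilbert manifold $H^1(0,1;\mathcal{D}^s(M))$, and to sidestep the bundle bookkeeping attached to the ``moving'' target $T_{\varphi(t)}\mathcal{D}^s(M)$ I would fix an isometric embedding of the bounded-geometry manifold $M$ into some Euclidean space (or else argue in local charts), so that $\dot\varphi(\cdot)$ and $X(\cdot)\circ\varphi(\cdot)$ become genuine maps with values in a fixed Hilbert space. The evaluation map $\mathrm{ev}_0:\varphi(\cdot)\mapsto\varphi(0)$ is a smooth submersion, so $\mathcal{W}_{\varphi_0}=\mathrm{ev}_0^{-1}(\varphi_0)$ is a smooth submanifold of $H^1(0,1;\mathcal{D}^s(M))$, inside which $\Omega_{\varphi_0}$ sits. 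First I would record that $\Phi_{\varphi_0}$ is a bijection onto $\Omega_{\varphi_0}$: this is exactly the one-to-one correspondence $X(\cdot)\leftrightarrow\varphi^X(\cdot)$ supplied by the generalised Cauchy--Lipschitz theorem, with inverse the logarithmic velocity $\varphi(\cdot)\mapsto\dot\varphi(\cdot)\circ\varphi(\cdot)^{-1}$, which by the very definition of a horizontal curve takes values in $L^2(0,1;\mathcal{H}_e)$.

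Next I would set up the implicit function theorem around the map $\mathcal{C}(X,\varphi)=\dot\varphi-X\circ\varphi$, defined on $L^2(0,1;\mathcal{H}_e)\times\mathcal{W}_{\varphi_0}$, so that $\mathcal{C}(X,\varphi)=0$ encodes horizontality with the correct initial point. The central regularity input is that the time-lifted superposition operator $(X,\varphi)\mapsto(t\mapsto X(t)\circ\varphi(t))$ is $\mathcal{C}^k$ from $L^2(0,1;\mathcal{H}_e)\times H^1(0,1;\mathcal{D}^s(M))$ into the relevant $L^2$-in-time space. This rests on the $\mathcal{C}^k$ pointwise parametrization $(\varphi,X)\mapsto X\circ\varphi$ of $\mathcal{H}^s$ stated right after Definition \ref{srstruc} (which is precisely where $\mathcal{H}_e\subset\Gamma^{s+k}(TM)$ and $k\geq1$ are used, since for $k=0$ the composition is only continuous), promoted to time-dependent fields by a Nemytskii/superposition argument exploiting the embedding $H^1\hookrightarrow\mathcal{C}^0$ in the time variable for $\varphi(\cdot)$ against the mere $L^2$-in-time integrability of $X(\cdot)$. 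Granting this, $\mathcal{C}$ is $\mathcal{C}^k$, and its partial differential $\partial_\varphi\mathcal{C}(X,\varphi)$ is the linear ODE operator $\delta\varphi\mapsto\dot{\delta\varphi}-(dX\circ\varphi)\,\delta\varphi$ acting on curves with $\delta\varphi(0)=0$; since $X(t)\in\Gamma^{s+k}(TM)$ with $k\geq1$ gives $dX(t)\in\Gamma^s(TM)$ and an $L^2$-in-time family of bounded operators, variation of constants shows $\partial_\varphi\mathcal{C}(X,\varphi)$ is a Banach-space isomorphism onto $L^2$. The implicit function theorem then exhibits the zero set of $\mathcal{C}$ locally as the graph $\{(X,\Phi_{\varphi_0}(X))\}$, proving $\Phi_{\varphi_0}$ is $\mathcal{C}^k$ and identifying $d\Phi_{\varphi_0}(X)$ as the solution operator of the linearised flow.

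Finally I would upgrade this local picture to the global statement. The explicit form of $d\Phi_{\varphi_0}(X)$ as a linearised-flow operator shows it is injective with closed, complemented image, so $\Phi_{\varphi_0}$ is a $\mathcal{C}^k$ immersion; together with the global injectivity from the first step and the continuity of the inverse logarithmic-velocity map (continuous because $\varphi\mapsto\varphi^{-1}$ and composition are continuous on $\mathcal{D}^s(M)$), this makes $\Phi_{\varphi_0}$ a homeomorphism onto its image, hence a $\mathcal{C}^k$ embedding. Consequently $\Omega_{\varphi_0}=\Phi_{\varphi_0}(L^2(0,1;\mathcal{H}_e))$ is a $\mathcal{C}^k$ submanifold of $H^1(0,1;\mathcal{D}^s(M))$ and $\Phi_{\varphi_0}$ is a $\mathcal{C}^k$ diffeomorphism onto it. I expect the main obstacle to be the $\mathcal{C}^k$-regularity of the superposition operator $(X,\varphi)\mapsto X\circ\varphi$ at the level of the time-function spaces: one must carefully control the interplay between the only $L^2$-integrable control $X(\cdot)$ and the $H^1$ (hence continuous) curve $\varphi(\cdot)$, and justify differentiating under the time integral $k$ times using the pointwise $\mathcal{C}^k$ bounds on composition, which is exactly the place where the gain of $k$ derivatives in $\mathcal{H}_e\subset\Gamma^{s+k}(TM)$ compensates the loss of derivatives inherent in composition on $\mathcal{D}^s(M)$.
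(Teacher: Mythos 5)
Your proposal is correct and follows essentially the same route as the paper: both apply the implicit function theorem to the constraint map $C(\varphi(\cdot),X(\cdot))=\dot\varphi(\cdot)-X(\cdot)\circ\varphi(\cdot)$, use the $\mathcal{C}^k$ regularity of the parametrization $(\varphi,X)\mapsto X\circ\varphi$ (which is where $k\geq 1$ enters), and observe that $\partial_\varphi C$ is a first-order linear differential operator which is an isomorphism by the linear Cauchy--Lipschitz theorem in Banach spaces. Your additional care about the superposition operator's regularity in the time-function spaces and about upgrading the local graph picture to a global embedding fills in details the paper leaves implicit, but does not change the argument.
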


\begin{proof}
Using the correspondence $\varphi(\cdot)\leftrightarrow X(\cdot)$ described in Section \ref{sec_defi} (for $\varphi_0$ fixed), it suffices to prove that the graph in $H^1(0,1;\mathcal{D}^s(M))\times L^2(0,1;\mathcal{H}_e)$ of the $\mathcal{C}^k$ mapping $X(\cdot)\mapsto \varphi^X(\cdot)$ is a $\mathcal{C}^k$ manifold, globally parametrized by $X(\cdot)\mapsto (\varphi^X(\cdot),X(\cdot)\circ\varphi^X(\cdot))$.

We denote by $H_{\varphi_0}^1(0,1;\mathcal{D}^s(M))$ the set of $\varphi(\cdot)\in H^1(0,1;\mathcal{D}^s(M))$ such that $\varphi(0)=\varphi_0$.
We define $H^1_{\varphi_0}\times L^2(0,1;T\mathcal{D}^s(M))$ as the fiber bundle over $H^1_{\varphi_0}(0,1;T\mathcal{D}^s(M))$ defined by
$\{ (\varphi(\cdot),\delta\varphi(\cdot))\in H_{\varphi_0}^1(0,1;\mathcal{D}^s(M))\times L^2(0,1;T\mathcal{D}^s(M)) \mid  \delta\varphi(t)\in T_{\varphi(t)}\mathcal{D}^s(M)\ \text{for a.e.}\ t\in[0,1]  \}. $
We consider the affine vector bundle morphism $C: H^1(0,1;\mathcal{D}^s(M))\times L^2(0,1;\mathcal{H}_e)\rightarrow H^1_{\varphi_0}\times L^2(0,1;T\mathcal{D}^s(M))$
defined by $C(\varphi(\cdot),X(\cdot))(t)=\dot{\varphi}(t)-X(t)\circ\varphi(t)$.
Then $C$ is of class $\mathcal{C}^k$ and $\Omega_{\varphi_0}=C^{-1}(\{0\})$. In coordinates on $\mathcal{D}^s(M)$, we have $\partial_\varphi C(\varphi(\cdot),X(\cdot)).\delta\varphi(\cdot)=\delta \dot\varphi(\cdot)-\d (X(\cdot)\circ\varphi(\cdot)).\delta\varphi(\cdot)$, which is a continuous linear differential operator of order one, and the linear Cauchy-Lipschitz theorem in Banach spaces  implies that it is an isomorphism. The lemma follows from the implicit function theorem.
\end{proof}

We will often identify a horizontal curve $\varphi(\cdot)\in \Omega_{\varphi_0}$ with its logarithmic velocity $X(\cdot)=\dot{\varphi}(\cdot)\circ\varphi(\cdot)^{-1}$.

%We thus have a global charts for the $\mathcal{C}^k$ manifold $\Omega_{\varphi_0}$.

\begin{definition}\label{end-point}
For every $\varphi_0\in\mathcal{D}^s(M)$, the \emph{end-point mapping} $\mathrm{end}_{\varphi_0}:\Omega_{\varphi_0}\rightarrow \mathcal{D}^s(M)$ is defined by $\mathrm{end}_{\varphi_0}(\varphi(\cdot))=\varphi(1)$, for every $\varphi(\cdot)\in \Omega_{\varphi_0}$.
\end{definition}

The end-point mapping is of class $\mathcal{C}^k$. 

\begin{definition}\label{def_singular}
An horizontal curve $\varphi(\cdot)\in H^1(0,1;\mathcal{D}^s(M))$ is said to be \emph{singular} if the codimension of $\mathrm{Range}(\d\,\mathrm{end}_{\varphi_0}(\varphi(\cdot)))$ in $T_{\varphi_1}\mathcal{D}^s(M)$ is positive.
\end{definition}

\begin{remark}\label{rem_sing2}
An horizontal curve $\varphi(\cdot)\in H^1(0,1;\mathcal{D}^s(M))$ is singular if and only if there exists $P_{\varphi_1}\in T^*_{\varphi_1}\mathcal{D}^s(M)\setminus\{0\}$ such that $(\d\,\mathrm{end}_{\varphi_0}(\varphi(\cdot)))^*.P_{\varphi_1}=0$. 

Examples of singular curves of diffeomorphisms are provided in Section \ref{singdirac}. We will see that such curves can easily be built by considering a sub-Riemannian manifold $M$ on which there exists a nontrivial singular curve $\gamma(\cdot)$ (such manifolds do exist, see, e.g., \cite{MBOOK}), and by taking $\mathcal{H}_e$ as the set of horizontal vector fields of class $H^{{s}+k}$ on $M$, as explained in Example \ref{subriem}. Then the flow of any horizontal vector field $X$ such that $X\circ\gamma(\cdot)=\dot{\gamma}(\cdot)$ happens to be a singular curve of diffeomorphisms.

Theorem \ref{thm_abnormal} (further) will provide an Hamiltonian characterization of singular curves.
\end{remark}

Given $\varphi_0$ and $\varphi_1$ in $\mathcal{D}^s(M)$, we consider the set $\Omega_{\varphi_0,\varphi_1}=\mathrm{end}_{\varphi_0}^{-1}(\{\varphi_1\})$ of horizontal curves steering $\varphi_0$ to $\varphi_1$. %, together with their logarithmic velocities. 

\begin{remark}\label{rem_sing}
The set $\Omega_{\varphi_0,\varphi_1}$ need not be a submanifold of $\Omega_{\varphi_0}$, due to the fact that $\mathrm{end}_{\varphi_0}$ need not be a submersion.
In the finite-dimensional context, a singularity of this set is exactly a singular curve, that is a critical point of the end-point mapping, or equivalently, the projection of an abnormal extremal (see \cite{ChitourJeanTrelatJDG,MBOOK}).
In infinite dimension, the situation is more complicated because of the possible existence of proper subsets that are dense.
More precisely, since we are in infinite dimension, either of the three following possibilities may occur:
\begin{enumerate}
\item $\mathrm{Range}(\d\,\mathrm{end}_{\varphi_0}(\varphi(\cdot))) = T_{\varphi_1}\mathcal{D}^s(M)$;
\item the codimension of $\mathrm{Range}(\d\,\mathrm{end}_{\varphi_0}(\varphi(\cdot)))$ in $T_{\varphi_1}\mathcal{D}^s(M)$ is positive;
\item $\mathrm{Range}(\d\,\mathrm{end}_{\varphi_0}(\varphi(\cdot)))$ is a proper dense subset of $T_{\varphi_1}\mathcal{D}^s(M)$.
\end{enumerate}
In finite dimension, only the first two possibilities occur. In the first case it is usually said that $\varphi(\cdot)$ is \emph{regular}, and in that case $\Omega_{\varphi_0,\varphi_1}$ is, locally around $\varphi(\cdot)$, a submanifold of $\Omega_{\varphi_0}$.

However, in the present infinite-dimensional framework, the first possibility never occurs. Indeed, it is required that $k\geq1$ for the end-point mapping to have a differential, and then we have
$$
\mathrm{Range}(\d\,\mathrm{end}_{\varphi_0}(\varphi(\cdot)))\subset \Gamma^{s+1}(TM)\circ \varphi_1\varsubsetneq T_{\varphi_1}\mathcal{D}^s(M).
$$ 
In particular, we have to deal with the third possibility.
In the context of controlled partial differential equations, this third possibility corresponds to a situation where the control system is approximately controllable but not exactly controllable.
\end{remark}

\subsection{Sub-Riemannian distance and action}

\begin{definition}\label{defaction}
The \emph{sub-Riemannian length} $L(\varphi(\cdot))$ and the \emph{sub-Riemannian action} $A(\varphi(\cdot))$ of an horizontal curve $\varphi(\cdot)\in H^1(0,1;\mathcal{D}^s(M))$ with logarithmic velocity $X(\cdot)=\dot\varphi(\cdot)\circ\varphi(\cdot)^{-1} \in L^2(0,1;\mathcal{H}_e)$ are respectively defined by
$$
L(\varphi(\cdot)) = \int_0^1 \sqrt{\langle X(t),X(t)\rangle}\, \d t \quad \textrm{and}\quad 
A(\varphi(\cdot))= \frac{1}{2}\int_0^1 {\langle X(t),X(t)\rangle}\, \d t .
$$
The \emph{sub-Riemannian distance} $d_{SR}$ between two elements $\varphi_0$ and $\varphi_1$ of $\mathcal{D}^s(M)$ is defined as the infimum of the length of horizontal curves steering $\varphi_0$ to $\varphi_1$, with the agreement that $d_{SR}(\varphi_0,\varphi_1)=+\infty$ whenever there is no horizontal curve steering $\varphi_0$ to $\varphi_1$.

An horizontal curve $\varphi(\cdot):[0,1]\rightarrow \mathcal{D}^s(M)$ is said to be \emph{minimizing} if $d_{SR}(\varphi(0),\varphi(1)) = L(\varphi(\cdot))$. 
\end{definition}

We have $d_{SR}(\varphi_0,\varphi_1)=d_{SR}(e,\varphi_1\circ\varphi_0^{-1})$, that is, $d_{SR}$ is right-invariant.
Concatenation and time-reversals of horizontal curves are horizontal as well, hence $d_{SR}$ is at least a semi-distance, and the subset $\{\varphi_1\in \mathcal{D}^s(M)\mid  d_{SR}(e,\varphi_1)<\infty\}$ is a subgroup of $\mathcal{D}^s(M)$ that does not depend on $s$ (it is proved in Theorem \ref{complete} below that $d_{SR}$ is a distance). Moreover, it follows from the Cauchy-Schwarz inequality that $L^2\leq 2A$, and therefore $d_{SR}(\varphi_0,\varphi_1)$ is equal to $\sqrt{2}$ times the infimum of the action over all horizontal curves steering $\varphi_0$ to $\varphi_1$. Therefore, as in classical finite-dimensional sub-Riemannian geometry, minimizing the length over horizontal curves between two end-points is equivalent to minimizing the action over this set.

\begin{theorem}\label{complete}
The sub-Riemannian distance $d_{SR}$ is indeed a distance (taking its values in $[0,+\infty]$), that is, $d_{SR}(\varphi_0,\varphi_1)=0$ implies $\varphi_0=\varphi_1$. Moreover, any two elements $\varphi_0$ and $\varphi_1$ of $\mathcal{D}^s(M)$ with $d_{SR}(\varphi_0,\varphi_1)<+\infty$ can be connected by a minimizing horizontal curve, and $(\mathcal{D}^s(M),d_{SR})$ is a complete metric space.
\end{theorem}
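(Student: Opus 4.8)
The plan is to prove the three assertions in turn --- non-degeneracy of $d_{SR}$, existence of minimizers, and metric completeness --- the last two both resting on a single weak-compactness-and-lower-semicontinuity argument that is the technical heart. For non-degeneracy I would control the uniform distance by the sub-Riemannian length. Since $s>d/2+1$, the Sobolev embedding $\Gamma^{s+k}(TM)\hookrightarrow\mathcal{C}^1$ together with the continuous inclusion $\mathcal{H}_e\hookrightarrow\Gamma^{s+k}(TM)$ furnishes a constant $C>0$ with $\Vert X\Vert_{\mathcal{C}^0}\le C\sqrt{\langle X,X\rangle}$ for every $X\in\mathcal{H}_e$. For a horizontal curve $\varphi(\cdot)$ with logarithmic velocity $X(\cdot)$, the curve $t\mapsto\varphi(t,x)$ in $M$ has velocity $X(t,\varphi(t,x))$, hence $g$-length at most $\int_0^1\Vert X(t)\Vert_{\mathcal{C}^0}\,\d t\le C\,L(\varphi(\cdot))$. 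Taking the supremum over $x\in M$ and then the infimum over horizontal curves gives $\sup_{x}d_M(\varphi_0(x),\varphi_1(x))\le C\,d_{SR}(\varphi_0,\varphi_1)$, where $d_M$ is the Riemannian distance on $M$; thus $d_{SR}(\varphi_0,\varphi_1)=0$ forces $\varphi_0=\varphi_1$.

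The core step is the following compactness statement: if $X_n\rightharpoonup X$ weakly in $L^2(0,1;\mathcal{H}_e)$, then the flows $\varphi^{X_n}_{\varphi_0}(\cdot)$ converge uniformly in $t$ and locally uniformly on $M$, together with first derivatives, to $\varphi^{X}_{\varphi_0}(\cdot)$; in particular the endpoints converge and $L,A$ are weakly sequentially lower semicontinuous. Here the reproducing kernel of $\mathcal{H}_e$ (Remark \ref{rkhs}) is decisive: weak convergence in $\mathcal{H}_e$ gives pointwise convergence $p(X_n(y))=\langle X_n,K(\cdot,y)p\rangle\to p(X(y))$, which, combined with the uniform $\mathcal{C}^1$-bounds coming from $\sup_n\int_0^1\Vert X_n(t)\Vert\,\d t<\infty$ and an Arzel\`a--Ascoli argument, upgrades to locally uniform convergence of $X_n$ and $\d X_n$; a Gr\"onwall estimate then lets me pass to the limit in $\varphi_n(t)=\varphi_0+\int_0^t X_n(\tau)\circ\varphi_n(\tau)\,\d\tau$ and identify the limit as $\varphi^X$. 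Lower semicontinuity of $A$ is simply weak lower semicontinuity of the squared $L^2(0,1;\mathcal{H}_e)$-norm. Existence of minimizers then follows by the direct method: given $\varphi_0,\varphi_1$ with $d_{SR}(\varphi_0,\varphi_1)<\infty$, take a minimizing sequence of horizontal curves, reparametrize to constant speed so the logarithmic velocities $X_n(\cdot)$ are bounded in $L^2(0,1;\mathcal{H}_e)$, extract a weak limit $X$, and use the core step to get a horizontal curve with $\varphi^X(0)=\varphi_0$, $\varphi^X(1)=\lim\varphi_n(1)=\varphi_1$ and $L(\varphi^X)\le\liminf L(\varphi_n)=d_{SR}(\varphi_0,\varphi_1)$, hence minimizing.

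For metric completeness, let $(\varphi_n)$ be $d_{SR}$-Cauchy; by the first paragraph it is uniformly Cauchy, and its $d_{SR}$-diameter is finite. Connecting the $\varphi_n$ to a fixed element by curves of uniformly bounded length and running Gr\"onwall flow estimates (again using $\mathcal{H}_e\hookrightarrow\Gamma^{s+k}$, $s>d/2+1$) bounds $\Vert\varphi_n\Vert_{H^s}$, gives a uniform lower bound on the Jacobian of $\varphi_n$, and a uniform $\mathcal{C}^1$-bound on $\varphi_n^{-1}$. Weak $H^s$-compactness together with the uniform convergence produces a limit $\varphi\in H^s(M,M)$, which the Jacobian bounds force to be a diffeomorphism, so $\varphi\in\mathcal{D}^s(M)$. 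Finally, applying the core step to the weak limit, as $m\to\infty$, of the bounded logarithmic velocities of near-minimizing curves joining $\varphi_n$ to $\varphi_m$ yields $d_{SR}(\varphi_n,\varphi)\le\liminf_m d_{SR}(\varphi_n,\varphi_m)$, which tends to $0$; thus $\varphi_n\to\varphi$ for $d_{SR}$.

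The main obstacle is precisely the core compactness step: in this infinite-dimensional setting there is no local compactness, so the Hopf--Rinow strategy is unavailable and the passage to the limit must exploit the reproducing-kernel structure of $\mathcal{H}_e$ to convert weak convergence of the logarithmic velocities into pointwise --- and then, via the uniform derivative bounds, locally uniform --- convergence of the vector fields, before feeding them into the Gr\"onwall/continuity argument for the flow and its endpoint. The Jacobian non-degeneracy estimate needed to keep the $H^s$-limit inside $\mathcal{D}^s(M)$ is the other delicate point, but it is more routine given the $\mathcal{C}^1$-control of $X$ by $\langle X,X\rangle$.
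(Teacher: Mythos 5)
Your proposal is correct and follows essentially the same route as the paper: non-degeneracy via the continuous embedding $\mathcal{H}_e\hookrightarrow\Gamma^{s+k}(TM)\hookrightarrow\mathcal{C}^1$ controlling $d_M(x,\varphi_1(x))$ by the length, and both the existence of minimizers and completeness via weak compactness of bounded sets of logarithmic velocities in $L^2(0,1;\mathcal{H}_e)$, uniform convergence of the corresponding flows, and weak lower semicontinuity of the action. The only differences are cosmetic: the paper delegates your ``core compactness step'' to a cited lemma (Lemma~\ref{seqcomp}, proved in \cite{ATY}) rather than sketching the reproducing-kernel argument, and it obtains membership of the completeness limit in $\mathcal{D}^s(M)$ directly as the endpoint of a horizontal curve emanating from $\varphi_1^n$, which makes your separate Jacobian/weak-$H^s$ argument unnecessary though not incorrect.
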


\begin{proof}
The proof follows, in the context of sub-Riemannian geometry, some arguments of \cite{T1}.

Firstly, in order to prove that $d_{SR}(\varphi_0,\varphi_1)=0$ implies $\varphi_0=\varphi_1$, since $d_{SR}$ is right-invariant, it suffices to prove that $\varphi_1\neq e$ implies $d_{SR}(e,\varphi_1)>0$.
Let $\varphi(\cdot) \in\Omega_{e}$ be an horizontal curve such that $\varphi(0)=e$ and $\varphi(1)=\varphi_1\neq e$, and let $X(\cdot)$ be its logarithmic velocity. Let $x\in M$ such that $\varphi_1(x)\neq x$. Setting $x(t)=\varphi(t,x)$, we have $\dot{x}(t)=X(t,x(t))$, and then, $0<d_M(x,\varphi_1(x))^2\leq \int_0^1g_{x(t)}(X(t,x(t)),X(t,x(t)))\,\d t
$, where $d_M$ is the Riemannian distance on $M$. Since there exist positive constants $C_1$ and $C_2$ such that $g_y(Y(y),Y(y))\leq C_1\vert Y\vert^2_s\leq C_2\langle Y,Y\rangle$, for every $y\in M$ and for every $Y\in \mathcal{H}_e$, it follows that
$0<d_M(x,\varphi_1(x))^2\leq C_2\int_0^1\langle X(t),X(t)\rangle\,\d t=2C_2A(\varphi(\cdot))$.
Since this inequality is true for every horizontal curve $\varphi(\cdot)$ steering $e$ to $\varphi_1$, we get that $0<d_M(x,\varphi_1(x))\leq d_{SR}(e,\varphi_1)$ (note by the way that this inequality holds true for every $x\in M$).

Secondly, since the structure is right-invariant, it suffices to prove that any $\varphi_1\in \mathcal{D}^s(M)$ such that $d_{SR}(e,\varphi_1)<\infty$ can be reached from $e$ by a minimizing horizontal curve. In order to prove this fact, we use the following lemma, which itself mainly follows from the Sobolev embedding theorem (see \cite{ATY} for the proof). 

\begin{lemma}\label{seqcomp}
Let $(X^n(\cdot))_{n\in\N}$ be a bounded sequence of $L^2(0,1;\mathcal{H}_e)$, consisting of logarithmic velocities of horizontal curves $(\varphi^n(\cdot))_{n\in\N}$ such that $\varphi^n(0)=e=\mathrm{id}_M$. Then, there exist $\bar X(\cdot)\in L^2(0,1;\mathcal{H}_e)$, corresponding to the horizontal curve $\bar\varphi(\cdot)$, and an increasing sequence $({n_j})_{j\in\N}$ of integers such that $(X^{n_j}(\cdot))_{j\in\N}$ converges weakly to $\bar X(\cdot)$ as $j$ tends to $+\infty$, and such that
$$
\sup_{t\in[0,1]}d_{H^{{s}+k-1}(U,M)}(\varphi^{n_j}(t),\bar\varphi(t)) \rightarrow 0 ,
$$ 
%converges to $0$ 
as $j$ tends to $+\infty$, for every compact subset $U$ of $M$.
\end{lemma}

Let $(X^n(\cdot))_{n\in\N}$ be a minimizing sequence of $L^2(0,1;\mathcal{H}_e)$, associated with horizontal curves $(\varphi^n(\cdot))_{n\in\N}$, for the problem of minimizing the action over all horizontal curves steering $e$ to $\varphi_1$. By Lemma \ref{seqcomp}, up to some subsequence, the sequence $(X^n(\cdot))_{n\in\N}$ converges weakly to $\bar{X}(\cdot)$, associated with an horizontal curve $\bar\varphi(\cdot)$ such that $\bar\varphi(1)=\varphi_1$ (note that $s+k-1> d/2$). Hence $\bar\varphi(\cdot)$ is an horizontal curve steering $e$ to $\varphi_1$, and by lower semi-continuity of the action, we have $A(\bar\varphi(\cdot))=\frac{1}{2}\int_0^1\langle\bar{X}(t),\bar{X}(t)\rangle\,\d t\leq\liminf_n\frac{1}{2}\int_0^1\langle X^n(t),X^n(t)\rangle\,\d t$, and hence $\bar\varphi(\cdot)$ is a minimizing horizontal curve steering $e$ to $\varphi_1$.

Let us finally prove that $(\mathcal{D}^s(M),d_{SR})$ is complete. Let $(\varphi_1^n)_{n\in \N}$ be a Cauchy sequence of $\mathcal{D}^s(M)$. Then $(\varphi_1^n)_{n\in\N}$ is a Cauchy sequence as well in $\mathcal{C}^0(M,M)$ for the metric topology of uniform convergence on compact subsets, which is complete, and therefore converges to some $\bar\varphi_1\in \mathcal{C}^0(M,M)$. 
To conclude, it suffices to prove that $\bar\varphi_1\in\mathcal{D}^s(M)$ and that $d_{SR}(\varphi_1^n,\bar\varphi_1)\rightarrow 0$ as $n\rightarrow+\infty$.

For every integer $m\geq n$, let $X^{n,m}(\cdot)$ be the logarithmic velocity of a minimizing horizontal curve $\varphi^{n,m}(\cdot)$ steering $\varphi_1^n$ to $\varphi_1^m$. For every $n$, the sequence $(X^{n,m}(\cdot))_{m\geq n}$ (indexed by $m$) is bounded in $L^2(0,1;\mathcal{H}_e)$, and hence from Lemma \ref{seqcomp}, up to some subsequence it converges weakly to some $\bar X^n(\cdot)\in L^2(0,1;\mathcal{H}_e)$, which is the logarithmic velocity of an horizontal curve $\bar\varphi^n(\cdot)$. Moreover $\varphi_1^m=\varphi^{n,m}(1)\rightarrow\bar\varphi^n(1)$ in $\mathcal{C}^0(M,M)$ as $m\rightarrow+\infty$. But since $\varphi_1^m\rightarrow\bar\varphi_1$, it follows that $\bar\varphi^n(1)=\bar\varphi_1$. In particular, $\bar\varphi_1 \in \mathcal{D}^s(M)$ and $\bar\varphi_1^n(\cdot)$ is an horizontal curve steering $\varphi_1^n$ to $\bar\varphi_1$. 
By weak convergence of $X^{n,m}(\cdot)$ to $\bar X^n(\cdot)$ as $m\rightarrow+\infty$, and by lower semi-continuity, we infer that
$$
\begin{aligned}
d_{SR}(\bar\varphi_1,\varphi_1^n)^2 &\leq 2A(\bar\varphi^n(\cdot)) = \int_0^1\langle\bar X^n(t),\bar X^n(t)\rangle\,\d t\\
&\leq
\liminf_{m\rightarrow\infty}\int_0^1\langle X^{n,m(t)}, X^{n,{m}}(t)\rangle\,\d t
=
\liminf_{m\rightarrow\infty} 2A(\varphi^{n,m}(\cdot)) = \liminf_{k\rightarrow\infty} d_{SR}(\varphi_1^n,\varphi_1^m)^2.
\end{aligned}
$$
The equality in the last part is due to the fact that $\varphi^{n,m}(\cdot)$ is a minimizing horizontal curve steering $\varphi_1^n$ to $\varphi_1^m$. 
Since $(\varphi_1^n)_{n\in\N}$ is a Cauchy sequence, the right-hand side of the above inequality tends to $0$ as $n\rightarrow+\infty$, and hence $d_{SR}(\bar\varphi_1,\varphi_1^n)\rightarrow 0$ as $n\rightarrow+\infty$.
\end{proof}

\begin{remark}
The topology defined by $d_{SR}$ is always finer or as coarse as the manifold topology of $\mathcal{D}^s(M)$. Indeed, $ C'\vert X\vert_s\leq\left<X,X\right>$ implies that $d_{SR}$ is greater than $C'$ multiplied by the strong Riemannian distance induced by the structure described in Example \ref{riem} with $k=0$. But it was proved in \cite{BV} that this last metric induces the intrinsic manifold topology on $\mathcal{D}^s(M)$.
\end{remark}

\section{Geodesics on $\mathcal{D}^s(M)$}\label{sec_geod}
We keep the framework and notations used in the previous sections. %Recall that the action $A$ is defined in Definition \ref{defaction}.

\begin{definition}\label{def_geod}
A \emph{geodesic} $\varphi(\cdot)\in H^1(0,1;\mathcal{D}^s(M))$ is an horizontal curve which is a critical point of the action mapping $A$ restricted to $\Omega_{\varphi(0),\varphi(1)}$. 
In other words, for any $\mathcal{C}^1$ family of horizontal curves $s\in (-\varepsilon,\varepsilon)\mapsto\varphi^s(\cdot)\in \Omega_{\varphi(0),\varphi(1)}$, with $\varepsilon>0$ and $\varphi^0(\cdot)=\varphi(\cdot)$, we have $\d A(\varphi(\cdot)).\partial_s\varphi^s(\cdot)_{\vert s=0}=0$. With a slight abuse of notation, we will denote by $T_{\varphi(\cdot)}\Omega_{\varphi(0),\varphi(1)}$ the set of all such infinitesimal variations $\partial_s\varphi^s(\cdot)_{\vert s=0}$.

A geodesic $\varphi(\cdot)$ is said to be \emph{minimizing} if $L(\varphi(\cdot))=d_{SR}(\varphi(0),\varphi(1))$.
\end{definition}

Note that, obviously, any minimizing horizontal curve is a geodesic.

\subsection{Preliminary discussion: Lagrange multipliers}
In finite dimension, the critical point property usually leads to a Lagrange multipliers relation, which provides a first-order necessary condition for optimality, itself allowing us to derive Hamiltonian geodesic equations. Here, since we are in infinite dimension, the situation is more complex and we do not have necessarily a nontrivial Lagrange multiplier.
Let us be more precise with this important difficulty, because it justifies the point of view that we are going to adopt in the sequel. The discussion goes as follows.

Let $\varphi(\cdot)\in H^1(0,1;\mathcal{D}^s(M))$ be a minimizing horizontal curve steering $\varphi_0\in\mathcal{D}^s(M)$ to $\varphi_1\in\mathcal{D}^s(M)$. Then $\varphi(\cdot)$ is a geodesic, that is, a critical point of the action $A$ restricted to $\Omega_{\varphi_0,\varphi_1}=\mathrm{end}_{\varphi_0}^{-1}(\{\varphi_1\})$. Defining the mapping $E_{\varphi_0} : L^2(0,1;\mathcal{H}_e)\rightarrow \mathcal{D}^s(M)$ by the composition $E_{\varphi_0} = \mathrm{end}_{\varphi_0}\circ\Phi_{\varphi_0}$, and defining the mapping $F_{\varphi_0} : L^2(0,1;\mathcal{H}_e)\rightarrow \mathcal{D}^s(M)\times \R$ by
$$
F_{\varphi_0}(X(\cdot)) = ( E_{\varphi_0}(X(\cdot)) , A(\Phi_{\varphi_0}(X(\cdot))) ) = ( \mathrm{end}_{\varphi_0}(\varphi^X(\cdot)) , A(\varphi^X(\cdot)) ) ,
$$
it follows that the logarithmic velocity $X(\cdot)$ of $\varphi(\cdot)$ is a critical point of $F_{\varphi_0}$; or, in other words, the differential $\d F_{\varphi_0}(X(\cdot))$ is not surjective, that is, $\mathrm{Range}(\d F_{\varphi_0}(X(\cdot)))$ is a proper subset of $T_{\varphi_1}\mathcal{D}^s(M)\times \R$.
Then, there are two possible cases:
\begin{enumerate}
\item either the codimension of $\mathrm{Range}(\d F_{\varphi_0}(X(\cdot)))$ in $T_{\varphi_1}\mathcal{D}^s(M)\times \R$ is positive, which is equivalent to the fact that $\ker ((\d F_{\varphi_0}(X(\cdot)))^*)\neq\{0\}$,
\item or the space $\mathrm{Range}(\d F_{\varphi_0}(X(\cdot)))$ is dense in $T_{\varphi_1}\mathcal{D}^s(M)\times \R$, which is equivalent to the fact that $\ker ((\d F_{\varphi_0}(X(\cdot)))^*)=\{0\}$.
\end{enumerate}
The first case means that we have a nontrivial Lagrange multiplier, and the second case means that there does not exist any nontrivial Lagrange multiplier. Note that the second case can never occur in finite dimension. Here, since we are in infinite dimension, we have to face with this additional difficulty.

Since $\d\Phi_{\varphi_0} : L^2(0,1;\mathcal{H}_e) \rightarrow T_{\varphi(\cdot)}\Omega_{\varphi_0}$ is an isomorphism, it follows that, for the geodesic $\varphi(\cdot)=\varphi^X(\cdot)$, there are two possible issues:
\begin{enumerate}
\item There exists $(P_{\varphi_1},p^0)\in T^*_{\varphi_1}\mathcal{D}^s(M)\times\R\setminus\{(0,0)\}$ such that 
\begin{equation}\label{Lag_mult}
(\d\,\mathrm{end}_{\varphi_0}(\varphi(\cdot)))^*.P_{\varphi_1} + p^0 \d A(\varphi(\cdot)) =0.
\end{equation}
This is a Lagrange multipliers relation.
In finite dimension, only this first possibility does occur, and leads to the Pontryagin maximum principle (see, e.g., \cite{TBOOK} for this point of view). Note that the Lagrange multiplier $(P_{\varphi_1},p^0)$ is defined up to some multiplying scalar, and usually it is normalized by distinguishing between two subcases:
\begin{enumerate}
\item \emph{Normal case:} $p^0\neq 0$. In that case, we normalize the Lagrange multiplier so that $p^0=-1$. Then \eqref{Lag_mult} implies that $\d A(\varphi(\cdot)) = (\d\,\mathrm{end}_{\varphi_0}(\varphi(\cdot)))^*.P_{\varphi_1}$, and in that case we will then derive the so-called \emph{normal geodesic equations}.
\item \emph{Abnormal case:} $p^0=0$. In that case, \eqref{Lag_mult} implies that $(\d\,\mathrm{end}_{\varphi_0}(\varphi(\cdot)))^*.P_{\varphi_1}=0$ (and for instance we can normalize the Lagrange multiplier by normalizing $P_{\varphi_1}$). This is equivalent to saying that the corank of $\d\,\mathrm{end}_{\varphi_0}(\varphi(\cdot))$ is positive: in other words, according to Definition \ref{def_singular}, $\varphi(\cdot)$ is a singular curve. In that case, we will then derive the co-called \emph{abnormal geodesic equations}, which are the Hamiltonian characterization of  singular curves.

\begin{remark}
If $\varphi(\cdot)$ is a singular curve, then there exists $P_{\varphi_1}\in T^*_{\varphi_1}\mathcal{D}^s(M)\setminus\{0\}$ such that $(\d\,\mathrm{end}_{\varphi_0}(\varphi(\cdot)))^*.P_{\varphi_1}=0$. In other words, there exists an abnormal Lagrange multiplier. This Lagrange multiplier is not necessarily unique (up to some multiplying scalar), and the dimension of the space of such Lagrange multipliers is usually called the corank of the singular curve (see \cite{ChitourJeanTrelatJDG,ChitourJeanTrelatSICON} where generic properties are established for singular curves in finite dimension).
\end{remark}

\end{enumerate}

\item The mapping $(\d F_{\varphi_0}(X(\cdot)))^*$ is injective, that is, if we have \eqref{Lag_mult} for some $(P_{\varphi_1},p^0)\in T^*_{\varphi_1}\mathcal{D}^s(M)$, then $(P_{\varphi_1},p^0)=(0,0)$.

This case is peculiar to the infinite-dimensional setting, and can never occur in finite dimension.
In that case a necessary condition for optimality in the form of a Pontryagin maximum principle cannot be derived (see \cite[Chapter 4]{LiYong}). This leads us to state the following definition.
\end{enumerate}

\begin{definition}
A geodesic $\varphi(\cdot)\in H^1(0,1;\mathcal{D}^s(M))$ is said to be \emph{elusive} whenever the mapping $(\d F_{\varphi_0}(X(\cdot)))^*$ is injective.
\end{definition}

In what follows, we are going to state a Pontryagin maximum principle for non-elusive geodesics (that is, for the first case of the above discussion), and derive the normal and abnormal geodesic equations.

\begin{remark}\label{rem_elusive}
The concept of elusive geodesic is new, and is specific to the infinite dimension. What is important to understand is that elusive geodesics escape to the dual analysis in terms of Lagrange multipliers, due to the topology of the ambient space.

As it follows from the definition of an elusive curve, a Lagrange multiplier cannot exist because, although the mapping $\d F_{\varphi_0}(X(\cdot))$ is not surjective, its range is however dense in the target space $T_{\varphi_1}\mathcal{D}^s(M)\times\R$. This difficulty, which is specific to the infinite-dimensional setting, is actually well known in constrained optimization. In \cite{Kurcyusz} the author provided some weak regularity conditions under which the existence of Lagrange multipliers can be established for a general nonlinear programming problem. He showed that the topology of the target space plays an important role, and he established a connection between the choice of suitable function spaces and the existence of Lagrange multipliers.

\smallskip

Before commenting on this choice, let us first provide an easy way to exhibit elusive geodesics. The idea relies on the fact that $\mathcal{H}_e$ is not closed in $\Gamma^s(TM)=T_e\mathcal{D}^s(M)$, which results in ``missing" some initial momenta. For example, if $\varphi_0=e$ and if $k\geq 3$, then restricting $\mathcal{H}^s$ to $\mathcal{D}^{s+1}(M)$ gives new initial momenta $P_0$, namely, those belonging to $\Gamma^{-s-1}(T^*M)\setminus\Gamma^{-s}(T^*M)$ (where $\Gamma^{-s}(T^*M)$ is the dual of $\Gamma^s(TM)=T_e\mathcal{D}^s(M)$).

Based on this idea, the method to exhibit elusive geodesics consists of ``decreasing the order" of the cotangent space, in the following sense. The Hilbert space $(\mathcal{H}_e,\left\langle\cdot,\cdot\right\rangle)$, with $\mathcal{H}_e=\Gamma^{s+3}(TM)$, induces a right-invariant sub-Riemannian structure on $\mathcal{D}^{s+1}(M)$. Anticipating a bit, let $t\mapsto(\varphi(t),P(t))$ be a normal geodesic on $T^*\mathcal{D}^{s+1}(M)$ with $\varphi(0)=e$ and $P(0)\in\Gamma^{-s-1}(T^*M)\setminus\Gamma^{-s}(T^*M)$. 
Then, we claim that $\varphi(\cdot)$ is an elusive horizontal curve for the sub-Riemannian structure induced by $(\mathcal{H}_e,\left\langle\cdot,\cdot\right\rangle)$ on $\mathcal{D}^s(M)$. 

Indeed, it is clear that horizontal curves starting at $e$ coincide for both structures. 
Let us prove that $\varphi(\cdot)$ can have neither a normal nor an abnormal Lagrange multiplier, in the sub-Riemannian structure on $\mathcal{D}^s(M)$. If there would exist a normal Lagrange multiplier, then the corresponding initial momentum $P'(0)\in \Gamma^{-s}(T^*M)$ would satisfy $P(0)=P'(0)$ on $\mathcal{H}_e=\Gamma^{s+3}(TM)$ which is dense in $\Gamma^s(TM)$, and we would have $P'(0)=P(0)$, which is impossible. There cannot exist an abnormal Lagrange multiplier, because the range of $\d\,\mathrm{end}_{e}(\varphi(\cdot))$ contains $\mathcal{H}_{\varphi(1)}=T_{\varphi_1}\mathcal{D}^{s+3}(M)$, which is dense in $T_{\varphi_1}\mathcal{D}^s(M)$. This proves the claim.

\smallskip

Conversely, we can get new normal geodesics for a right-invariant sub-Riemannian structure on $\mathcal{D}^s(M)$ induced by a Hilbert subspace of vector fields with continuous inclusion in $\Gamma^{s+3}(TM)$, by restricting it to $\mathcal{D}^{s+1}(M)$, adding extra initial momenta by increasing the order of the cotangent bundle. In such a way, some elusive geodesics become normal geodesics. Note that it may happen that some elusive geodesics become abnormal curves because of the increased range in the choice of momenta. However, note also that this simple process does not turn every elusive geodesic into either a normal or an abnormal geodesic.

\smallskip

We conclude that, in accordance with \cite{Kurcyusz}, the choice of the cotangent space (and thus, the choice of the topology of the target space) is important. 
The stronger is the topology in the target space, and the larger is the dual, but then Lagrange multipliers become more and more irregular. It is therefore reasonable to avoid choosing a too strong topology in the target space.

In our setting there does not seem to exist a best possible choice for the cotangent bundle (better in the sense that, by adding new possibilities for the initial momenta, we would turn every possible elusive geodesic into either a normal or an abnormal geodesic). The question of finding a ``good" space of initial or final momenta (implying the absence of elusive geodesics) is open and seems quite difficult.%
\footnote{Note that these difficulties are also due to the fact that the sub-Riemannian problem consists of minimizing the action $A(\varphi(\cdot))$ over all horizontal curves $\varphi(\cdot)$ such that $\mathrm{end}_{\varphi_0}(\varphi(\cdot))=\varphi_1$, that is, of minimizing a functional under an infinite number of constraints. Assume that, instead, we consider the problem of minimizing the penalized functional $J(X(\cdot)) = \int_0^1\langle X(t),X(t)\rangle\,\d t+G(\mathrm{end}_{\varphi_0}(\varphi^X(\cdot)))$.
%over the set $L^2(0,1;\mathcal{H}_e)$, for some $\mathcal{C}^1$ function $G$.
If $G$ is $\mathcal{C}^1$ and bounded below, then this (unconstrained) penalized problem has at least one solution $X(\cdot)$, and there exists a momentum mapping $t\mapsto P(t)$ such that $P(1)+\d G_{\varphi(1)}=0$ and such that $(\varphi(\cdot),P(\cdot))$ is solution of the normal geodesic Hamiltonian geodesic equations \eqref{eq_geod}.
This claim follows from an easy adaptation of the proofs of the results in the present paper (see also \cite{ATY}).
This framework can be used in order to ``approach" a target diffeomorphism $\varphi_1$ with an horizontal curve, by choosing a penalization function $G$.}
For instance, the authors found examples of Hilbert Lie groups (more precisely, $\ell^2(\N,\R^4)$, with $\R^4$ equipped with the Engel group structure) for which the natural space of momenta is not even locally convex.
\end{remark}

%However, we do have the following result, a more general version of which was proven in \cite{ATY} which is very useful for applications.
%
%\begin{lemma}
%Let $g:G_{l,d}\rightarrow\R$ be a map of class $\mathcal{C}^1$, and $(V,g)$ be a sub-Riemannian structure of order at least $1$ on $G_{l,d}$. Then an horizontal curve $t\mapsto\varphi(t)$ is a critical point of the functional
%$$
%J(\varphi)=A(\varphi)+g(E(\varphi))
%$$
%if and only if $\varphi$ is a normal geodesic with momentum $t\mapsto p(t)$ that satisfies $p(1)+dg_{E(\varphi)}=0$. Moreover, if $g$ is bounded from below and the structure is strong, such critical points always exist.
%\end{lemma}
%
%So, if we are only trying to "approach" a diffeomorphism $\varphi_1$ with an horizontal curve, by taking $g$ to be a smooth penalisation term, we only need normal geodesics. Moreover, if $g$ extends to a map of class $\mathcal{C}^1$ on a $G_{l',d}$, with $l'\leq l$, we can obtain even more information.

Following this preliminary discussion, we are now going to derive the normal and abnormal geodesic equations, which are Hamiltonian characterizations of non-elusive geodesics.
We stress again that, in general, no such Hamiltonian characterization can be derived for an elusive geodesic.

We will prove that any solution of the normal geodesic Hamiltonian equations, if it is well defined on $[0,1]$, projects onto a geodesic (critical point of the action). Therefore, instead of giving necessary conditions to minimize the action, we are rather going to provide sufficient conditions under which we have a non-elusive geodesic.

Hereafter, we first establish the normal geodesic equations, then the abnormal geodesic equations, and we finally provide necessary conditions for optimality.
The three theorems are then proved together.

\subsection{Normal geodesic equations}
%We keep the framework and notations used in the previous sections.

Let $K_{\mathcal{H}_e}:\mathcal{H}_e^*\rightarrow\mathcal{H}_e$ be the inverse of the operator $X\mapsto\langle X,\cdot\rangle$. By analogy with the classical Riemannian case, we call this operator the \textit{sub-musical isomorphism}.
We define the $\mathcal{C}^k$ vector bundle morphism $K_{\mathcal{H}_\varphi}: T^*_\varphi\mathcal{D}^s(M)\rightarrow T_\varphi\mathcal{D}^s(M)$ by $K_{\mathcal{H}_\varphi} = \d R_\varphi \, K_{\mathcal{H}_e} \, \d R_{\varphi}^*$
for every $\varphi\in\mathcal{D}^s(M)$. 

\begin{definition}
We define the \emph{normal Hamiltonian} $h:T^*\mathcal{D}^s(M)\rightarrow\R$ by
\begin{equation}\label{def_normalHam}
h(\varphi,P)=\frac{1}{2} P(K_{\mathcal{H}_\varphi }P),
\end{equation}
for every $(\varphi,P)\in T^*\mathcal{D}^s(M)$.
\end{definition}

The expression \eqref{def_normalHam} means that, as in classical sub-Riemannian geometry (see \cite{MBOOK}), $h(\varphi,P)$ is the squared norm of $P$ for the cometric induced on $T^*\mathcal{D}^s(M)$ by the sub-Riemannian structure. This is the usual way to define the normal Hamiltonian.
%Any element $P\in T^*_\varphi\mathcal{D}^s(M)$ of the cotangent bundle is called a \emph{momentum}. 

In canonical coordinates, we have $K_{\mathcal{H}_\varphi }P=X(\varphi,P)\circ\varphi$, with $X(\varphi,P)=K_{\mathcal{H}_e}(\d R_{\varphi})^*.P$, and hence 
$$
h(\varphi,P)=\frac{1}{2}P(X(\varphi,P)\circ\varphi)=\frac{1}{2}\langle X(\varphi,P),X(\varphi,P)\rangle.
$$
Note that $(\varphi,P)\mapsto X(\varphi,P)$ is of class $\mathcal{C}^k$. 

Denoting by $\omega$ the canonical strong symplectic form on $T^*\mathcal{D}^s(M)$, the symplectic gradient $\nabla^\omega h:T^*\mathcal{D}^s(M)\rightarrow  TT^*\mathcal{D}^s(M)$ of $h$ (which is of class $\mathcal{C}^{k-1}$) is defined by the relation $\d h=\omega(\nabla^\omega h,\cdot)$. In canonical coordinates, we have $\nabla^\omega h =(\partial_Ph,-\partial_\varphi h)$, where $\partial_Ph\in T_\varphi^{**}\mathcal{D}^s(M)=T_\varphi\mathcal{D}^s(M)$ thanks to the natural isomorphism between a Hilbert space and its bidual space, and we have
$$
\nabla^\omega h(\varphi,P)= \left(K_{\mathcal{H}_\varphi} P,-(\partial_\varphi K_{\mathcal{H}_\varphi} P)^*.P\right)
= (X(\varphi,P)\circ\varphi,-(\partial_\varphi(X(\varphi,P)\circ\varphi))^*.P) .
$$
%
%\begin{remark}
%It is interesting to note that $\dot{P}(t)$ is obtained by a pointwise multiplication of $P(t)$ by the $H^s$-section of $\mathrm{End}(\varphi^*T^*M)$ given by $-(\partial_\varphi(X(\varphi,P)\circ\varphi))^*$.
%\end{remark}

\begin{theorem}\label{normalgeod}
We assume that $k\geq 2$. Then the symplectic gradient of $h$ is of class $\mathcal{C}^{k-1}$ and admits a global flow: for every $\varphi_0\in \mathcal{D}^s(M)$ and every $P_0\in T_{\varphi_0}^*\mathcal{D}^s(M)$, there is a unique global solution $(\varphi(\cdot),P(\cdot)): \R \rightarrow T^*\mathcal{D}^s(M)$ (meaning that $P(t)\in T_{\varphi(t)}^*\mathcal{D}^s(M)$ for every $t$) of 
\begin{equation}\label{eq_geod}
(\dot{\varphi}(t),\dot{P}(t)) = \nabla^\omega h(\varphi(t),P(t)), \quad t\in\R,
\end{equation}
and such that $(\varphi(0),P(0))=(\varphi_0,P_0)$. This global flow is of class $H^1$ with respect to $t$, and of class $\mathcal{C}^{k-1}$ with respect to the initial conditions $(\varphi_0,P_0)$.
Moreover, $\varphi(\cdot)$ is a geodesic on any sub-interval of $\R$, which implies that the norm of its logarithmic velocity is constant.
\end{theorem}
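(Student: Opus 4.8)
The plan is to read \eqref{eq_geod} as an autonomous ordinary differential equation on the Hilbert manifold $T^*\mathcal{D}^s(M)$, driven by the vector field $\nabla^\omega h$, and to obtain existence, uniqueness and completeness of its flow by the standard Banach-space machinery together with a single conservation law. First I would record that, since $(\varphi,P)\mapsto X(\varphi,P)$ is of class $\mathcal{C}^k$ and $\nabla^\omega h$ is obtained from it by one $\varphi$-differentiation, the vector field $\nabla^\omega h$ is of class $\mathcal{C}^{k-1}$; as $k\geq 2$ it is in particular locally Lipschitz, so the Cauchy-Lipschitz theorem in Banach spaces yields, for each initial datum $(\varphi_0,P_0)$, a unique solution on a maximal interval $(t_-,t_+)$, together with $\mathcal{C}^{k-1}$ dependence on $(\varphi_0,P_0)$ via the smooth-dependence theorem. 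The regularity in $t$ (of class $H^1$, in fact $\mathcal{C}^1$) is then read off directly from \eqref{eq_geod}. Everything therefore reduces to proving that $t_\pm=\pm\infty$.

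Next I would exploit the Hamiltonian structure: along any solution, $\frac{\d}{\d t}h(\varphi(t),P(t))=\d h\cdot\nabla^\omega h=\omega(\nabla^\omega h,\nabla^\omega h)=0$, so $h$ is constant. Since $h(\varphi,P)=\tfrac12\langle X(\varphi,P),X(\varphi,P)\rangle$, this says exactly that the logarithmic velocity $X(t)=X(\varphi(t),P(t))$ has constant $\mathcal{H}_e$-norm, which already settles the final assertion of the theorem; and through the continuous inclusion $\mathcal{H}_e\hookrightarrow\Gamma^{s+k}(TM)$ it provides the crucial a priori bound $\sup_t\vert X(t)\vert_{s+k}<\infty$ on all of $(t_-,t_+)$.

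The main obstacle is completeness, and more precisely the control of the momentum component, because $\nabla^\omega h$ is genuinely \emph{quadratic} in $P$: its $P$-component is $-(\partial_\varphi(X(\varphi,P)\circ\varphi))^*P$ and $X(\varphi,P)$ is itself linear in $P$, so a naive Gronwall estimate only gives $\vert\dot P\vert\lesssim C(\varphi)(\vert P\vert+\vert P\vert^2)$ and threatens finite-time blow-up. The resolution I would use is that the conservation law linearizes the equation along the actual trajectory. Using $\langle X,K_{\mathcal{H}_e}\nu\rangle=\nu(X)$ and $(\d R_\varphi^*P)(Z)=P(Z\circ\varphi)$, a short computation collapses $\partial_\varphi h\cdot\delta\varphi$ to $P((\d X\circ\varphi)\cdot\delta\varphi)$, so that the momentum equation reads $\dot P=-(\d X(t)\circ\varphi(t))^*P$ with $X(t)$ the logarithmic velocity. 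Regarding $X(t)$ as the \emph{given} bounded family furnished by energy conservation, this is a \emph{linear} ODE in $P$ whose generator $A(t)=-(\d X(t)\circ\varphi(t))^*$ has operator norm on $T^*_{\varphi(t)}\mathcal{D}^s(M)$ controlled by $\vert X(t)\vert_{s+k}$ (constant) and by the $H^s$-size of $\varphi(t)$, using that $H^s$ is a Banach algebra and $s+k-1>s>d/2$. To close the argument I would first handle $\varphi$: since $\dot\varphi=X(t)\circ\varphi$ with $X\in L^2_{\mathrm{loc}}(\R;\Gamma^s(TM))$, the generalized Cauchy-Lipschitz theorem for flows keeps $\varphi(t)$ inside $\mathcal{D}^s(M)$ and keeps its $H^s$-size bounded on every finite interval, so $\varphi$ never reaches the boundary of the open set $\mathcal{D}^s(M)\subset H^s(M,M)$. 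With $\varphi(t)$ bounded on $[0,t_+)$ the generator $A(t)$ is uniformly bounded there, whence $\vert P(t)\vert\leq\vert P_0\vert e^{Ct}$ remains bounded up to $t_+$. If $t_+<\infty$, the limit $(\varphi(t_+),P(t_+))$ would then be a genuine point of $T^*\mathcal{D}^s(M)$ from which the flow could be restarted, contradicting maximality; hence $t_+=+\infty$, and symmetrically $t_-=-\infty$.

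It remains to prove that the projection $\varphi(\cdot)$ is a geodesic on every subinterval. For this I would invoke the computation of $\d\,\mathrm{end}_{\varphi_0}$ and of $\d A$ carried out in this section: a solution of \eqref{eq_geod} satisfies the normal Lagrange-multiplier relation \eqref{Lag_mult} with $p^0=-1$, that is $\d A(\varphi(\cdot))=(\d\,\mathrm{end}_{\varphi_0}(\varphi(\cdot)))^*.P(1)$, and by right-invariance and the flow property the same identity holds on any subinterval $[t_1,t_2]$. This is precisely the first-order condition for $\varphi(\cdot)$ to be a critical point of $A$ on $\Omega_{\varphi(t_1),\varphi(t_2)}$, hence a geodesic there, and the constancy of $\langle X,X\rangle$ established above reappears as the constancy of the norm of its logarithmic velocity. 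This last part is the statement shared with the abnormal and necessary-condition theorems, and is naturally proved jointly with them.
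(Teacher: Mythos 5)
Your proposal is correct, and its overall architecture coincides with the paper's: local existence, uniqueness and $\mathcal{C}^{k-1}$ dependence on initial data from the $\mathcal{C}^{k-1}$ regularity of $\nabla^\omega h$; conservation of $h$ along the flow giving $\Vert X(t)\Vert\equiv\Vert X(0)\Vert$; and the geodesic property obtained from the adjoint end-point computation (Lemma \ref{adjend}) together with the sufficiency of the normal Lagrange-multiplier relation --- the paper likewise proves this last part jointly with Theorems \ref{thm_abnormal} and \ref{thm_pmp}, exactly as you anticipate. The one place where you genuinely depart from the paper is the completeness of the flow, i.e.\ the control of the momentum up to the endpoint $t_+$ of the maximal interval. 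The paper announces that this step ``will be proved thanks to the momentum formulation'' and indeed invokes Proposition \ref{prop_momentum}: from $\mu(t)=(\d R_{\varphi(t)})^*P(t)=\varphi(t)_*\mu(0)$ and the continuity of $(\varphi,\mu)\mapsto\varphi_*\mu$ on $\mathcal{D}^{s+1}(M)\times\Gamma^{-s}(T^*M)$, the convergence $\varphi(t)\rightarrow\varphi_b$ in $\mathcal{D}^{s+k}(M)$ (itself a consequence of the uniform bound on $X$) immediately yields convergence of $\mu(t)$, hence of $P(t)$. You instead observe that, once $X(t)$ is regarded as a given bounded family, the momentum equation $\dot P=-\bigl((\d X(t))\circ\varphi(t)\bigr)^*P$ is linear with generator uniformly bounded on $[0,t_+]$, and you close by Gronwall. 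Both arguments are valid, and your reduction of $\partial_\varphi h$ to $P\bigl((\d X\circ\varphi)\cdot\delta\varphi\bigr)$ via the envelope identity $\nabla^\omega h=\nabla^\omega H$ is exactly what the paper uses. Your route is more elementary and does not need Proposition \ref{prop_momentum}, but two points deserve a sentence: first, the uniform bound on the generator rests on the \emph{local boundedness} of the composition operators $\varphi\mapsto f\circ\varphi$ (composition being only continuous, not smooth, in this category --- the paper invokes this fact only in the proof of Lemma \ref{calcleng}); second, to restart the flow at $t_+$ you should note that the Gronwall bound makes $\dot P$ bounded on $[0,t_+)$, so that $P(t)$ actually converges there rather than merely staying bounded. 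What the paper's route buys in exchange is the closed-form conservation law $\mu(t)=\varphi(t)_*\mu(0)$, which it reuses afterwards (Corollary \ref{consord}).
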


\begin{definition}
In the conditions of Theorem \ref{normalgeod}, $\varphi(\cdot)$ is said to be a \emph{normal geodesic}, the couple $(\varphi(\cdot),P(\cdot))$ is said to be a \emph{normal extremal lift} of $\varphi(\cdot)$, and $P(\cdot)$ is called a \emph{covector}.
\end{definition}

%Note that a normal geodesic $\varphi(\cdot)$ may have several normal extremal lifts. In that case, it must be singular.

Theorem \ref{normalgeod} says that, if $k\geq2$, then $\mathcal{D}^s(M)$ admits a global normal geodesic flow, of class $H^1$ in time and $C^{k-1}$ in the initial conditions. Note that, since $h$ is of class $\mathcal{C}^k$, it is already clear that $\nabla^\omega h$ is of class $\mathcal{C}^{k-1}$ and thus admits a unique maximal flow. The fact that integral curves of $\nabla^\omega h$ project onto geodesics will be proved in Section \ref{proofgeod}, and the global property of the flow will be proved thanks to the momentum formulation stated in Section \ref{momentum}.

\begin{remark}\label{rem_locexpression}
In canonical coordinates, the normal geodesic equations \eqref{eq_geod} are written as
$$
\dot{\varphi}(t) = K_{\mathcal{H}_{\varphi(t)} } P(t),\quad
\dot{P}(t) = - (\partial_{\varphi}K_{\mathcal{H}_{\varphi(t)} }  P(t))^*.P(t).
$$
Note that, if $K$ is the reproducing kernel associated with $\mathcal{H}_e$ (see Remark \ref{rkhs}), then we have
$$
\begin{aligned}
\partial_t \varphi(t,x)&=\int_M K(\varphi(t,x),\varphi(t,y))P(t,y)\,dy_g,\\
\partial_t P(t,x)&=-P(t,x)\int_M \partial_1K(\varphi(t,x),\varphi(t,y))P(t,y)\,dy_g,
\end{aligned}
$$
for every $x\in M$. 
%As a consequence, the support of $P$ does not depend on $t$, as well as its order as a distributional one-form. 

Note that these equations are not partial differential equations or integro-differential equations. They are ordinary differential equations whose terms are $\mathcal{C}^1$ and non-local.
The main interest of this formulation is that the geodesic equations are completely explicit, and can be implemented numerically with relative ease and efficiency. The computation of the reproducing kernel of $\mathcal{H}_e$, which is no easy task, is not required. However, in many cases, particularly in shape analysis, $\mathcal{H}_e$ is not given explicitly: instead, it is defined through an explicit kernel, which simplifies matters greatly (see \cite{ATY,TY2,YBOOK} and Section \ref{ex}).
\end{remark}

\begin{remark}
According to the notations above, the logarithmic velocity of a normal geodesic $\varphi(\cdot)$ is given by 
$$X(\cdot)=X(\varphi(\cdot),P(\cdot))=K_{\mathcal{H}_e}(\d R_{\varphi(\cdot)})^*.P(\cdot).$$
\end{remark}

\begin{remark}
If $\mathcal{H}_e$ has a continuous inclusion in $\Gamma^s(TM)$ for every $s\in\N$ (which implies that $\mathcal{H}_e$ has a continuous injection in the Fr\'echet space of smooth vector fields), then, since any compactly supported co-current $P$ (that is, any one-form with distributional coefficients) belongs to $\Gamma^{-s}(T^*M)$ for some $s\in\N$, it follows that any such $P$ generates a locally minimizing normal geodesic starting at $e$.
Therefore, the Fr\'echet Lie group $\mathcal{D}^\infty(M)=\cap_{s> {d}/{2}+1}\mathcal{D}^s(M)$ inherits of a strong right-invariant sub-Riemannian structure.
\end{remark}

\subsection{Abnormal geodesic equations}
The abnormal geodesic equations actually provide as well a Hamiltonian characterization of singular curves.

\begin{definition}
We define the \emph{abnormal Hamiltonian} $H^0: T^*\mathcal{D}^s(M)\times \mathcal{H}_e \rightarrow \R$ by
\begin{equation*}%\label{def_abnormalHam}
H^0(\varphi,P,X) = P(X\circ\varphi)=P(\d R_\varphi . X) .
\end{equation*}
\end{definition}
Since $X$ is of class $H^{s+k}$, it follows that $H^0$ is of class $\mathcal{C}^k$.
We have $\partial_XH^{0}(\varphi,P,X)=(\d R_\varphi)^*.P$, where the partial derivative $\partial_XH^{0}: T^*\mathcal{D}^s(M) \times \mathcal{H}_e\rightarrow \mathcal{H}_e^*$ is understood as a partial derivative along the fibers of a vector bundle.
The symplectic gradient $\nabla^\omega H^0:T^*\mathcal{D}^s(M)\times \mathcal{H}_e\rightarrow  TT^*\mathcal{D}^s(M)$ of $H^0$, defined by the relation $\d H^0=\omega(\nabla^\omega H^0,\cdot)$, is given in canonical coordinates $(\varphi,P)$ on $T^*\mathcal{D}^s(M)$ by $\nabla^\omega H^0(\varphi,P,X)=(X\circ\varphi,-(\partial_\varphi(X\circ\varphi))^*.P)$.

\begin{theorem}\label{thm_abnormal}
We assume that $k\geq 2$. 
Let $\varphi(\cdot)\in H^1(0,1;\mathcal{D}^s(M))$ be an horizontal curve with logarithmic velocity $X(\cdot)=\dot{\varphi}(\cdot)\circ\varphi(\cdot)^{-1}$. Then $\varphi(\cdot)$ is a singular curve if and only if there exists a mapping $P(\cdot)$ on $[0,1]$, {of class $H^1$ in time}, such that $P(t)\in T^*_{\varphi(t)}\mathcal{D}^s(M)\setminus\{0\}$ and
\begin{align}
& (\dot{\varphi}(t),\dot{P}(t)) = \nabla^\omega H^0(\varphi(t),P(t),X(t)) ,  \label{abgeod1}   \\
& \partial_XH^0(\varphi(t),P(t),X(t))=(\d R_{\varphi(t)})^*.P(t)=0,  \label{abgeod2}
\end{align}
for almost every $t\in[0,1]$.
\end{theorem}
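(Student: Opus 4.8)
The plan is to compute the differential and the adjoint of the end-point mapping explicitly, and then to read off equations \eqref{abgeod1}--\eqref{abgeod2} directly from the singularity criterion of Remark \ref{rem_sing2}, namely that $\varphi(\cdot)$ is singular if and only if there exists $P_{\varphi_1}\in T^*_{\varphi_1}\mathcal{D}^s(M)\setminus\{0\}$ with $(\d\,\mathrm{end}_{\varphi_0}(\varphi(\cdot)))^*.P_{\varphi_1}=0$. Since $\Phi_{\varphi_0}$ is a $\mathcal{C}^k$ diffeomorphism (Lemma \ref{horcur}), I would first replace $\mathrm{end}_{\varphi_0}$ by $E_{\varphi_0}=\mathrm{end}_{\varphi_0}\circ\Phi_{\varphi_0}$ and take the logarithmic velocity $X(\cdot)\in L^2(0,1;\mathcal{H}_e)$ as the variable; composing with the isomorphism $\d\Phi_{\varphi_0}$ leaves the range unchanged, hence preserves the notion of singularity.

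First I would differentiate the flow. Perturbing $X(\cdot)$ by $\delta X(\cdot)\in L^2(0,1;\mathcal{H}_e)$, the variation $\delta\varphi(t)=\partial_\varepsilon\varphi^{X+\varepsilon\delta X}(t)_{\vert\varepsilon=0}$ solves the linearized Cauchy problem $\delta\dot\varphi(t)=\partial_\varphi(X(t)\circ\varphi(t)).\delta\varphi(t)+\delta X(t)\circ\varphi(t)$, with $\delta\varphi(0)=0$. Writing $R(t,u)$ for the state-transition operator of the homogeneous equation $\dot\xi=\partial_\varphi(X(t)\circ\varphi(t)).\xi$, a direct computation identifies $R(t,u)$ with the pushforward $\d(\varphi(t)\circ\varphi(u)^{-1})$, and Duhamel's formula gives
$$\d E_{\varphi_0}(X(\cdot)).\delta X(\cdot)=\int_0^1 \d(\varphi(1)\circ\varphi(t)^{-1}).\bigl(\delta X(t)\circ\varphi(t)\bigr)\,\d t.$$
Next I would take the adjoint: setting $P(t)=R(1,t)^*.P_{\varphi_1}=\bigl(\d(\varphi(1)\circ\varphi(t)^{-1})\bigr)^*.P_{\varphi_1}\in T^*_{\varphi(t)}\mathcal{D}^s(M)$ and using $\delta X(t)\circ\varphi(t)=\d R_{\varphi(t)}.\delta X(t)$, the pairing becomes
$$P_{\varphi_1}\Bigl(\d E_{\varphi_0}(X(\cdot)).\delta X(\cdot)\Bigr)=\int_0^1 \bigl((\d R_{\varphi(t)})^*.P(t)\bigr)\bigl(\delta X(t)\bigr)\,\d t,$$
so that $(\d\,\mathrm{end}_{\varphi_0}(\varphi(\cdot)))^*.P_{\varphi_1}$ is, as an element of $L^2(0,1;\mathcal{H}_e^*)$, the map $t\mapsto(\d R_{\varphi(t)})^*.P(t)=\partial_XH^0(\varphi(t),P(t),X(t))$. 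Differentiating $P(t)=R(1,t)^*.P_{\varphi_1}$ and using $\partial_t R(1,t)=-R(1,t)\,\partial_\varphi(X(t)\circ\varphi(t))$ yields the costate equation $\dot P(t)=-(\partial_\varphi(X(t)\circ\varphi(t)))^*.P(t)$ with $P(1)=P_{\varphi_1}$; combined with the horizontality relation $\dot\varphi=X\circ\varphi$, this is exactly $(\dot\varphi,\dot P)=\nabla^\omega H^0(\varphi,P,X)$, i.e.\ \eqref{abgeod1}. Since $X(\cdot)\in L^2$ in time and the Jacobian operator acts boundedly on $T_{\varphi(t)}\mathcal{D}^s(M)$, the right-hand side of the costate equation lies in $L^2$, whence $P(\cdot)$ is of class $H^1$ in time.

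To conclude, I would invoke Remark \ref{rem_sing2}: $\varphi(\cdot)$ is singular if and only if there is $P_{\varphi_1}\neq0$ with $(\d\,\mathrm{end}_{\varphi_0}(\varphi(\cdot)))^*.P_{\varphi_1}=0$, which by the adjoint computation means precisely that the associated costate satisfies $(\d R_{\varphi(t)})^*.P(t)=0$ for almost every $t$, i.e.\ \eqref{abgeod2}. As $R(1,t)$ is an isomorphism, its adjoint is injective, so $P(t)=R(1,t)^*.P_{\varphi_1}\neq0$ for every $t$ whenever $P_{\varphi_1}\neq0$; this supplies the required $P(t)\in T^*_{\varphi(t)}\mathcal{D}^s(M)\setminus\{0\}$ in both directions of the equivalence. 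The main obstacle is the functional-analytic bookkeeping across Sobolev scales: the range of $\d\,\mathrm{end}_{\varphi_0}(\varphi(\cdot))$ lands in the proper subspace $\Gamma^{s+1}(TM)\circ\varphi_1$ of $T_{\varphi_1}\mathcal{D}^s(M)$ (Remark \ref{rem_sing}), so one must justify the Duhamel and integration-by-parts steps in the merely $\mathcal{C}^k$ category, verify that the backward-transported covector $P(t)$ genuinely stays in the order $-s$ dual $T^*_{\varphi(t)}\mathcal{D}^s(M)$ rather than drifting to a weaker space, and control the boundedness of $R(1,t)$ and its adjoint on the relevant spaces. This is exactly where the hypothesis $k\geq2$ enters, guaranteeing that $\nabla^\omega H^0$ is $\mathcal{C}^{k-1}$ and that the Jacobian of $X(t)$ acts boundedly, so that the transition operator and its dual are well defined.
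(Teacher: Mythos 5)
Your proposal is correct and follows essentially the same route as the paper: the paper's Lemma \ref{adjend} computes $(\d\,\mathrm{end}_{\varphi_0}(\varphi(\cdot)))^*.P_{\varphi_1}$ as $t\mapsto(\d R_{\varphi(t)})^*.P(t)$ with $P(\cdot)$ the $H^1$ solution of the backward linear costate equation $\dot P=-(\partial_\varphi(X\circ\varphi))^*.P$, $P(1)=P_{\varphi_1}$, and then concludes via the criterion of Remark \ref{rem_sing2}, exactly as you do. The only cosmetic difference is that the paper obtains the adjoint identity by a direct integration by parts against the linearized flow equation, whereas you solve the linearized equation by Duhamel's formula and transpose the state-transition operator; these are the same computation, and your observation that $R(1,t)^*$ is an isomorphism (so $P(t)\neq 0$ everywhere iff $P_{\varphi_1}\neq 0$) is the same fact the paper uses implicitly.
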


\begin{definition}
In the conditions of Theorem \ref{thm_abnormal}, the couple $(\varphi(\cdot),P(\cdot))$ is said to be an \emph{abnormal lift} of the singular curve $\varphi(\cdot)$, and $P(\cdot)$ is said to be a \emph{singular covector}.
\end{definition}

%Note that, if $(\varphi(\cdot),P(\cdot))$ is an abnormal extremal lift of $\varphi(\cdot)$, then $(\varphi(\cdot),\lambda P(\cdot))$ is also an abnormal extremal lift, for any $\lambda\in\R\setminus\{0\}$.
%
%Note also that a singular curve may have several independent abnormal extremal lifts. The number of such independent lifts is usually called the \emph{corank} of the singular curve $\varphi(\cdot)$. It is equal to the codimension of the singularity of the end-point mapping.
%
%Finally, as said before, it may happen that a normal geodesic $\varphi(\cdot)$ have several normal extremal lifts, and in that case it must be singular. Then, from Theorem \ref{thm_abnormal}, it admits also an abnormal extremal lift. Such a singular curve is said to be \emph{non-strict}.

%\begin{remark}\label{constsup}
%Since $\partial_t P(t,x)=-dX(\varphi(t,x))^*.P(t,x)$ for every $x\in M$, the support of $P$ does not depend on $t$, as well as its order as a distributional one-form. In particular, if $P(0)$ is a sum of Dirac masses, $P(0)=p_1(0)\otimes\delta_{x_1}+\dots+p_n(0)\otimes\delta_{x_n}$,
%for some $x_i\in M$ and $p_i\in T^*_{x_i}M$, $i=1,\dots,n$, then $P(t)=p_1(t)\otimes\delta_{x_1}+\dots+p_n(t)\otimes\delta_{x_n}$ for every $t$, for some mappings $t\mapsto p_i(t)\in T_{\varphi(t,x_i)}^*M$.
%The corresponding momentum $\mu(t)=\mu(\varphi(t),P(t))$ is given by
%$$
%\mu(t)=p_1(t)\otimes\delta_{\varphi(t,x_1)}+\dots+p_n(t)\otimes
%\delta_{\varphi(t,x_n)}.
%$$
%\end{remark}

\subsection{Necessary conditions for optimality}
The following result is an extension of the usual Pontryagin maximum principle (see \cite{P}) to our specific infinite-dimensional setting.

\begin{theorem}\label{thm_pmp}
We assume that $k\geq 1$. 
Let $\varphi(\cdot)\in H^1(0,1;\mathcal{D}^s(M))$ be a minimizing horizontal curve with logarithmic velocity $X(\cdot)=\dot{\varphi}(\cdot)\circ\varphi(\cdot)^{-1}$. Then $\varphi(\cdot)$ is a geodesic, and:
\begin{itemize}
\item either $\varphi(\cdot)$ is a normal geodesic, and in that case, it is the projection onto $\mathcal{D}^s(M)$ of a normal extremal lift $(\varphi(\cdot),P(\cdot))$ on $[0,1]$ (satisfying \eqref{eq_geod} on $[0,1]$);
\item or $\varphi(\cdot)$ is a singular curve, and in that case, it is the projection onto $\mathcal{D}^s(M)$ of an abnormal extremal lift $(\varphi(\cdot),P(\cdot))$ on $[0,1]$ (satisfying \eqref{abgeod1}-\eqref{abgeod2} almost everywhere on $[0,1]$);
\item or $\varphi(\cdot)$ is elusive.
\end{itemize}
\end{theorem}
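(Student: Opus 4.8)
The plan is to reduce the statement to the Lagrange-multiplier trichotomy already set up in the preliminary discussion, and then to identify each surviving alternative with the appropriate Hamiltonian equations. First I would record that a minimizing curve is automatically a geodesic (as noted after Definition \ref{def_geod}), so its logarithmic velocity $X(\cdot)$ is a critical point of the action restricted to $\Omega_{\varphi_0,\varphi_1}$. Since $\d\Phi_{\varphi_0}:L^2(0,1;\mathcal{H}_e)\to T_{\varphi(\cdot)}\Omega_{\varphi_0}$ is an isomorphism (Lemma \ref{horcur}), this is equivalent to $X(\cdot)$ being a critical point of the $\mathcal{C}^k$ map $F_{\varphi_0}=(E_{\varphi_0},A\circ\Phi_{\varphi_0})$, i.e. $\d F_{\varphi_0}(X(\cdot))$ fails to be surjective onto $T_{\varphi_1}\mathcal{D}^s(M)\times\R$.

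Next I would run the functional-analytic dichotomy on the range. Either $\mathrm{Range}(\d F_{\varphi_0}(X(\cdot)))$ is dense, in which case $\ker((\d F_{\varphi_0}(X(\cdot)))^*)=\{0\}$ because a continuous functional vanishing on a dense subspace is zero, and $\varphi(\cdot)$ is elusive by definition; or the range is not dense, in which case Hahn--Banach produces a nonzero $(P_{\varphi_1},p^0)\in T^*_{\varphi_1}\mathcal{D}^s(M)\times\R$ annihilating it, which is exactly the Lagrange-multiplier relation \eqref{Lag_mult}. In the latter situation I would split on $p^0$: if $p^0=0$ then $(\d\,\mathrm{end}_{\varphi_0}(\varphi(\cdot)))^*.P_{\varphi_1}=0$ with $P_{\varphi_1}\neq0$, so $\varphi(\cdot)$ is singular by Definition \ref{def_singular} and Theorem \ref{thm_abnormal} furnishes the abnormal extremal lift satisfying \eqref{abgeod1}--\eqref{abgeod2}; if $p^0\neq0$ I normalize to $p^0=-1$ and obtain $\d A(\varphi(\cdot))=(\d\,\mathrm{end}_{\varphi_0}(\varphi(\cdot)))^*.P_{\varphi_1}$.

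The heart of the argument is then the normal case, where I must turn this identity into the Hamiltonian system \eqref{eq_geod}. Using the explicit formula for the differential of the end-point mapping (the right-trivialized linearization of $\dot\varphi=X\circ\varphi$) and its adjoint, I would define $P(\cdot)$ on $[0,1]$ by transporting $P(1):=P_{\varphi_1}$ backward along the adjoint linearized flow, so that $P(t)\in T^*_{\varphi(t)}\mathcal{D}^s(M)$ and $\dot P(t)=-(\partial_\varphi(X(t)\circ\varphi(t)))^*.P(t)$; this is the costate half of \eqref{eq_geod}. Testing $\d A(\varphi(\cdot))=(\d\,\mathrm{end}_{\varphi_0}(\varphi(\cdot)))^*.P_{\varphi_1}$ against arbitrary variations $\delta X(\cdot)$ and using $\d A(\varphi(\cdot)).\delta X=\int_0^1\langle X(t),\delta X(t)\rangle\,\d t$ yields, after applying the sub-musical isomorphism $K_{\mathcal{H}_e}$, the pointwise relation $X(t)=K_{\mathcal{H}_e}(\d R_{\varphi(t)})^*.P(t)$, which is the state half $\dot\varphi=K_{\mathcal{H}_\varphi}P$ of \eqref{eq_geod}. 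Thus $(\varphi(\cdot),P(\cdot))$ is a normal extremal lift.

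I expect the main obstacle to be this last identification, for two reasons. First, one must compute the adjoint of $\d\,\mathrm{end}_{\varphi_0}$ explicitly and verify that the backward-transported $P(\cdot)$ lands in the correct cotangent spaces $T^*_{\varphi(t)}\mathcal{D}^s(M)$ and is genuinely $H^1$ in time, rather than a merely distributional object; this is where the regularity gained from $k\geq1$ is consumed. Second, I must handle the trichotomy with care: in infinite dimension ``not surjective'' does not force positive codimension, and the genuinely new phenomenon is precisely that the dense-range alternative can occur and is irreducible to a multiplier, which is exactly why the elusive case must appear in the conclusion and cannot be excluded by any a priori argument.
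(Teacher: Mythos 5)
Your proposal is correct and follows essentially the same route as the paper: the Lagrange-multiplier trichotomy from the preliminary discussion (dense range $\Rightarrow$ elusive; otherwise a nonzero multiplier $(P_{\varphi_1},p^0)$, split on $p^0$), combined with the backward adjoint transport of $P_{\varphi_1}$ that the paper isolates as Lemma \ref{adjend}, and the identification $X(t)=K_{\mathcal{H}_e}(\d R_{\varphi(t)})^*.P(t)$ obtained by solving $\partial_X H=0$. The only cosmetic difference is that the paper packages the final step through the abnormal and total Hamiltonians $H^0$ and $H$, whereas you state the costate equation and the sub-musical inversion directly.
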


\begin{remark}
In finite dimension, it has been established in \cite{RT} and in \cite{Ag} that the set of end-points of normal geodesics is an open dense subset of the ambient manifold. Although such a result is not established in our infinite-dimensional context (it is all the more difficult than one has also to deal with elusive curves), it is however expected that, in some appropriate sense, the "generic" case of the above theorem is the first one (normal geodesics). 
%Ouvert n'est jamais vrai si on veut une paramŽtrisation au moins C1 car l'orbite de e est incluse dans D^{s+1}(M) qui est d'intŽrieur vide dans D^s(M). Donc pour tre au moins ouvert, il faut absolument que prendre un espace de champs de vecteurs de classe H^s, et pas H^{s+1}, ce qui assure seulement la continuitŽ. Si on prend par exemple les champs horizontaux pour une distribution lisse \Delta sur M, on peut peut-tre faire quelque chose en combinant tout a avec le rŽsultat d'Agrachev sur la contr™labilitŽ des diffŽos...
%Pour voir que c'est dense, il faudrait utiliser le thŽorme qu'on devrait peut-tre mentionner et qui donne des rŽsultats de contr™labilitŽ approchŽe quand l'algbre de Lie engendrŽ par les champs de vecteurs horizontaux est dense (il y a dŽjˆ des rŽsultat connus dans les cas Banach/Hilbert, sinon l'article de Markina que tu m'avais donnŽ ˆ reviewer marche aussi).

Besides, it has been established in \cite{ChitourJeanTrelatJDG,ChitourJeanTrelatSICON} that, in finite-dimensional sub-Riemannian geometry, for generic (in a strong Whitney sense) horizontal distributions of rank greater than or equal to three, any singular curve cannot be minimizing. 
%In these references, it is also proved that, in a generic context, every singular curve is strict.
Although such a result seems currently out of reach in our infinite-dimensional setting, we do expect that, since our distribution is infinite-dimensional, there is no minimizing singular curve for generic distributions. 
%We also expect that the strict property is generic.
\end{remark}

\subsection{Momentum formulation: sub-Riemannian Euler-Arnol'd equation}\label{momentum}
We define the \emph{momentum map} $\mu:T^*\mathcal{D}^s(M)\rightarrow\Gamma^{-s}(T^*M)$ by $\mu(\varphi,P)=(\d R_\varphi)^*.P$.

%\begin{proposition}\label{prop_momentum}
%We assume that $k\geq1$. Let \red{$\varphi(\cdot)\in \mathcal{C}^1(0,1;\mathcal{D}^s(M))$} be either a normal geodesic or a singular curve, with logarithmic velocity $X(\cdot)\in L^2(0,1;\mathcal{H}_e)$, and let $(\varphi(\cdot),P(\cdot))$ be an extremal lift (either normal or abnormal) of $\varphi(\cdot)$.
%
%We denote by $\mu(t)=\mu(\varphi(t),P(t))$ the corresponding momentum along the extremal. Then, for every time $t$, $\mu(\cdot)$ admits a derivative in $\Gamma^{-s-1}(T^*M)$ given by
%\begin{equation}\label{EulerArnol'd}
%\dot{\mu}(t)=\mathrm{ad}^*_{X(t)}\mu(t)=-\mathcal{L}_{X(t)}\mu(t).
%\end{equation}
%Here, $\mathrm{ad}_X:\Gamma^{s+1}(TM)\rightarrow\Gamma^{s}(TM)$, with $\mathrm{ad}_XY=[X,Y]$, and $\mathcal{L}_X$ the Lie derivative with respect to $X$. As a consequence, we have
%$$
%\mu(t)=\varphi(t)_*\mu(0),
%$$
%for every $t\in[0,1]$.
%\end{proposition}

\begin{proposition}\label{prop_momentum}
We assume that $k\geq1$. Let $\varphi(\cdot)\in H^1(0,1;\mathcal{D}^s(M))$ be either a normal geodesic or a singular curve, with logarithmic velocity $X(\cdot)\in L^2(0,1;\mathcal{H}_e)$, and let $(\varphi(\cdot),P(\cdot))$ be an extremal lift (either normal or abnormal) of $\varphi(\cdot)$. We denote by $\mu(t)=\mu(\varphi(t),P(t))$ the corresponding momentum along the extremal, which is continuous in time.

Then the curve $\mu(\cdot)$ has Sobolev class $H^1$ in the coarser space $\Gamma^{-s-1}(T^*M)$, with derivative given almost everywhere by 
\begin{equation}\label{EulerArnol'd}
\dot{\mu}(t)=\mathrm{ad}^*_{X(t)}\mu(t)=-\mathcal{L}_{X(t)}\mu(t),
\end{equation}
for almost every $t\in[0,1]$.
Here, $\mathrm{ad}_X:\Gamma^{s+1}(TM)\rightarrow\Gamma^{s}(TM)$, with $\mathrm{ad}_XY=[X,Y]$, and $\mathcal{L}_X$ the Lie derivative with respect to $X$. As a consequence, we have
 $$
\mu(t)=\varphi(t)_*\mu(0),
$$
for every $t\in[0,1]$.
\end{proposition}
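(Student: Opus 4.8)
The plan is to handle the normal and the abnormal cases simultaneously, exploiting that in both situations the extremal $(\varphi(\cdot),P(\cdot))$ solves a Hamiltonian system whose $P$-equation has the same canonical-coordinate form $\dot P(t)=-(\partial_\varphi(X(t)\circ\varphi(t)))^{*}P(t)$ (with $\dot\varphi(t)=X(t)\circ\varphi(t)$), and that both the normal Hamiltonian \eqref{def_normalHam} and the abnormal Hamiltonian $H^0$ depend on $(\varphi,P)$ only through the momentum $\mu(\varphi,P)=(\d R_\varphi)^{*}P$ — this collective dependence is exactly the manifestation of right-invariance and is what makes the reduced equation \eqref{EulerArnol'd} available. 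First I would record the defining pairing $\mu(t)(Y)=P(t)(Y\circ\varphi(t))$ for a fixed test field $Y\in\Gamma^{s+1}(TM)$, and observe that since $k\geq 1$ we have $X(t)\in\mathcal{H}_e\subset\Gamma^{s+1}(TM)$, so that $[X(t),Y]\in\Gamma^{s}(TM)$ is well defined; to identify $\dot\mu(t)$ as an element of $\Gamma^{-s-1}(T^{*}M)$ it then suffices to compute $\frac{\d}{\d t}\mu(t)(Y)$ for all such $Y$.

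Next I would differentiate $t\mapsto P(t)(Y\circ\varphi(t))$. The term coming from $\partial_t(Y\circ\varphi(t))=(\d Y\,X(t))\circ\varphi(t)$ (chain rule, using $\dot\varphi=X\circ\varphi$) contributes $\mu(t)(\d Y\,X(t))$, where $\d Y\,X$ denotes the field $x\mapsto \d Y_x(X(x))$. The term $\dot P(t)(Y\circ\varphi(t))$ I would evaluate from the $P$-equation in \eqref{eq_geod} (normal case) or \eqref{abgeod1} (abnormal case); reinterpreted through $\mu(t)$ it yields $-\mu(t)(\d X(t)\,Y)$. In the normal case $X(t)=K_{\mathcal{H}_e}(\d R_{\varphi(t)})^{*}P(t)=K_{\mathcal{H}_e}\mu(t)$ itself depends on $\varphi$, producing an a priori extra contribution; however, because $h$ depends on $(\varphi,P)$ only through $\mu$ and $K_{\mathcal{H}_e}$ is self-adjoint, this extra contribution collapses to the very same expression $-\mu(t)(\d X(t)\,Y)$, so the two cases coincide. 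Adding the two terms and using $\d Y\,X-\d X\,Y=[X,Y]$ gives
$$
\frac{\d}{\d t}\mu(t)(Y)=\mu(t)([X(t),Y])=(\mathrm{ad}^{*}_{X(t)}\mu(t))(Y),
$$
which is \eqref{EulerArnol'd}; the identification $\mathrm{ad}^{*}_{X}\mu=-\mathcal{L}_{X}\mu$ for co-currents then follows from the dual definition of the Lie derivative, namely $\langle\mathcal{L}_{X}\mu,Y\rangle=-\langle\mu,[X,Y]\rangle$.

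For the regularity claim I would note that $\mathrm{ad}_{X(t)}:\Gamma^{s+1}(TM)\to\Gamma^{s}(TM)$ is bounded, with norm controlled by $|X(t)|_{s+k}$, so its transpose maps $\Gamma^{-s}(T^{*}M)$ continuously into $\Gamma^{-s-1}(T^{*}M)$. Since $\mu(\cdot)$ is continuous, hence bounded, in $\Gamma^{-s}(T^{*}M)$ and $X(\cdot)\in L^{2}(0,1;\mathcal{H}_e)$, the right-hand side $\mathrm{ad}^{*}_{X(t)}\mu(t)$ lies in $L^{2}(0,1;\Gamma^{-s-1}(T^{*}M))$; being the distributional time-derivative of $\mu(\cdot)$, this yields $\mu(\cdot)\in H^{1}(0,1;\Gamma^{-s-1}(T^{*}M))$ with derivative \eqref{EulerArnol'd}. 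To conclude $\mu(t)=\varphi(t)_{*}\mu(0)$, I would invoke the transport identity $\frac{\d}{\d t}\varphi(t)^{*}\alpha=\varphi(t)^{*}(\mathcal{L}_{X(t)}\alpha)$ for the time-dependent flow $\dot\varphi=X\circ\varphi$: if $\mu(t)$ solves $\dot\mu=-\mathcal{L}_{X(t)}\mu$ then $\frac{\d}{\d t}\big(\varphi(t)^{*}\mu(t)\big)=\varphi(t)^{*}\big(\mathcal{L}_{X(t)}\mu(t)+\dot\mu(t)\big)=0$, so $\varphi(t)^{*}\mu(t)=\mu(0)$, i.e. $\mu(t)=\varphi(t)_{*}\mu(0)$; uniqueness for the linear non-autonomous ODE in $\Gamma^{-s-1}(T^{*}M)$ makes this rigorous.

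I expect the main obstacle to be the rigorous justification of the two formal manipulations in the distributional/infinite-dimensional setting: legitimizing the time-differentiation of $t\mapsto P(t)(Y\circ\varphi(t))$ when $P(\cdot)$ is only $H^{1}$ in time and a co-current in space, and above all verifying that in the normal case the $\varphi$-derivative of $X(\varphi,P)$ does not leave a spurious term — this is the one point where right-invariance (self-adjointness of $K_{\mathcal{H}_e}$ together with the collective dependence of $h$ on $\mu$) must be used with care, since a naive split of $\partial_\varphi(X(\varphi,P)\circ\varphi)$ appears to double-count. The transport and uniqueness step is comparatively routine, but it does require that the pushforward of an order-$s$ co-current by an $H^{s}$ diffeomorphism be well defined, which I would address through the continuity of $\mathrm{ad}^{*}$ established above and a density argument on test fields.
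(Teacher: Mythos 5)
Your proof is correct and follows essentially the same route as the paper's: differentiate $t\mapsto P(t)(Y\circ\varphi(t))$ against test fields $Y\in\Gamma^{s+1}(TM)$, substitute the covector equation $\dot P(t)=-(\partial_\varphi(X(t)\circ\varphi(t)))^*.P(t)$ common to both the normal and abnormal cases, recognize the Lie bracket, estimate $\mathrm{ad}^*_{X(t)}\mu(t)$ in $L^2(0,1;\Gamma^{-s-1}(T^*M))$ by continuity of the bracket $(\Gamma^{s+1}(TM))^2\rightarrow\Gamma^{s}(TM)$, and deduce the transport identity $\mu(t)=\varphi(t)_*\mu(0)$ by density of $\Gamma^{s+1}(TM)$. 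Your explicit verification that the $\varphi$-dependence of $X(\varphi,P)$ in the normal case contributes no spurious term (via self-adjointness of $K_{\mathcal{H}_e}$, equivalently the stationarity condition $\partial_XH=0$) is the one subtlety the paper leaves implicit, and you resolve it correctly.
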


\begin{proof} 
Let $Y\in \Gamma^{s+1}(TM)\subset\Gamma^s(TM)$ and let $t\in[0,1]$. Then, in canonical coordinates, we have
$$
\mu(t)(Y)=P(t)(Y\circ\varphi(t))
=P(0)(Y)+\int_0^t \left( P(\tau)(\d Y\circ\varphi(\tau).X\circ\varphi(\tau))+\dot{P}(\tau)(Y\circ\varphi(\tau)) \right) \d \tau .
$$ 
Since the derivative of the covector is given, in both normal and singular cases, by
$$\dot{P}(\tau)(Y\circ\varphi(\tau))=-P(\tau)(\partial_\varphi(X(\tau)\circ\varphi(\tau)).Y\circ\varphi(\tau)) 
= -P(\tau)((\d X(\tau).Y)\circ\varphi(\tau)),$$ 
for every $\tau\in[0,t]$, we infer that
$$
\begin{aligned}
{\mu}(t)(Y)&=P(0)(Y)+\int_0^t \left( P((dY.X(\tau)-dX(\tau).Y)\circ\varphi) \right) \d \tau \\
&=P(0)(Y)+\int_0^t P(\tau)([X(\tau),Y]\circ\varphi(\tau)) \, \d \tau\\
&=P(0)(Y)+\int_0^t\mu(\tau)([X(\tau),Y]) \, \d \tau\\
&=P(0)(Y)+\int_0^t\mathrm{ad}_{X(\tau)}^*\mu(\tau)(Y) \, \d \tau .
\end{aligned}
$$
Note that $t\mapsto\mathrm{ad}_{X(t)}^*\mu(t)$ belongs to $L^2(0,1;\Gamma^{-s-1}(T^*M))$. Indeed, the Lie bracket of vector fields yields a continuous bilinear mapping $(\Gamma^{s+1}(TM))^2\rightarrow\Gamma^s(TM)$, and thus 
$$
\vert \mathrm{ad}_{X(t)}^*\mu(t)(Y)\vert^2\leq C \left( \max_{t\in [0,1]}\vert \mu(t)\vert_{-s}\right)^2\langle X\rangle\vert Y\vert_{s+1}^2.
$$
Here, the notation $\vert\cdot\vert_{-s}$ stands for the usual operator norm on the dual space $\Gamma^{-s}(T^*M)=(\Gamma^s(TM))^*$.

Since $k\geq1$ and $\varphi(\cdot)$ is horizontal, we have $\varphi(\tau)\in\mathcal{D}^{s+1}(M)$ for every $\tau\in[0,t]$, hence, from the above equation, we easily infer that $\mu(t)=\varphi(t)_*\mu(0)$ on $\Gamma^{s+1}(TM)$, which is dense in $\Gamma^{s}(TM)$. The proposition follows.
%In other words, $t\mapsto\mu(t)$ has a derivative $\dot{\mu}(t)=\mathrm{ad}_{K_{\mathcal{H}_e}\mu(t)}^*\mu(t)$ in the bigger space $\Gamma^{-s-1}(T^*M)$.
%Conversely, if a map $t\mapsto\mu(t)\in \Gamma^{-s}(TM)$ satisfies this equation, then the horizontal curve $\varphi(\cdot)$ with logarithmic velocity $X(\cdot)=K_{\mathcal{H}_e}\mu(\cdot)$ such that $\varphi(0)=e$ is a normal geodesic with initial momentum $\mu(0)$. 
\end{proof}

\begin{remark}
Since $k\geq1$, any normal geodesic $\varphi(\cdot)$ is of class $\mathcal{C}^1$. Moreover, $X(t)=K_{\mathcal{H}_e}\mu(t)$ and we recover the classical formula for critical points of the action on Lie groups (see \cite{HMR,MRBOOK})
$$
\dot{\mu}(t)=\mathrm{ad}^*_{K_{\mathcal{H}_e}\mu(t)}\mu(t).
$$
The differential equation \eqref{EulerArnol'd} is the generalization of the famous Euler-Arnol'd equation to our sub-Riemannian setting.

It can be noted that, in the Riemannian case, and for smooth vector fields, we have $K_{\mathcal{H}_e} \mathrm{ad}^*_{X} = \mathrm{ad}^T_{X} K_{\mathcal{H}_e}$, and it follows that the above equation is equivalent to
\begin{equation}\label{EPDiff}
\dot{X}(t)=\mathrm{ad}^T_{X(t)}X(t),
\end{equation}
where $\mathrm{ad}^T_{X}$ is the transpose of the operator $\mathrm{ad}_X$ with respect to the Hilbert product $\langle\cdot,\cdot\rangle$.
As is well known, we then obtain the Euler equation for the weak $L^2$ metric (see \cite{A,EM}) on vector fields with divergence zero, and with other metrics we obtain other equations, such as KdV, Camassa-Holm (see the survey paper \cite{BBM}). Let us note that, if $M=\R^d$, then \eqref{EulerArnol'd} is equivalent to
$$
\partial_t\mu(t) = -(X(t).\nabla)\mu(t) - (\mathrm{div}(X(t))\mu(t) - (\d X(t))^*\mu(t),
$$
which has the same form as that of the usual EPDiff equation.
In some sense, the differential equation \eqref{EPDiff} is the version on the tangent space of the differential equation \eqref{EulerArnol'd}, which lives on the cotangent space.

In the sub-Riemannian framework of the present paper, we \textit{cannot} write the differential equation \eqref{EPDiff} on the tangent space, because $\mathcal{H}_e$ is usually not a subspace of $\Gamma^s(TM)$ that is invariant under $\mathrm{ad}_X$ (and then, we do not have $K_{\mathcal{H}_e} \mathrm{ad}^*_{X} = \mathrm{ad}^T_{X} K_{\mathcal{H}_e}$). This same restriction already appears in the finite-dimensional setting where the geodesic equations can only be written in the cotangent space, but not in the tangent space.
%, due to the fact that the Legendre transformation is not well defined in the sub-Riemannian case. The reason is that the metric is well defined on the horizontal distribution, but is infinite outside, whereas the cometric is well defined on the whole cotangent space.
\end{remark}

Because of the loss of a derivative, it is harder to prove the existence of solutions of the differential equation $\dot{\mu}=\mathrm{ad}^*_{K_{\mathcal{H}_e}\mu}\mu$ without using Theorem \ref{normalgeod}. On the other hand, once the existence and uniqueness of the geodesic flow is ensured, the momentum formulation can be used to find various  quantities conserved by the geodesic flow. For example, we have the following result.

\begin{corollary}\label{consord}
{In the context of Proposition \ref{prop_momentum}}, the support of $P(\cdot)$, and its order of regularity\footnote{Here, we refer to the regularity of the coefficients that appear in the  (distributional-valued) $1$-form with which $P(0)$ is identified. For example, if $P(0)$ is identified with a $1$-form whose coefficients belong to the space of Radon measures on $M$, then the same holds for $P(t)$, for every time $t$.} up to Sobolev class $H^{s+k-1}$, are preserved on $[0,1]$. % in both the normal case and the singular case.
\end{corollary}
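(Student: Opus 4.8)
The plan is to read off both assertions from the momentum evolution of Proposition~\ref{prop_momentum}. There the momentum satisfies the transport equation $\dot\mu(t)=-\mathcal{L}_{X(t)}\mu(t)$, whose solution is the pushforward of $\mu(0)$ by the flow $\psi(t)=\varphi(t)\circ\varphi(0)^{-1}=\varphi^X_e(t)$ of $X(\cdot)$; since $X(\cdot)\in L^2(0,1;\mathcal{H}_e)\subset L^2(0,1;\Gamma^{s+k}(TM))$, this flow belongs to $\mathcal{D}^{s+k}(M)$, and it is this extra regularity of the transporting map that makes the threshold $H^{s+k-1}$ appear. To pass from $\mu$ to $P$, I would use the correspondence $\mu(t)=(\d R_{\varphi(t)})^*.P(t)$; as $\d R_{\varphi(t)}$ is invertible with inverse $\d R_{\varphi(t)^{-1}}$, this reads $P(t)(Z)=\mu(t)(Z\circ\varphi(t)^{-1})$ for every $Z\in T_{\varphi(t)}\mathcal{D}^s(M)$, so that $P(0)$ and $P(t)$ are both obtained from the single co-current $\mu$ by composition with a diffeomorphism. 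It therefore suffices to prove that the support and the order of regularity of $\mu$ are preserved on $[0,1]$, and then to transfer the conclusion back to $P$.

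For the support, I would use that the pushforward of a distributional $1$-form by a diffeomorphism $\psi$ satisfies $\mathrm{supp}(\psi_*\mu(0))=\psi(\mathrm{supp}(\mu(0)))$. As $\psi(t)$ is a homeomorphism of $M$, it carries $\mathrm{supp}(\mu(0))$ onto $\mathrm{supp}(\mu(t))$ homeomorphically; composing with the fixed diffeomorphisms relating $\mu$ to $P$ relocates the support by a further homeomorphism. Thus the support of $P(t)$ is the homeomorphic image of that of $P(0)$, and in particular its nature---a finite set of points, an embedded submanifold, and so on---is unchanged on $[0,1]$.

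For the order of regularity, I would work in local coordinates, where the coefficient $1$-form of $\psi_*\mu$ is computed from that of $\mu(0)$ by precomposition with $\psi(t)^{-1}$, multiplication by the Jacobian $|\det D\psi(t)^{-1}|$, and the linear action of $D\psi(t)^{-1}$ on the covector index. The statement then rests on a Sobolev composition estimate: for $\psi\in\mathcal{D}^{s+k}(M)$ and $0\le\sigma\le s+k-1$, precomposition preserves $H^\sigma$, while the multiplicative factors lie in $H^{s+k-1}$ and act as multipliers on $H^\sigma$ because $s+k-1>d/2$. This gives that the coefficient of $\mu(t)$---and hence, after composition with $\varphi(t)^{-1}$, that of $P(t)$---has the same Sobolev regularity as the coefficient of $P(0)$, up to the threshold $H^{s+k-1}$.

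I expect the main obstacle to be precisely this regularity bookkeeping, namely verifying that the transport costs no more than one derivative relative to $\psi(t)$, so that an $H^{s+k}$ flow preserves exactly the scale $H^\sigma$ for $\sigma\le s+k-1$. The delicate point is the chain rule: differentiating $u\circ\psi$ brings out a top-order factor $D\psi\in H^{s+k-1}$, which must multiply an object of regularity at most $H^{s+k-1}$ for the product to stay in $H^{s+k-1}$; this uses the algebra property of $H^{s+k-1}$, valid since $s+k-1>d/2$, and the same multiplier estimates control the Jacobian factor. Once these composition and multiplication bounds are established, both conclusions follow at once, uniformly in $t$ thanks to the continuity of $t\mapsto\psi(t)\in\mathcal{D}^{s+k}(M)$.
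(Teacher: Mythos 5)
Your proposal is correct and follows essentially the same route as the paper: both rest on the pushforward formula $\mu(t)=\varphi(t)_*\mu(0)$ from Proposition~\ref{prop_momentum}, the fact that $\varphi(t)\in\mathcal{D}^{s+k}(M)$, and the transfer back to $P$ via $P=(\d R_{\varphi})_*\mu$. Your explicit Sobolev bookkeeping (the chain rule producing a $D\psi\in H^{s+k-1}$ factor, handled by the algebra property since $s+k-1>d/2$) makes precise where the threshold $H^{s+k-1}$ in the statement comes from, which the paper's one-line proof leaves implicit.
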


\begin{proof}
This is an immediate consequence of the formula $\mu(t)=\varphi(t)_*\mu(0)$, using the fact that $\varphi(t)\in \mathcal{D}^{s+k}(M)$ as a flow of vector fields of class $H^{s+k}$. Hence, the order of regularity of $\mu$ up to Sobolev class $H^{s+k}$ is obviously constant along the curve, and the support of $\mu(t)$ is the image by $\varphi(t)$ of the support of $\mu(0)$. Using that $P=(\d R_\varphi)_*\mu$, the result follows.
\end{proof}

\subsection{Proof of Theorems \ref{normalgeod}, \ref{thm_abnormal} and \ref{thm_pmp}}\label{proofgeod}
Let us first compute the adjoint of the derivative of the end-point mapping.

\begin{lemma}\label{adjend}
Let $\varphi(\cdot)\in \Omega_{\varphi_0}$ be an horizontal curve with logarithmic velocity $X(\cdot)=\dot{\varphi}(\cdot)\circ\varphi(\cdot)^{-1}$. We set $\varphi_1=\varphi(1)=\mathrm{end}_{\varphi_0}(\varphi(\cdot))$. For every $P_{\varphi_1}\in T^*_{\varphi_1}\mathcal{D}^s(M)$, the pull-back $(\d\,\mathrm{end}_{\varphi_0}(\varphi(\cdot)))^*.P_{\varphi_1}$ of $P_{\varphi_1}$ by $\d\, \mathrm{end}_{\varphi_0}(\varphi(\cdot))$ is a continuous linear form on $L^2(0,1;\mathcal{H}_e)$, and can therefore be identified to an element of $L^2(0,1;\mathcal{H}_e)^*=L^2(0,1;\mathcal{H}_e^*)$, given by
\begin{equation}\label{dendetoile}
((\d\,\mathrm{end}_{\varphi_0}(\varphi(\cdot)))^*.P_{\varphi_1})(t)=\partial_XH^0(\varphi(t),P(t),X(t))=(\d R_{\varphi(t)})^*.P(t),
\end{equation}
for almost every $t\in[0,1]$, where $(\varphi(\cdot),P(\cdot)):[0,1]\rightarrow T^*\mathcal{D}^s(M)$ is the unique absolutely continuous mapping solution of $(\dot{\varphi}(t),\dot{P}(t))=\nabla^\omega H^0(\varphi(t),P(t),X(t))$ on $[0,1]$ and $P(1)=P_{\varphi_1}$.
\end{lemma}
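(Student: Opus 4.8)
The plan is to work in the logarithmic-velocity parametrization, in which $\mathrm{end}_{\varphi_0}$ becomes the map $E_{\varphi_0}(X(\cdot)) = \varphi^X(1)$, where $\varphi^X$ is the flow solving $\dot\varphi = X\circ\varphi$, $\varphi(0)=\varphi_0$. First I would compute $\d E_{\varphi_0}(X(\cdot))$ by linearizing this flow equation: for a variation $\delta X(\cdot)\in L^2(0,1;\mathcal{H}_e)$, the induced variation $\delta\varphi(\cdot) = \d\Phi_{\varphi_0}(X).\delta X$ satisfies, in canonical coordinates, the linear inhomogeneous Cauchy problem
$$\dot{\delta\varphi}(t) = \partial_\varphi(X(t)\circ\varphi(t)).\delta\varphi(t) + \delta X(t)\circ\varphi(t), \qquad \delta\varphi(0)=0,$$
which is exactly the linearization $\partial_\varphi C$ already encountered in the proof of Lemma \ref{horcur}. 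The crucial point is that since $X(t)\in\mathcal{H}_e\hookrightarrow\Gamma^{s+k}(TM)$ with $k\geq1$, the coefficient operator $\partial_\varphi(X(t)\circ\varphi(t)) = (\d X(t)\circ\varphi(t))\cdot$ is \emph{multiplication} by a section of class $H^{s+k-1}$, hence a bounded operator on $T\mathcal{D}^s(M)$ with no loss of derivative in $\delta\varphi$. The linear Cauchy-Lipschitz theorem in Banach spaces then produces a bounded resolvent $R(t,\tau)$, and Duhamel's formula gives
$$\d E_{\varphi_0}(X).\delta X = \delta\varphi(1) = \int_0^1 R(1,\tau)\,(\delta X(\tau)\circ\varphi(\tau))\,\d\tau.$$

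Next I would pair with $P_{\varphi_1}\in T^*_{\varphi_1}\mathcal{D}^s(M)$ and transfer the resolvent onto the covector, setting $P(\tau) := R(1,\tau)^*P_{\varphi_1}\in T^*_{\varphi(\tau)}\mathcal{D}^s(M)$:
$$P_{\varphi_1}(\delta\varphi(1)) = \int_0^1 (R(1,\tau)^*P_{\varphi_1})(\delta X(\tau)\circ\varphi(\tau))\,\d\tau = \int_0^1 ((\d R_{\varphi(\tau)})^*P(\tau))(\delta X(\tau))\,\d\tau.$$
Reading the right-hand side as the action on $\delta X$ of an element of $L^2(0,1;\mathcal{H}_e^*)=L^2(0,1;\mathcal{H}_e)^*$ yields at once the continuity of the linear form $(\d\,\mathrm{end}_{\varphi_0}(\varphi(\cdot)))^*.P_{\varphi_1}$ and the pointwise formula $((\d\,\mathrm{end}_{\varphi_0}(\varphi(\cdot)))^*.P_{\varphi_1})(t) = (\d R_{\varphi(t)})^*.P(t) = \partial_XH^0(\varphi(t),P(t),X(t))$ claimed in \eqref{dendetoile}.

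It then remains to identify $P(\cdot)$ as the costate announced in the statement. Writing $A(\tau) = \partial_\varphi(X(\tau)\circ\varphi(\tau))$, the standard resolvent identity $\partial_\tau R(1,\tau) = -R(1,\tau)A(\tau)$ yields, after taking adjoints, $\dot P(\tau) = -A(\tau)^*.P(\tau) = -(\partial_\varphi(X(\tau)\circ\varphi(\tau)))^*.P(\tau)$, together with the terminal condition $P(1)=R(1,1)^*P_{\varphi_1}=P_{\varphi_1}$. This is precisely the $P$-component of $\nabla^\omega H^0$, so that $(\varphi(\cdot),P(\cdot))$ solves $(\dot\varphi,\dot P)=\nabla^\omega H^0(\varphi,P,X)$ on $[0,1]$; absolute continuity of $P(\cdot)$ follows from $A(\cdot)$ being an $L^2$-in-time family of bounded operators (so that $\dot P$ is integrable in time), and uniqueness from Cauchy-Lipschitz.

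The main obstacle I anticipate is not the formal computation but the functional-analytic bookkeeping in infinite dimension: one must confirm that the linearized flow is a genuine Banach-space ODE (the absence of a lost derivative in $\delta\varphi$ is what makes $A(\tau)$ a bounded, zeroth-order multiplication operator, and this rests on $k\geq1$), that the resolvent $R(t,\tau)$ and its adjoint are bounded and measurable in $\tau$ into the correct spaces, and that all duality manipulations remain legitimate when $P(\tau)$ lives in the negative-order dual $T^*_{\varphi(\tau)}\mathcal{D}^s(M)$. Tracking these regularity thresholds, and in particular verifying that $t\mapsto(\d R_{\varphi(t)})^*.P(t)$ is continuous with values in $\mathcal{H}_e^*$ so that it indeed represents an element of $L^2(0,1;\mathcal{H}_e^*)$, is where the real care is needed.
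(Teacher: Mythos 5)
Your proof is correct and follows essentially the same route as the paper's: linearize the flow equation in coordinates, introduce the backward costate equation $\dot P(t) = -(\partial_\varphi(X(t)\circ\varphi(t)))^*.P(t)$ with $P(1)=P_{\varphi_1}$ (well posed by linear Cauchy--Lipschitz since, for $k\geq1$, the coefficient is a bounded multiplication operator), and pair against $\delta X$. The only difference is presentational: you build $P(\tau)=R(1,\tau)^*P_{\varphi_1}$ from the adjoint resolvent and read the duality off Duhamel's formula, whereas the paper defines $P$ by the backward ODE and obtains the identical identity by recognizing $\frac{\d}{\d t}\bigl[P(t)(\delta\varphi(t))\bigr]$ under the integral --- the two are the same adjoint-state computation.
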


\begin{proof}[Proof of Lemma \ref{adjend}.]
In local coordinates, the fibered part of the differential equation of the lemma is $
\dot{P}(t)=-(\partial_\varphi(X(t)\circ\varphi(t)))^*.P(t)$, which is a linear differential equation. The Cauchy-Lipschitz theorem for linear differential equations therefore ensures global existence and uniqueness of a solution $P(\cdot)$ of class $H^1$ (and thus, absolutely continuous) such that $P(1)=P_{\varphi_1}$. Let us now prove the formula \eqref{dendetoile}.
The mapping $X(\cdot)\mapsto \varphi^X(\cdot)$ is of class $\mathcal{C}^k$, with $k\geq1$, and is defined implicitly by the differential equation $\dot{\varphi}^X(t)-X(t)\circ\varphi^X(t)=0$ for almost every $t\in[0,1]$, with $\varphi^X(0)=\varphi_0$.
To compute its derivative $\delta \varphi(\cdot)=\d \varphi^X(X(\cdot)).\delta X(\cdot)$ in the direction $\delta X(\cdot)\in L^2(0,1;\mathcal{H}_e)$, we differentiate this differential equation in local coordinates, and obtain that $\delta\dot\varphi(t)-\delta X(t)\circ\varphi^X(t)-\partial_\varphi(X(t)\circ{\varphi^X(t)}).\delta\varphi(t)=0$ for almost every $t\in[0,1]$, with $\delta \varphi(0)=0$. For every $\delta X(\cdot)\in L^2(0,1;\mathcal{H}_e)$, this Cauchy problem has a unique solution $\delta \varphi(\cdot)$, and we have $\d\,\mathrm{end}_{\varphi_0}(\varphi(\cdot)).\delta X(\cdot)=\delta \varphi(1)$.
Moreover, we have
\begin{equation*}
\begin{aligned}
\int_0^1\partial_XH^{0}(\varphi(t),P(t),X(t)).\delta X(t)\,\d t
&=\int_0^1P(t)(\delta X(t)\circ\varphi(t))\, \d t\\
&=\int_0^1P(t)(\delta\dot\varphi(t))\, \d t - \int_0^1P(t)\left( \partial_\varphi(X(t)\circ{\varphi(t)}).\delta\varphi(t) \right)\d t\\
&=\int_0^1 \left( P(t)(\delta\dot\varphi(t))  + \dot{P}(t)\left(\delta\varphi(t)\right)\right) \d t \\
&=P_{\varphi_1}(\delta \varphi(1)) = (\d\,\mathrm{end}_{\varphi_0}(\varphi(\cdot)))^*.P_{\varphi_1},
\end{aligned}
\end{equation*}
which yields \eqref{dendetoile}.
\end{proof}

Theorem \ref{thm_abnormal} follows from Lemma \ref{adjend} because $\varphi(\cdot)$ is a singular curve if and only if then there exists $P_{\varphi_1}\in T^*_{\varphi_1}\mathcal{D}^s(M)\setminus\{0\}$ such that $(\d\,\mathrm{end}_{\varphi_0}(\varphi(\cdot)))^*.P_{\varphi_1}=0$.

Let $\varphi_0$ and $\varphi_1$ be two elements of $\mathcal{D}^s(M)$, and let $\varphi(\cdot)\in\Omega_{\varphi_0,\varphi_1}$. 
Since $\Omega_{\varphi_0,\varphi_1}=\mathrm{end}_{\varphi_0}^{-1}(\{\varphi_1\})$, we have $T_{\varphi(\cdot)}\Omega_{\varphi_0,\varphi_1}\subset\ker (\d\,\mathrm{end}_{\varphi_0}(\varphi(\cdot)))$ (see Definition \ref{def_geod} for the definition of the set of all infinitesimal variations). Note that, if $\d\,\mathrm{end}_{\varphi_0}(\varphi(\cdot))$ were surjective, then $\Omega_{\varphi_0,\varphi_1}$ would be, locally at $\varphi(\cdot)$, a $\mathcal{C}^k$ submanifold of $\Omega_{\varphi_0}$, and then $T_{\varphi(\cdot)}\Omega_{\varphi_0,\varphi_1} = \ker (\d\,\mathrm{end}_{\varphi_0}(\varphi(\cdot)))$. But, as already said, in our context only the inclusion is true.

Since $T_{\varphi(\cdot)}\Omega_{\varphi_0,\varphi_1}\subset\ker (\d\,\mathrm{end}_{\varphi_0}(\varphi(\cdot)))$, if there is some $P_{\varphi_1}\in T^*_{\varphi_1}\mathcal{D}^s(M)$ such that $\d A(\varphi(\cdot))=(\d\,\mathrm{end}_{\varphi_0}(\varphi(\cdot)))^*.P_{\varphi_1}$,
then $\varphi(\cdot)$ is a critical point of $A$ restricted to $\Omega_{\varphi_0,\varphi_1}$ (and hence $\varphi(\cdot)$ is a geodesic steering $\varphi_0$ to $\varphi_1$). %The momentum $P_{\varphi_1}$ is a Lagrange multiplier. 
Conversely, according to the discussion done at the beginning of Section \ref{sec_geod}, this Lagrange multiplier relation is satisfied whenever $\varphi(\cdot)$ is a geodesic steering $\varphi_0$ to $\varphi_1$ which is neither singular nor elusive.

Besides, the differential $\d A(\varphi(\cdot))\in L^2(0,1;\mathcal{H}_e^*)$ is given by $\d A(\varphi)(t)=\langle X(t),\cdot\rangle$.
It follows from Lemma \ref{adjend} that
$$
\left( \d A(\varphi(\cdot))-\d\, \mathrm{end}_{\varphi_0}(\varphi(\cdot))^*.P_{\varphi_1}\right)(t)=\langle X(t),\cdot\rangle-\partial_XH^0(\varphi(t),P(t),X(t)),
$$
for almost every $t\in[0,1]$, where $P(\cdot):[0,1]\rightarrow T^*\mathcal{D}^s(M)$ is the unique solution of $(\dot{\varphi}(t),\dot{P}(t))=\nabla^\omega H^0(\varphi(t),P(t),X(t))$ on $[0,1]$ and $P(1)=P_{\varphi_1}$.
Defining the total Hamiltonian by
$$
\begin{array}{rcl}
H: T^*\mathcal{D}^s(M)\times \mathcal{H}_e & \rightarrow & \R \\
(\varphi,P,X) & \mapsto & P(X\circ\varphi)-\frac{1}{2}\langle X,X\rangle ,
\end{array}
$$
we have $\nabla^{\omega}H=\nabla^{\omega}H^0$, and
$\left(\d A(\varphi(\cdot))-(\d\, \mathrm{end}_{\varphi_0}(\varphi(\cdot)))^*.P_{\varphi_1}\right)(t)=-\partial_XH(\varphi(t),P(t),X(t))$
for almost every $t\in[0,1]$.
We have obtained the following lemma.

\begin{lemma}
Let $\varphi_0\in \mathcal{D}^s(M)$.
Let $X\in L^2(0,1;\mathcal{H}_e)$ be the logarithmic velocity of an horizontal curve $\varphi(\cdot)\in H^1(0,1;\mathcal{D}^s(M))$ starting at $\varphi_0$. The two following statements are equivalent:
\begin{itemize}
\item There exists an absolutely continuous fibered mapping $(\varphi(\cdot),P(\cdot)):[0,1]\rightarrow T^*\mathcal{D}^s(M)$ such that
$(\dot{\varphi}(t),\dot{P}(t))=\nabla^\omega H(\varphi(t),P(t),X(t))$
for almost every $t\in[0,1]$ and $P(1)=P_{\varphi_1}$, and such that
$$
0=\partial_XH(\varphi(t),P(t),X(t))=(\d R_{\varphi(t)})^*.P(t)-\langle X(t),\cdot\rangle,
$$
for almost every $t\in [0,1]$.
\item There exists $P_{\varphi_1}\in T_{\varphi_1}^*\mathcal{D}^s(M)\setminus\{0\}$  such that $\d A(\varphi(\cdot))=(\d\,\mathrm{end}_{\varphi_0}(\varphi(\cdot)))^*.P_{\varphi_1}$.
\end{itemize}
Under any of those statements, $\varphi(\cdot)$ is a (normal) geodesic.
\end{lemma}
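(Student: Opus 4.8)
The plan is to read the equivalence directly off the identity
\[
\left(\d A(\varphi(\cdot))-(\d\, \mathrm{end}_{\varphi_0}(\varphi(\cdot)))^*.P_{\varphi_1}\right)(t)=-\partial_XH(\varphi(t),P(t),X(t)),
\]
established just before the statement, together with the uniqueness part of Lemma \ref{adjend}. The crucial structural fact is that $H=H^0-\frac12\langle X,X\rangle$, where the subtracted term depends on $X$ alone; hence $\nabla^\omega H=\nabla^\omega H^0$, so the Hamiltonian system appearing in statement~(1) is literally the same as the one defining the adjoint lift in Lemma \ref{adjend}, while $\partial_X H=\partial_X H^0-\langle X,\cdot\rangle=(\d R_\varphi)^*.P-\langle X,\cdot\rangle$.

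First I would fix $P_{\varphi_1}\in T^*_{\varphi_1}\mathcal{D}^s(M)$ and invoke Lemma \ref{adjend}: the linear terminal-value Cauchy problem $(\dot\varphi,\dot P)=\nabla^\omega H^0(\varphi,P,X)$, $P(1)=P_{\varphi_1}$, has a unique absolutely continuous solution $P(\cdot)$, and $(\d\,\mathrm{end}_{\varphi_0}(\varphi(\cdot)))^*.P_{\varphi_1}$ is represented in $L^2(0,1;\mathcal{H}_e^*)$ by $t\mapsto(\d R_{\varphi(t)})^*.P(t)$. Because $\nabla^\omega H=\nabla^\omega H^0$, this very $P(\cdot)$ is also the unique lift that can occur in statement~(1). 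Thus terminal covectors $P_{\varphi_1}$ and admissible lifts $(\varphi(\cdot),P(\cdot))$ of the given curve are in one-to-one correspondence, and the whole lemma reduces to matching the vanishing of $\partial_X H$ with the Lagrange multiplier relation.

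Then I would run the two implications. If (1) holds, the lift satisfies $\partial_X H(\varphi(t),P(t),X(t))=0$ for a.e.\ $t$, so the displayed identity gives $\d A(\varphi(\cdot))=(\d\,\mathrm{end}_{\varphi_0}(\varphi(\cdot)))^*.P_{\varphi_1}$ in $L^2(0,1;\mathcal{H}_e^*)$, which is~(2); here one checks $P_{\varphi_1}\neq 0$, since $P_{\varphi_1}=0$ would force $P\equiv 0$ and then $\partial_X H=-\langle X,\cdot\rangle$, whence $X\equiv 0$, the trivial curve being excluded. Conversely, if (2) holds for some $P_{\varphi_1}\neq 0$, then taking the lift $P(\cdot)$ furnished by Lemma \ref{adjend} the identity forces $\partial_X H=0$ a.e., and since $(\dot\varphi,\dot P)=\nabla^\omega H^0=\nabla^\omega H$ this is exactly~(1).

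Finally, for the geodesic claim, under~(2) we have $\d A(\varphi(\cdot)).v=P_{\varphi_1}(\d\,\mathrm{end}_{\varphi_0}(\varphi(\cdot)).v)$ for every admissible variation $v$; since $T_{\varphi(\cdot)}\Omega_{\varphi_0,\varphi_1}\subset\ker(\d\,\mathrm{end}_{\varphi_0}(\varphi(\cdot)))$, the right-hand side vanishes on $T_{\varphi(\cdot)}\Omega_{\varphi_0,\varphi_1}$, so $\varphi(\cdot)$ is a critical point of $A$ restricted to $\Omega_{\varphi_0,\varphi_1}$, i.e.\ a geodesic in the sense of Definition \ref{def_geod}. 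I expect essentially no analytic obstacle here, as Lemma \ref{adjend} already carries the weight; the only points requiring care are the correct interpretation of all equalities as elements of the dual space $L^2(0,1;\mathcal{H}_e^*)$ and the $P_{\varphi_1}\neq 0$ normalization that formally distinguishes statements~(1) and~(2).
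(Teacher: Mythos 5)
Your proposal is correct and follows essentially the same route as the paper: the paper obtains the lemma by combining Lemma \ref{adjend} with the formula $\d A(\varphi(\cdot))(t)=\langle X(t),\cdot\rangle$ and the observation that $\nabla^\omega H=\nabla^\omega H^0$ for the total Hamiltonian $H=H^0-\frac12\langle X,X\rangle$, then reading the equivalence off the resulting identity and deducing the geodesic property from the inclusion $T_{\varphi(\cdot)}\Omega_{\varphi_0,\varphi_1}\subset\ker(\d\,\mathrm{end}_{\varphi_0}(\varphi(\cdot)))$, exactly as you do. Your additional remark about the normalization $P_{\varphi_1}\neq 0$ is a harmless refinement of a point the paper leaves implicit.
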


For every fixed $(\varphi,P)\in T^*\mathcal{D}^s(M)$, the equation $\partial_XH(\varphi,P,X)=0$ yields $\langle X,\cdot\rangle=(\d R_{\varphi})^*.P$, whose unique solution is given by $X(\varphi,P)=K_{\mathcal{H}_e}(\d R_{\varphi})^*.P$. %, where $K_{\mathcal{H}_e}:\mathcal{H}_e^*\rightarrow\mathcal{H}_e$ is defined in Remark \ref{rem_locexpression}.
Then we obtain the normal Hamiltonian $h:T^*\mathcal{D}^s(M)\rightarrow\R$ by setting
$$
h(\varphi,P) = H(\varphi,P,X(\varphi,P)) = \frac{1}{2}P(X(\varphi,P)\circ\varphi)=\frac{1}{2}\langle X(\varphi,P),X(\varphi,P)\rangle.
$$
Theorems \ref{normalgeod} and \ref{thm_pmp} follow, except for the global property of the flow in Theorem \ref{normalgeod}.

\begin{lemma}
The geodesic flow defined in Theorem \ref{normalgeod} is global.
\end{lemma}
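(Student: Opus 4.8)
The plan is to regard the Hamiltonian system \eqref{eq_geod} as an ordinary differential equation on the Hilbert manifold $T^*\mathcal{D}^s(M)$ and to upgrade the already-available local flow to a global one by excluding finite-time blow-up. Since $\nabla^\omega h$ is of class $\mathcal{C}^{k-1}$ with $k-1\geq 1$, it is locally Lipschitz, so the Cauchy-Lipschitz theorem in Banach spaces yields, for fixed initial data $(\varphi_0,P_0)$, a unique maximal solution $(\varphi(\cdot),P(\cdot))$ defined on an open interval $(t_-,t_+)\ni 0$. By the symmetry $t\mapsto -t$ of the problem it suffices to prove $t_+=+\infty$, so I argue by contradiction and assume $t_+<\infty$.

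The first step is an a priori bound on the logarithmic velocity. As the system is autonomous and Hamiltonian, $h$ is a first integral: along the solution $\frac{\d}{\d t}h(\varphi(t),P(t))=\omega(\nabla^\omega h,\nabla^\omega h)=0$ by antisymmetry of $\omega$. Hence $\langle X(t),X(t)\rangle=2h(\varphi_0,P_0)$ is constant on $(t_-,t_+)$, where $X(t)=X(\varphi(t),P(t))=K_{\mathcal{H}_e}(\d R_{\varphi(t)})^*P(t)$ denotes the logarithmic velocity. Through the continuous inclusion $\mathcal{H}_e\hookrightarrow\Gamma^{s+k}(TM)$ this gives a uniform bound $\vert X(t)\vert_{s+k}\leq C$ for all $t\in[0,t_+)$.

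The second step extends both the base curve and the covector up to $t_+$. Since $X(\cdot)\in L^2(0,t_+;\Gamma^{s+k}(TM))$ (it is even bounded), the generalized Cauchy-Lipschitz theorem recalled in Section \ref{sec_defi} shows that its flow belongs to $H^1(0,t_+;\mathcal{D}^s(M))$ and extends continuously to $\varphi(t_+)\in\mathcal{D}^s(M)$ (indeed in $\mathcal{D}^{s+k}(M)$); equivalently, the finite length $\int_0^{t_+}\sqrt{\langle X,X\rangle}\,\d t=\sqrt{2h}\,t_+<\infty$ makes $t\mapsto\varphi(t)$ a $d_{SR}$-Cauchy curve, convergent in $\mathcal{D}^s(M)$ by the completeness of Theorem \ref{complete}. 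To pass to the limit on the covector I use the momentum formulation: by Proposition \ref{prop_momentum} one has $\mu(t)=\varphi(t)_*\mu(0)$, a curve in the fixed space $\Gamma^{-s}(T^*M)$, and $P(t)=(\d R_{\varphi(t)^{-1}})^*\mu(t)$. Letting $t\to t_+^-$ and using the convergence of $\varphi(t)$ and of $\varphi(t)^{-1}$, I obtain $\mu(t)\to\varphi(t_+)_*\mu(0)=:\mu(t_+)$ and $P(t)\to(\d R_{\varphi(t_+)^{-1}})^*\mu(t_+)=:P(t_+)\in T^*_{\varphi(t_+)}\mathcal{D}^s(M)$.

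Consequently $(\varphi(t),P(t))\to(\varphi(t_+),P(t_+))$ in $T^*\mathcal{D}^s(M)$ as $t\to t_+^-$. Applying the local Cauchy-Lipschitz theorem at the point $(\varphi(t_+),P(t_+))$, whose existence time is uniform on a neighborhood because $\nabla^\omega h$ is locally Lipschitz, produces a solution past $t_+$ which, by uniqueness, strictly prolongs $(\varphi(\cdot),P(\cdot))$, contradicting the maximality of $(t_-,t_+)$. Therefore $t_+=+\infty$, and symmetrically $t_-=-\infty$, so the flow is global. I expect the main obstacle to be the last convergence step for the covector: one must verify that the pushforward $\psi\mapsto\psi_*\mu(0)$ is continuous from a neighborhood of $\varphi(t_+)$ in $\mathcal{D}^{s+k}(M)$ into $\Gamma^{-s}(T^*M)$, and that the identification $(\psi,\nu)\mapsto(\d R_{\psi^{-1}})^*\nu$ is continuous as $\psi$ and $\psi^{-1}$ vary. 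This is exactly the negative-index regularity bookkeeping underlying Corollary \ref{consord}, and it is where the extra derivatives afforded by $k\geq 2$ are genuinely used.
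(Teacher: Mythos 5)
Your proposal is correct and follows essentially the same route as the paper's proof: conservation of the Hamiltonian gives a uniform bound on the logarithmic velocity, hence convergence of $\varphi(t)$ in $\mathcal{D}^{s+k}(M)$ as $t$ approaches a finite endpoint of the maximal interval, and the momentum formulation $\mu(t)=\varphi(t)_*\mu(0)$ together with the continuity of the pushforward $(\psi,\nu)\mapsto\psi_*\nu$ yields convergence of $P(t)$, contradicting maximality. Your closing remark about the continuity of $(\psi,\nu)\mapsto(\d R_{\psi^{-1}})^*\nu$ is a fair point that the paper treats somewhat tersely, but it is the same bookkeeping the paper relies on.
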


\begin{proof}
Let $(\varphi(\cdot),P(\cdot)):I=(a,b)\rightarrow T^*\mathcal{D}^s(M)$ be the maximal solution to the Cauchy problem $(\dot{\varphi}(t),\dot P(t))=\nabla^\omega h(\varphi(t),P(t))$, $(\varphi(0),P(0)=(\varphi_0,P_0)$, with $0\in I$. Since $\nabla^\omega h$ has a well-defined maximal flow when $k\geq2$, it suffices to prove that, if $b<+\infty$, then $(\varphi(t),P(t))$ converges to a limit $(\varphi_b,P_b)$. 

Let $X(\cdot)$ be the logarithmic velocity of $\varphi(\cdot)$ and, for $t\in I$, let $\mu(t)=(\d R_\varphi(t))^*P(t)$. Then, we have $\mu(t)=\varphi(t)_*\mu(0)$ for every $t\in I$. Since $\varphi(\cdot)$ is a geodesic, we have $\Vert X(t)\Vert=\Vert X(0)\Vert\leq C\Vert X(0)\Vert_{s+k}$ for every $t\in I$, which immediately implies that $\varphi(t)\underset{t\rightarrow b}{\longrightarrow}\varphi_b$ in the topology of $\mathcal{D}^{s+k}(M)$ for some $\varphi_b\in \mathcal{D}^{s+k}(M)$. On the other hand, the mapping $\mathcal{D}^{s+1}(M)\times \Gamma^{-s}(T^*M)\rightarrow\Gamma^{-s}(T^*M)$ defined by $(\varphi,\mu)\mapsto \varphi_*\mu$ is continuous, so that $\mu(t)=\varphi(t)_*\mu(0)\underset{t\rightarrow b}{\longrightarrow}\mu_b={\varphi_b}_*\mu(0)$ and $P(t)\underset{t\rightarrow b}{\longrightarrow}P_b$, with $\mu_b=\d R_{\varphi_b}^*P_b$.

The same argument shows that $a=-\infty$.
\end{proof}

\section{Examples of geodesic equations}\label{ex}

\subsection{Normal geodesic equations in $\mathcal{D}(\R^d)$}
In this section, we assume that $M=\R^d$.
Let $s_0$ be the smallest integer such that $s_0>d/2$. It is easy to prove that for every integer $s\geq s_0+1$, the group $\mathcal{D}^s(\R^d)$ coincides with the set of diffeomorphisms $\varphi$ of $\R^d$ such that $\varphi-\mathrm{Id}_{\R^d}\in H^s(\R^d,\R^d)$, and is an open subset of the affine Hilbert space $\mathrm{Id}_{\R^d}+H^s(\R^d,\R^d)$ (endowed with the induced topology, see Section \ref{sec_defi}).

Since we are in $\R^d$, we have $T\mathcal{D}^{s}(\R^d)=\mathcal{D}^{s}(\R^d)\times H^s(\R^d,\R^d)$ and $T^*\mathcal{D}^{s}(\R^d)=\mathcal{D}^{s}(\R^d)\times H^{-s}(\R^d,(\R^d)^*)$.
Therefore, a covector $P\in T_\varphi^*\mathcal{D}^{s}(\R^d)$ is a one-form on $\R^d$ with distributional coefficients in $H^{-s}(\R^d)$, denoted by $P=P_1dx^1+\dots+P_ddx^d=(P_1,\dots,P_d)$. For a vector field $X$, we can also write $X=X^1e_1+\dots+X^de_d=(X^1,\dots,X^d)$, with $(e_i)$ the canonical frame of $\R^d$. 

The Euclidean inner product of two vectors $v$ and $w$ of $\R^d$ is denoted by $v\cdot w$. The notation $v^T$ stands for for the linear form $w\mapsto c\cdot w$. Conversely, for a linear form $p\in(\R^d)^*$, we denote by $p^T$ the unique vector $v$ in $\R^d$ such that $p=v^T$.

These notations are extended to vector fields and to $1$-forms with distributional coefficients, by setting $(X^1e_1+\dots+X^de_d)^T=X^1dx^1+\dots+X^ddx^d$ and $(P_1dx^1+\dots+P_ddx^d)^T=P_1e_1+\dots+P_de_d$.

\subsubsection{Spaces of vector fields with Gaussian kernels}
Let $\mathcal{H}_e$ be the Hilbert space of vector fields on $\R^d$ associated with the reproducing kernel $K:\R^d\times\R^d\rightarrow \mathrm{End}((\R^d)^*,\R^d)$ defined by
$K(x,y)p=e(x,y)p^T$, for every $p\in (\R^d)^*$, with $e(x-y)=e^{-\frac{\vert x-y\vert^2}{2\sigma}}$, for some $\sigma>0$. This space is widely used in shape deformation analysis (see \cite{TY1,TY2,YBOOK}).
Note that the mapping $(x,y)\mapsto e(x-y)$ is equal (up to a multiplying scalar) to the heat kernel at time $\sigma$.

The elements of $\mathcal{H}_e$ are analytic, and all their derivatives decrease exponentially at infinity, hence $\mathcal{H}_e\subset H^s(\R^d,\R^d)$ for every $s\in\N$. Moreover, for every one-form with tempered distributional coefficients $P$, the vector field $X$ such that $\langle X,\cdot\rangle=P$ on $\mathcal{H}_e$ is given by
%$$
%X^i(x)=\int_{\R^d}e(x-y)P_i(y)\,\d y,\quad i=1,\dots,d,\quad x\in\R^d,
%$$
%or, for short,
$
X(x)=\int_{\R^d}e(x-y)P(y)^T\,\d y,
$
for every $x\in\R^d$.
For every $(\varphi,P)\in T^*\mathcal{D}^{s}(\R^d)= \mathcal{D}^{s}(\R^d)\times H^{-s}(\R^d,\R^d)$, the solution of $\partial_XH(\varphi,P,X)=0$ is $X(\varphi,P)(x)=\int_{\R^d}e(x-\varphi(y))P(y)^T\,\d y$.
Therefore, the normal Hamiltonian is given by
$$
h(\varphi,P) = \frac{1}{2}P(X(\varphi,P)\circ\varphi)
=  \frac{1}{2} \int_{\R^d\times\R^d}e(\varphi(x)-\varphi(y)) P(x)\cdot P(y)\,\d y\,\d x .
$$
Since $e(x-y)=e(y-x)$ and $\d e(x).v=-\frac{1}{\sigma}e(x)(x\cdot v)$, we get that the normal geodesic equations are written in the distributional sense as
\begin{equation*}%\label{gaussker}
\begin{aligned}
\partial_t{\varphi}(t,x)&=\int_{\R^d}e(\varphi(t,x)-\varphi(t,y))P(t,y)^T\,\d y ,\\
\partial_t{P}(t,x)& =\frac{1}{\sigma} \int_{\R^d\times\R^d} e(\varphi(t,x)-\varphi(t,y)) \left(\varphi(t,x)-\varphi(t,y)\right)^T P(t,x)\cdot P(t,y) \,\d y.
\end{aligned}
\end{equation*}
%By inspecting the above equations, it is not difficult to prove that the support of $\partial_t P$ is contained in that of $P$, and that, if $P\in H^{-s'}(\R^d,\R^d)$ for some $s'\in\N$, then so is $\partial_t P$. \red{This also a direct consequence of Corollary \ref{consord}.}
%
A particularly simple example is when $P=a\otimes\delta_{x_0}$, with $a\in(\R^d)^*$ (that is,  $P(X)=a(X(x_0))$). In that case, we have
$h(\varphi,P)=\frac{1}{2}a\cdot a$, hence $\partial_\varphi h(\varphi,P)=0$ and therefore $P$ is constant along the geodesic flow, and
$
\partial_t\varphi(t,x)=e^{-\frac{\vert \varphi(t,x)-\varphi(t,x_0)\vert^2}{2\sigma}}a^T.
$
Note that, in this case, the particle $\varphi(t,x_0)=x_0+t a^T$ is a straight line with constant speed $a\cdot a$.

\subsubsection{Gaussian kernels for sub-Riemannian distributions in $\R^d$}
Let $X_1,\dots,X_k$ be smooth pointwise linearly independent vector fields on $\R^d$, with $k\leq d$, which are bounded as well as all their derivatives. These vector fields generate a sub-Riemannian structure on $\R^d$ (see \cite{Bellaiche,MBOOK} for this classical construction), with horizontal curves $t\mapsto x(t)$ satisfying the differential equation $\dot{x}(t)=\sum_{j=1}^ku_j(t)X_j(x(t))$ for almost every $t$, with $u_j\in L^2(0,1;\R)$ for every $j=1,\ldots,k$.
We consider the kernel $K(x,y)p=e(x,y)\sum_{j=1}^k p(X_j(y)) X_j(x)$.

Let us first prove that $K$ is the reproducing kernel of a Hilbert space of vector fields $\mathcal{H}_e$. For any compactly supported one-form $P$ with distributional coefficients, an easy computation gives
$K(x,\cdot)P=\sum_{j=1}^k\left(\int_{\R^d}e(x-y)P(y)(X_j(y))\,\d y\right)X_j(x)$,
where $P(y)(X_j(y))= P_1(y)X^1_j(y)+\cdots+P_d(y)X^d_j(y)$. The vector $K(x,\cdot)P$ is well defined since the vector fields $y\mapsto e(x-y)X_j(y)$ are smooth and all their derivatives decrease exponentially at infinity, and $P$ is a compactly supported one-form with distributional coefficients.
Then we have
$$
P(K(\cdot,\cdot)P)=\sum_{j=1}^k\int_{\R^d}e(x-y)P(y)(X_j(y))P(x)(X_j(x))\,\d y\,\d x.
$$
In order to check that $K$ is the reproducing kernel of a Hilbert space of vector fields, it is sufficient to check that $P(K(\cdot,\cdot)P)\geq 0$ for any such one-form $P$, and $P(K(\cdot,\cdot)P)=0$ if and only if the vector field $x\mapsto K(x,\cdot)P$ is identically equal to $0$ (see \cite{YBOOK}).
For such a $P$, we set $p_j(x)=P(x)(X_j(x))$, for $j=1,\dots,k$.
Then we have
$K(x,\cdot)P=\sum_{j=1}^k\int_{\R^d}e(x-y)p_j(y)\,\d y \ X_j(x)$,
and therefore
$
P(K(\cdot,\cdot)P)=\sum_{j=1}^k\int_{\R^d}e(x-y)p_j(y)p_j(x)\,\d y\,\d x.
$
Since $(x,y)\mapsto e(x-y)$ is the heat kernel at time $\sigma$, it follows that $\int_{\R^d}e(x-y)T(y)T(x)\,\d y\,\d x \geq 0$ for any distribution $T$ on $\R^d$, with equality if and only if $T=0$.
Therefore, $P(K(\cdot,\cdot)P)$ is nonnegative, and is equal to $0$ if and only if $p_1=\dots=p_k=0$, in which case $K(x,\cdot)P=0$ for every $x$ in $\R^d$. We have thus proved that $K$ is the reproducing kernel of a Hilbert space of vector fields $\mathcal{H}_e$. 

Moreover, any element of $\mathcal{H}_e$ can be written as $x\mapsto u_1(x)X_1(x)+\cdots+u_k(x)X_k(x)$ (and is horizontal with respect to the sub-Riemannian structure on $\R^d$ induced by the $X_i$), where the $u_j$'s are analytic functions with all derivatives decreasing exponentially at infinity. In particular, the inclusions $\mathcal{H}_e\subset H^s(\R^d,\R^d)$ are  continuous for every integer $s$.

\begin{remark}
It could be more natural to replace $e(x-y)$ with the sub-Riemannian heat kernel associated with the sub-Laplacian $X_1^2+\dots+X_k^2$, but since such kernels are much harder to compute, we will keep the Euclidean heat kernel.
\end{remark}

For every $(\varphi,P)\in T^*\mathcal{D}^{s}(\R^d)= \mathcal{D}^{s}(\R^d)\times H^{-s}(\R^d,\R^d)$, the solution of $\partial_XH(\varphi,P,X)=0$ is
$X(\varphi,P)(x) = K(x,\varphi(\cdot))P =\sum_{j=1}^k\left(\int_{\R^d}e(x-\varphi(y))P(y)(X_j(\varphi(y)))\,\d y \right)X_j(x)$.
The normal Hamiltonian is given by
$$
h(\varphi,P)
=
\frac{1}{2}\sum_{j=1}^k \int_{\R^d\times\R^d}e(\varphi(x)-\varphi(y))P(y)(X_j(\varphi(y)))P(x)(X_j(\varphi(x)))\,\d y\,\d x .
$$
We infer that the normal geodesic equations are written as
$$
\begin{aligned}
\partial_t{\varphi}(t,x)&=\sum_{j=1}^k\left(\int_{\R^d} e(\varphi(t,x)-\varphi(t,y))P(t,y)(X_j(\varphi(t,y)))\,\d y\right)X_j(\varphi(t,x)),\\
\partial_t{P}(t,x)&=\frac{1}{\sigma}\sum_{j=1}^k\int_{\R^d}e(\varphi(t,x)-\varphi(t,y))(\varphi(t,x)-\varphi(t,y))^T P(t,y)(X_j(\varphi(t,y)))P(t,x)(X_j(\varphi(t,x))) \,\d y\\
&
\ \ \ \,
-
\sum_{j=1}^k\int_{\R^d}e(\varphi(t,x)-\varphi(t,y))P(t,y)(X_j(\varphi(t,y)))
P(x)(\d X_j(\varphi(t,x)))\,\d y.
\end{aligned}
$$
Here, we have $P(x)(\d X_j(\varphi(t,x))) = P_1(x)\d X^1_j(\varphi(t,x)) +\dots+P_d(x)\d X^d_j(\varphi(t,x))$.
It is easy to check that this geodesic flow preserves the support of $P$ and its regularity. 

In the simple case where $P=a\otimes\delta_{x_0}$ with $a\in(\R^d)^*$, we have
$$
X(\varphi,P)(x)=e(x-\varphi(x_0))\sum_{j=1}^ka(X_j(\varphi(x_0)))X_j(x).
$$
Since $e(0)=1,$ we get
$h(\varphi,a\otimes\delta_{x_0})=\frac{1}{2}\sum_{j=1}^ka(X_j(\varphi(x_0)))^2$.
It is interesting to note that $h(\varphi,a\otimes\delta_{x_0})=h^{\Delta}(\varphi(x_0),a)$, where $h^{\Delta}$ is the normal hamiltonian for the sub-Riemannian structure induced by the $X_j$'s on $\R^d$. 
Now we have
%$$
%\begin{aligned}
%\partial_\varphi h(\varphi,a\otimes\delta_{x_0})(\delta\varphi)&=
%\sum_{r=1}^ka(X_r(\varphi(x_0)))a(\d X_{r,\varphi(x_0)}(\delta\varphi(x_0)).
%\end{aligned}
%$$
%Therefore,
$
\partial_\varphi h(\varphi,a\otimes\delta_{x_0})=
\left(\sum_{r=1}^ka(X_r(\varphi(x_0)))a(\d X_{r,\varphi(x_0)})\right)\otimes\delta_{x_0}.
$
We see that the subbundle $\mathcal{D}^s(\R^d)\times(\R^{d*}\otimes\delta_{x_0})\subset T^*\mathcal{D}^s(\R^d)$ is invariant under the Hamiltonian geodesic flow. Hence, if $t\mapsto(\varphi(t),P(t))$ is solution of the normal geodesic equations and $P(0)=a(0)\otimes\delta_{x_0}$, then there exists a curve $t\mapsto a(t)\in\R^d$ such that $P(t)=a(t)\otimes\delta_{x_0}$ for every $t$.
Denoting by $x(t)=\varphi(t,x)$ for $x\in\R^d$ and by $x_0(t)=\varphi(t,x_0)$, we finally obtain
$$
\dot{x}(t)= e(x(t)-x_0(t))\sum_{j=1}^ka(t)(X_i(x_0(t)))X_j(x(t)),\quad
\dot{a}(t)=-\sum_{j=1}^ka(X_r(x_0(t)))a(\d X_j(x_0(t))).
$$
In particular, if $x(0)=x_0$, we get
$$
\dot{x}(t)=\sum_{j=1}^ka(X_j(x(t)))X_j(x(t)),\quad
\dot{a}(t)=-\sum_{j=1}^ka(X_j(x(t)))a(\d X_j(x(t))).
$$
We recover the equations satisfied by a normal geodesic in $\R^d$ for the sub-Riemannian structure induced by the $X_j$'s on $\R^d$, with initial covector $a(0)$.

\subsection{Singular curves with Dirac momenta in shape spaces of landmarks}\label{singdirac}
Let $M$ be a complete Riemannian manifold with bounded geometry, of dimension $d$, let $s>d/2+1$ be an integer, and let $\mathcal{H}_e$ be a Hilbert space with continuous inclusion in $\Gamma^{s+1}(TM)$.

It is difficult to give a complete description of any possible singular curve for the sub-Riemannian structure induced by $\mathcal{H}_e$. However, we know from Theorem \ref{thm_abnormal} that any such curve $\varphi(\cdot)$ is associated with a singular covector $t\mapsto P(t)\in T^*_{\varphi(t)}\mathcal{D}^s(M)$, satisfying \eqref{abgeod1} and \eqref{abgeod2}. %Therefore, we can look for singular curves for which $P$ has a given form. 
In this section, we focus on those singular curves whose singular covector is a finite sum of Dirac masses, and we prove that they correspond to abnormal curves for a certain finite-dimensional control system in certain manifolds, called shape spaces of landmarks. 

Let us first explain what a shape space of landmarks is.
%\subsection{The shape space of landmarks}

\begin{definition}
For every integer $n\in \N^*$, the manifold of $n$ landmarks of $M$ is defined by
$$
\mathrm{Lmk}^n(M)=\{(x_1,\dots,x_n)\in M^n\mid i\neq j\Rightarrow\, x_i\neq x_j\}.
$$
\end{definition}

Landmarks manifolds are of great interest in shape analysis (see \cite{YBOOK}). Since $s>d/2+1$, the group $\mathcal{D}^s(M)$ has a $\mathcal{C}^1$ action on the manifold $\mathrm{Lmk}^n(M)$, defined by
$$
\varphi\cdot (x_1,\dots,x_n)=(\varphi(x_1),\dots,\varphi(x_n)).
$$
Note that, by definition of $\mathrm{Lmk}^n(M)$, the mapping $R_q:\varphi\mapsto\varphi\cdot q$ is a submersion, for every $q=(x_1,\dots,x_n)$.

To this differentiable action is associated the infinitesimal action $\xi$, defined as the linear bundle morphism $\Gamma^s(TM)\times \mathrm{Lmk}^n(M)\rightarrow T\mathrm{Lmk}^n(M)$ given by
$$
\xi_qX=(X(x_1),\dots,X(x_n)), 
$$
for every $q=(x_1,\dots,x_n)\in \mathrm{Lmk}^n(M)$ and every $X\in \Gamma^s(TM)$.
Then $\mathrm{Lmk}^n(M)$ turns out to be a \emph{shape space} (as defined in \cite{ATY}). Restricting this morphism to $\mathcal{H}_e\times \mathrm{Lmk}^n(M)$, we obtain the control system 
\begin{equation}\label{cslmk}
\dot{q}(t)=\xi_{q(t)}X(t), % \quad X(\cdot)\in L^2(0,1;\mathcal{H}_e),
\end{equation}
for almost every $t\in[0,1]$, that is, $\dot{x}_i(t)=X(t,x_i(t))$, $i=1,\dots,n$. 
In this control system, the control is $X(\cdot)\in L^2(0,1;\mathcal{H}_e)$.
Note that, if $\varphi(t)$ is the flow of $X$, defined as the unique solution of the Cauchy problem $\dot{\varphi}(t)=X(t)\circ\varphi(t)$ and $\varphi(0)=\mathrm{Id}_M$, then $q(t)=\varphi(t)\cdot q(0)$ for every $t$.

This control system can be as well seen as a rank-varying sub-Riemannian structure on $\mathrm{Lmk}^n(M)$ (as defined in \cite{ABCG}).

For every $q_0\in \mathrm{Lmk}^n(M)$, the end-point mapping $\mathrm{end}^n_{q_0}:L^2(0,1;\mathcal{H}_e)\rightarrow \mathrm{Lmk}^n(M)$ is defined by $\mathrm{end}^n_{q_0}(X(\cdot))=q(1)$, where $q(\cdot)$ is the unique solution of the Cauchy problem $\dot{q}(t)=\xi_{q(t)}X(t)$, $q(0)=q_0$.
Obviously, we have $\mathrm{end}^n_{q_0}=R_{q_0}\circ\mathrm{end}_e$, where $R_{q_0}(\varphi)=\varphi\cdot q_0$ (and where $\mathrm{end}_e$ is the end-point mapping defined in Definition \ref{end-point}).

%\subsection{Abnormal curve with Dirac momenta}
%We can now prove the desired result.

\begin{proposition}
Let $\varphi(\cdot)\in H^1(0,1;\mathcal{D}^s(M))$ be an horizontal curve starting at $e$, with logarithmic velocity $X(\cdot)$. Let $q_0=(x_1,\dots,x_n)\in \mathrm{Lmk}^n(M)$, and let $q(\cdot)=\varphi(\cdot)\cdot q_0$ be the corresponding curve on $\mathrm{Lmk}^n(M)$. 
The curve $\varphi(\cdot)$ is abnormal, associated with a singular covector $P(\cdot)$ such that 
$$P(1)=\sum_{i=1}^n p_{i}(1)\otimes \delta_{x_{i}},\quad p_{i}(1)\in T^*_{x_{i,0}}M,$$
if and only if $q(\cdot)$ is an abnormal curve for the control system \eqref{cslmk}.
\end{proposition}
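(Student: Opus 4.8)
The plan is to reduce both notions of abnormality to the non-triviality of the kernel of the adjoint of the relevant end-point mapping, and then to transport one into the other through the factorization $\mathrm{end}^n_{q_0}=R_{q_0}\circ\mathrm{end}_e$, exploiting crucially that $R_{q_0}$ is a submersion.

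First I would record the two characterizations side by side. On the group side, combining Definition \ref{def_singular} and Remark \ref{rem_sing2} with Lemma \ref{adjend} and Theorem \ref{thm_abnormal}, the curve $\varphi(\cdot)$ is singular, with singular covector $P(\cdot)$ and terminal value $P(1)=P_{\varphi_1}$, if and only if $P_{\varphi_1}\in T^*_{\varphi(1)}\mathcal{D}^s(M)\setminus\{0\}$ and $(\d\,\mathrm{end}_e(\varphi(\cdot)))^*.P_{\varphi_1}=0$; here $P(\cdot)$ is the unique $H^1$ solution of the adjoint equation \eqref{abgeod1} with terminal condition $P(1)=P_{\varphi_1}$, and the vanishing of $(\d\,\mathrm{end}_e(\varphi(\cdot)))^*.P_{\varphi_1}$ is precisely the pointwise constraint \eqref{abgeod2}. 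On the landmark side, $q(\cdot)$ is by definition an abnormal curve of \eqref{cslmk} if and only if there is $p\in T^*_{q(1)}\mathrm{Lmk}^n(M)\setminus\{0\}$ with $(\d\,\mathrm{end}^n_{q_0}(X(\cdot)))^*.p=0$.

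Next I would compute the adjoint of $\d R_{q_0}(\varphi(1))$. Since $R_{q_0}(\varphi)=(\varphi(x_1),\dots,\varphi(x_n))$, one has $\d R_{q_0}(\varphi(1)).Y=(Y(x_1),\dots,Y(x_n))$ for $Y\in T_{\varphi(1)}\mathcal{D}^s(M)$, so that for $p=(p_1,\dots,p_n)\in T^*_{q(1)}\mathrm{Lmk}^n(M)$ we get $\big((\d R_{q_0}(\varphi(1)))^*.p\big)(Y)=\sum_{i=1}^n p_i(Y(x_i))$, i.e. $(\d R_{q_0}(\varphi(1)))^*.p=\sum_{i=1}^n p_i\otimes\delta_{x_i}$. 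This is exactly the announced Dirac form of $P(1)$. Differentiating the factorization and dualizing then gives $(\d\,\mathrm{end}^n_{q_0}(X(\cdot)))^*=(\d\,\mathrm{end}_e(\varphi(\cdot)))^*\circ(\d R_{q_0}(\varphi(1)))^*$, both sides being understood as maps into $L^2(0,1;\mathcal{H}_e^*)$ after identifying horizontal curves with their logarithmic velocities.

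It then remains to run the equivalence in both directions. If $q(\cdot)$ is abnormal, I would pick $p\neq 0$ with $(\d\,\mathrm{end}^n_{q_0}(X(\cdot)))^*.p=0$ and set $P(1)=(\d R_{q_0}(\varphi(1)))^*.p=\sum_i p_i\otimes\delta_{x_i}$; then $(\d\,\mathrm{end}_e(\varphi(\cdot)))^*.P(1)=0$, and solving \eqref{abgeod1} backward from $P(1)$ yields a singular covector of $\varphi(\cdot)$ of the required form. Conversely, a singular covector $P(\cdot)$ with $P(1)=\sum_i p_i(1)\otimes\delta_{x_i}$ yields $p=(p_1(1),\dots,p_n(1))$ with $(\d\,\mathrm{end}^n_{q_0}(X(\cdot)))^*.p=(\d\,\mathrm{end}_e(\varphi(\cdot)))^*.P(1)=0$. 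The one subtle point, and the step I expect to require the most care, is the non-triviality bookkeeping: I must ensure that a nonzero covector on one side does not map to the zero covector on the other. This is exactly where I would invoke that $R_{q_0}$ is a submersion, so that $\d R_{q_0}(\varphi(1))$ is onto and hence $(\d R_{q_0}(\varphi(1)))^*$ is injective, giving $P(1)=0$ if and only if $p=0$. Everything else is the chain rule together with the explicit identification of $(\d R_{q_0}(\varphi(1)))^*$ as a Dirac-momentum map.
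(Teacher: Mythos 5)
Your argument is correct and is essentially the paper's own proof: both rest on the factorization $\mathrm{end}^n_{q_0}=R_{q_0}\circ\mathrm{end}_e$, the characterization of abnormality via the kernel of the adjoint of the end-point mapping, and the fact that $R_{q_0}$ is a submersion, which identifies the covectors $P(1)$ pulled back through $\d R_{q_0}(\varphi(1))$ with exactly the Dirac-type momenta $\sum_i p_i\otimes\delta_{x_i}$ and guarantees the nonvanishing correspondence in both directions. The only cosmetic difference is that the paper phrases the pullback condition as $\ker\d R_{q_0}(\varphi(1))\subset\ker(P(1))$ while you compute $(\d R_{q_0}(\varphi(1)))^*$ explicitly; these are equivalent.
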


\begin{proof}
Since $q(t)=\varphi(t)\cdot q(0)$, we have $\mathrm{end}^n_{q_0}(X(\cdot))=\mathrm{end}_{e}(X(\cdot))\cdot q_0$. In the finite-dimensional manifold $\mathrm{Lmk}^n(M)$, the curve $q(\cdot)$ is abnormal if and only if (see, e.g., \cite{ChitourJeanTrelatJDG}) there exists $p(1)=(p_1(1),\dots,p_n(1))\in T^*_{q(1)}\mathrm{Lmk}^n(M)\setminus\{0\}$ such that $d\,\mathrm{end}^n_{q_0}(X(\cdot))^*.p(1)=0$.
Since the mapping $\varphi\mapsto \varphi\cdot q_0$ is a submersion and $\mathrm{end}^n_{q_0}=R_{q_0}\circ\mathrm{end}_e$, this is equivalent to the existence of $P(1)\in T_{\varphi(1)}^*\mathcal{D}^s(M)\setminus\{0\}$ such that $d\,\mathrm{end}_e(X)^* .P(1)=0$ and such that $\ker \d R_{q_0}(\varphi(1))\subset\ker(P(1))$, which means, since $\d R_{q_0}(\varphi(1)).\delta \varphi=(\delta\varphi(x_1),\dots,\delta\varphi(x_n))$, that $P(1)=p_1(1)\otimes \delta_{x_1}+\dots+p_n(1)\otimes\delta_{x_n}$.
\end{proof}

\begin{remark}
Since the support of $P(\cdot)$ and its order as a distribution remain constant in time, we infer that, for every $t\in [0,1]$, there exist $p_i(t)\in T^*_{\varphi(t,x_i)}M,$ $i=1,\dots,n$ such that $P(t)=p_1(t)\otimes \delta_{x_1}+\dots+p_n(t)\otimes\delta_{x_n}$. Using the momentum formulation from Section \ref{momentum}, and setting $p(t)=(p_1(t),\dots,p_n(t))$, it is easy to check that $(q(\cdot),p(\cdot))$ is a curve on $T^*\mathrm{Lmk}^n(M)$, satisfying the abnormal Hamiltonian equations associated with the control system \eqref{cslmk} (see \cite{ChitourJeanTrelatJDG} for a detailed analysis of those equations).
\end{remark}

\begin{remark}
Applying the above results with $n=1$, we get the statement claimed in Remark \ref{rem_sing2}: singular curves on $M$ induce singular curves on $\mathcal{D}^s(M)$. This resonates strongly with the next section, where we will see that reachability in $M$ implies reachability in $\mathcal{D}^s(M)$ (at least when $k=0$).
\end{remark}

\section{Reachability properties in the group of diffeomorphims}\label{sec3}
Throughout this section, $(\mathcal{H}_e,\langle\cdot,\cdot\rangle)$ is a Hilbert space of vector fields of class at least $H^s$ on a Riemannian manifold $M$ of bounded geometry and of dimension $d$, with continuous inclusion in $\Gamma^s(TM)$ and $s$ is an integer such that $s>d/2+1$.
According to Definition \ref{srstruc}, we consider the right-invariant sub-Riemannian structure induced by $\mathcal{H}_e$ on $\mathcal{D}^s(M)$.

The purpose of this section is to provide sufficient conditions on $\mathcal{H}_e$ ensuring approximate or exact reachability from $e$.

\begin{definition}
The \emph{reachable set} from $e=\mathrm{id}_M$ is defined by
$$
\mathcal{R}(e) = \{\varphi\in\mathcal{D}^s(M) \mid d_{SR}(e,\varphi)<+\infty\} .
$$
In other words, $\mathcal{R}(e)$ is the set of all $\varphi\in\mathcal{D}^s(M)$ that are in the image of $\mathrm{end}_{e}$, i.e., that can be connected from $e$ by means of an horizontal curve $\varphi(\cdot)\in H^1(0,1;\mathcal{D}^s(M))$.

We say that $\varphi\in\mathcal{D}^s(M)$ is reachable from $e$ if $\varphi\in\mathcal{R}(e)$, and is approximately reachable from $e$ if $\varphi$ belongs to the closure of $\mathcal{R}(e)$ in $\mathcal{D}^s(M)$.
\end{definition}

Hereafter, we first establish a general approximate reachability result. However, in such a general context, we cannot hope to have stronger reachability properties. In the more particular case where the sub-Riemannian structure on $\mathcal{D}^s(M)$ is coming from a finite-dimensional structure (as in Example \ref{subriem}), we establish an exact reachability property.

\subsection{Approximate reachability}
We start with the following simple result.

\begin{proposition}\label{prop_dense}
If $\mathcal{H}_e$ is dense in $\Gamma^s(TM)$, then $\mathcal{R}(e)$ is dense in $\mathcal{D}^s(M)$.
\end{proposition}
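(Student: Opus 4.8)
The plan is to show that an arbitrary target $\varphi_1\in\mathcal{D}^s(M)$ lies in the closure of $\mathcal{R}(e)$, by first realizing $\varphi_1$ as the time-$1$ flow of an $L^2$-in-time curve of vector fields, and then perturbing that driving curve into $L^2(0,1;\mathcal{H}_e)$ while keeping control on the resulting endpoint. Fix $\varphi_1\in\mathcal{D}^s(M)$. Since $\mathcal{D}^s(M)$ is, by definition, the connected component of $e$ in a Hilbert manifold, it is path-connected, and two points can be joined by a smooth (in particular $H^1$) path $\psi(\cdot)\in H^1(0,1;\mathcal{D}^s(M))$ with $\psi(0)=e$ and $\psi(1)=\varphi_1$. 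By the correspondence recalled in Section \ref{sec_defi} (applied with $\varphi_0=e$), its logarithmic velocity $Y(\cdot)=\dot\psi(\cdot)\circ\psi(\cdot)^{-1}$ belongs to $L^2(0,1;\Gamma^s(TM))$ and $\varphi_1=\varphi^Y(1)$. It therefore suffices to approximate $Y(\cdot)$ by the logarithmic velocity of a horizontal curve.

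Next I would check that, via the continuous inclusion $\mathcal{H}_e\hookrightarrow\Gamma^s(TM)$, the space $L^2(0,1;\mathcal{H}_e)$ is dense in $L^2(0,1;\Gamma^s(TM))$. Given $\varepsilon>0$, I would first approximate $Y(\cdot)$ in $L^2(0,1;\Gamma^s(TM))$ by a step function taking finitely many values $Y_1,\dots,Y_N\in\Gamma^s(TM)$ on a partition of $[0,1]$, using that simple functions are dense. Then, using the hypothesis that $\mathcal{H}_e$ is dense in $\Gamma^s(TM)$, I would replace each $Y_i$ by some $X_i\in\mathcal{H}_e$ with $\vert X_i-Y_i\vert_s$ as small as needed. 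The resulting step function $X(\cdot)$ takes finitely many values in $\mathcal{H}_e$, hence belongs to $L^2(0,1;\mathcal{H}_e)$, and satisfies $\Vert X-Y\Vert_{L^2(0,1;\Gamma^s)}<\varepsilon$. By definition of horizontal curves, $\varphi^X(\cdot)$ is horizontal, so $\varphi^X(1)\in\mathcal{R}(e)$.

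Finally I would invoke the continuous dependence of the flow on its driving field: the map $Y(\cdot)\mapsto\varphi^Y(1)$ from $L^2(0,1;\Gamma^s(TM))$ to $\mathcal{D}^s(M)$ is continuous for the $H^s$ topology, by the generalized Cauchy-Lipschitz theorem already invoked in Section \ref{sec_defi} (see \cite{TBOOK}). Consequently, choosing $X(\cdot)$ sufficiently close to $Y(\cdot)$ in $L^2(0,1;\Gamma^s(TM))$ forces $\varphi^X(1)$ to be arbitrarily close to $\varphi^Y(1)=\varphi_1$ in $\mathcal{D}^s(M)$. Since $\varphi^X(1)\in\mathcal{R}(e)$ and $\varphi_1$ is arbitrary, this shows $\mathcal{R}(e)$ is dense.

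The hard part will be the last step: everything hinges on the \emph{quantitative} continuous (indeed locally Lipschitz) dependence of the time-$1$ flow on the $L^2$-in-time driving vector field, which is the only genuinely non-elementary input and relies on the effective form of the infinite-dimensional Cauchy-Lipschitz theorem on $\mathcal{D}^s(M)$. By contrast, the two density reductions (simple-function approximation in time, and fiberwise density of $\mathcal{H}_e$ in $\Gamma^s(TM)$) are routine.
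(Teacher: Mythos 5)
Your strategy is genuinely different from the paper's, and it can be made to work, but as written it hinges on a step whose justification is inadequate. Steps 1 and 2 are fine: $\mathcal{D}^s(M)$ is a connected Hilbert manifold, hence joinable to $e$ by an $H^1$ path whose logarithmic velocity $Y(\cdot)$ lies in $L^2(0,1;\Gamma^s(TM))$, and $\mathcal{H}_e$-valued step functions are dense there. The problem is the last step. What you need is continuity of $Y(\cdot)\mapsto\varphi^Y(1)$ from $L^2(0,1;\Gamma^s(TM))$ into $\mathcal{D}^s(M)$ \emph{equipped with its $H^s$ manifold topology} (convergence in $\mathcal{C}^0$ or in $H^{s-1}$ would not give density in $\mathcal{D}^s(M)$), and this does \emph{not} follow from the Cauchy--Lipschitz theorem invoked in Section \ref{sec_defi}: the right-hand side $(X,\varphi)\mapsto X\circ\varphi$ is only continuous, not Lipschitz, in $\varphi$ when $X$ is merely of class $H^s$, because estimating $\vert Y\circ\varphi_1-Y\circ\varphi_2\vert_s$ costs one derivative of $Y$. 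The naive Gr\"onwall argument therefore only closes in the weaker $H^{s-1}$ topology, and your parenthetical claim of locally Lipschitz dependence in $H^s$ is false. The continuity (not Lipschitzness) of the flow map into $C([0,1],\mathcal{D}^s(M))$ is nevertheless a true theorem, established in \cite{BV} by approximating the driving field by smoother ones, so your argument is complete once you cite or reprove that result rather than attributing it to the Cauchy--Lipschitz theorem.

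The paper's proof avoids the endpoint map entirely. It notes that the closure $B$ of the orbit $\mathcal{R}(e)$ is a union of orbits (by right-invariance), that at every $\varphi\in B$ the set of initial velocities of horizontal curves remaining in $B$ contains $\mathcal{H}_\varphi$, which is dense in $T_\varphi\mathcal{D}^s(M)$ by hypothesis, and then invokes the orbit lemma of \cite{DS} (see also \cite{HeintzeLiu_AM1999}): a closed subset of a connected Banach manifold whose sets of initial velocities are dense at each of its points must be the whole manifold. That route is shorter and sidesteps the delicate continuity analysis; your route, once the continuity of the flow map is properly sourced, has the advantage of being quantitative and self-contained in spirit, exhibiting an explicit approximating horizontal curve for each target.
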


This result holds true as well for $s=+\infty$. Note that, for $s=+\infty$, it was generalized to the context of convenient spaces in \cite{KM}.

It can also be noted that it is not required to assume that $M$ is connected. What is important is that $\mathcal{D}^s(M)$ itself is connected (by definition).

\begin{proof}
The assumption implies that $\mathcal{H}^s$ is dense in $T\mathcal{D}^s(M)$. 
We recall the following general lemma.

\begin{lemma}
Let $\mathcal{M}$ be a connected Banach manifold, and let $B\subset\mathcal{M}$ be a closed subset such that, for every $q\in B,$ the set of initial velocities of curves in $B$ starting at $q$ is dense in $T_q\mathcal{M}$. Then $B=\mathcal{M}$.
\end{lemma}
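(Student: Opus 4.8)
The plan is to prove that $B$ is \emph{open}; since $B$ is closed by hypothesis and $\mathcal{M}$ is connected, the only nonempty clopen subset is $\mathcal{M}$ itself, so (assuming $B\neq\emptyset$, as otherwise the statement is vacuous) this forces $B=\mathcal{M}$. Everything therefore reduces to the local claim that, for each $q_0\in B$, the set $B$ contains an entire neighborhood of $q_0$. I would fix $q_0$ and work in a chart identifying a neighborhood of $q_0$ with an open ball of the Banach model space $E$, sending $q_0$ to $0$; in this chart all tangent spaces are canonically $E$, and the hypothesis becomes: for every $q\in B$ near $0$, the initial velocities $\dot\gamma(0)\in E$ of curves $\gamma$ lying in $B$ with $\gamma(0)=q$ form a dense subset of $E$ (a cone, since reparametrizing $t\mapsto\gamma(\lambda t)$ scales the velocity).

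The engine of the argument is a \emph{greedy reachability} step. Given a target $w\in E$ close to $0$ and a current point $q\in B$, I would use density to pick a curve $\gamma$ in $B$ with $\gamma(0)=q$ and $\dot\gamma(0)=v$, where $v$ is within a small relative error of $w-q$. Because $\gamma(t)=q+t(w-q)+o(t)$, the quantity $\|\gamma(t)-w\|$ satisfies, at first order in any normed space, $\|\gamma(t)-w\|\le(1-t)\|q-w\|+\|o(t)\|$; choosing $t=t(q)>0$ small enough that the remainder is dominated by $\tfrac12 t\|q-w\|$ produces a point $q'=\gamma(t)\in B$, reachable from $q$ within $B$, with $\|q'-w\|\le(1-\tfrac12 t)\|q-w\|$. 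Iterating this from $q_0$ yields a sequence $(q_j)$ in $B$ with $\|q_{j+1}-w\|\le(1-\tfrac12 t_j)\|q_j-w\|$, and with $\sum_j\|q_{j+1}-q_j\|$ controlled by $\tfrac32\sum_j t_j\|q_j-w\|$. The concatenation of the corresponding pieces, reparametrized onto $[0,1)$, then converges (by completeness of $E$) to a curve whose image lies in $B$ and whose endpoint is the limit of $(q_j)$; since $B$ is closed, that limit belongs to $B$. Carrying this out for every $w$ in a sufficiently small ball about $0$ (small enough that the construction never leaves the chart) would show that $B$ is a neighborhood of $q_0$, i.e.\ that $B$ is open.

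The hard part will be the convergence bookkeeping, namely guaranteeing that the iteration actually reaches $w$ rather than stalling at a positive distance. The contraction factor $1-\tfrac12 t_j$ is useful only if $\sum_j t_j=+\infty$, whereas remainder control forces each $t_j$ to be no larger than a threshold $\bar t(q_j)$ that could a priori shrink. I would handle this by a dichotomy: if $\sum_j t_j=+\infty$ then $\|q_j-w\|\to0$, and the integral-comparison estimate $\sum_j t_j e^{-\frac12\sum_{i<j}t_i}<+\infty$ shows $(q_j)$ is Cauchy with limit $w$; if instead $\sum_j t_j<+\infty$ the sequence still converges, to some $q_\infty\in B$ reachable from $q_0$, from which the density hypothesis furnishes a fresh strictly-decreasing step, so the infimal distance to $w$ over the reachable subset of $B$ cannot be positive. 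I would stress that this iterative, limit-based scheme is genuinely necessary rather than a finite-dimensional orbit-theorem argument: since $E$ is infinite-dimensional, no finite family of directions can cover a neighborhood, so the proof must exploit density of velocities together with completeness of $E$ and closedness of $B$, using only the first-order information carried by the curves.
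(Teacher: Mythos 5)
First, a remark on the comparison itself: the paper does not actually prove this lemma. It is quoted inside the proof of Proposition~\ref{prop_dense} and attributed to \cite{DS} and to \cite[Theorem D and Corollary A.2]{HeintzeLiu_AM1999}, so your attempt can only be measured against the standard published arguments, not against a proof in the text.

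Your global strategy (prove that $B$ is open and invoke connectedness), the chart reduction, and the greedy step $\Vert\gamma(t)-w\Vert\leq(1-\tfrac12 t)\Vert q-w\Vert$ are all sound, and you correctly locate the difficulty in the convergence bookkeeping; but the dichotomy you propose does not close it. In the branch $\sum_j t_j<+\infty$ you obtain a limit $q_\infty\in B$ and assert that one ``fresh strictly-decreasing step'' from $q_\infty$ shows that the infimal distance $\delta$ to $w$ over the relevant subset of $B$ cannot be positive. That inference is valid only if the infimum is \emph{attained}: a fresh step reduces $\Vert q_\infty-w\Vert$, but $q_\infty$ is merely the limit of one particular stalled iteration and there is no reason why $\Vert q_\infty-w\Vert=\delta$; the set over which you take the infimum is not compact, so there is no minimizer to contradict. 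After the fresh step the restarted iteration can stall again --- the admissible thresholds $\bar t(q_j)$ are defined through different curves at each $q_j$ and enjoy no lower semicontinuity --- and nothing in your argument rules out an infinite regress of stalls with the distance to $w$ bounded away from zero. The missing ingredient is a device that converts ``every point of $B$ at positive distance from $w$ admits a distance-decreasing step'' into ``$\inf_{q\in B\cap\bar U}\Vert q-w\Vert=0$'' without assuming attainment. The clean fix is Ekeland's variational principle applied to $f(q)=\Vert q-w\Vert$ on the complete metric space $B\cap\bar U$: for every $\varepsilon>0$ it yields $q_\varepsilon\in B$ with $f(q_\varepsilon)\leq\inf f+\varepsilon$ and $f(q)\geq f(q_\varepsilon)-\sqrt{\varepsilon}\,\Vert q-q_\varepsilon\Vert$ for all $q\in B\cap\bar U$; testing this inequality along a curve in $B$ issued from $q_\varepsilon$ with velocity close to $w-q_\varepsilon$ gives $\Vert q_\varepsilon-w\Vert\leq C\sqrt{\varepsilon}\,\Vert q_\varepsilon-w\Vert+o(1)$, hence $\inf f=0$ and $w\in B$ by closedness. (Alternatively, your greedy scheme can be pushed through by transfinite recursion: your total-variation bound shows that limits exist at countable limit ordinals and stay in $B$, and a transfinite sequence of reals that strictly decreases at every successor must terminate, forcing the distance to reach zero.) Either way, an additional idea beyond your two-case analysis is genuinely required.
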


This lemma has been proved in \cite{DS}, and rediscovered in \cite[Theorem D and Corollary A.2]{HeintzeLiu_AM1999} (where the proof is more readable).
Since the closure of an orbit is an union of orbits, the result follows.
\end{proof}

A much more general result can be inferred from \cite{AC} in the case where $M$ is compact.

\begin{proposition}\label{propAC}
We assume that $M$ is compact, and that
$$
\left\lbrace \sum_{i=1}^r u_i X_i\mid u_1,\dots,u_r\in \mathcal{C}^\infty(M) \right\rbrace\subset\mathcal{H}_e,
$$
where $X_1,\dots,X_r$ are smooth vector fields on $M$, such that any two points $x$ and $y$ of $M$ can be connected by a smooth curve $x(\cdot)$ on $M$ whose velocity belongs to $\Delta = \mathrm{Span}\{ X_iÊ\mid i=1,\dots,r\}$ almost everywhere.
Then $\mathcal{R}(e)$ contains the set $\mathcal{D}^\infty(M)$ of all smooth diffeomorphisms of $\mathcal{D}^s(M)$. In particular, it is dense in $\mathcal{D}^s(M)$.
\end{proposition}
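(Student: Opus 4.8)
The plan is to reduce the statement to the controllability theorem of Agrachev and Caponigro \cite{AC}, which is exactly designed to pass from controllability of a finite-dimensional control system on $M$ to controllability in the group of diffeomorphisms. First I would record the algebraic reduction. Recall from Section \ref{sec2} that $\mathcal{R}(e)=\{\varphi\in\mathcal{D}^s(M)\mid d_{SR}(e,\varphi)<+\infty\}$ is a subgroup of $\mathcal{D}^s(M)$, stable under composition and inversion. Every vector field of the form $\sum_{i=1}^r u_i X_i$ with $u_i\in\mathcal{C}^\infty(M)$ belongs to $\mathcal{H}_e$ by hypothesis, and more generally every time-dependent field $t\mapsto\sum_i u_i(t,\cdot)X_i$ lies in $L^2(0,1;\mathcal{H}_e)$; its flow is therefore a horizontal curve, and its time-$1$ map belongs to $\mathcal{R}(e)$. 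Consequently $\mathcal{R}(e)$ contains the subgroup $G$ of $\mathcal{D}^\infty(M)$ generated by such endpoints (note that $G\subset\mathcal{D}^\infty(M)$, since the flow of a vector field that is smooth in space stays a smooth diffeomorphism), so it suffices to prove that $G=\mathcal{D}^\infty(M)$.

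Next I would invoke \cite{AC}. The hypothesis is precisely that the finite-dimensional control system $\dot{x}(t)=\sum_{i=1}^r u_i(t)X_i(x(t))$ is controllable on the compact manifold $M$, i.e.\ any two points are joined by a horizontal path. The theorem of Agrachev and Caponigro asserts that this finite-dimensional controllability lifts to \emph{exact} controllability on the identity component of the diffeomorphism group: the group generated by the time-$1$ flows of the admissible (time-dependent) vector fields coincides with $\mathcal{D}^\infty(M)$. Applying this to the family $X_1,\dots,X_r$ (using in particular the space-dependent controls $u_i\in\mathcal{C}^\infty(M)$ supplied by the $\mathcal{C}^\infty(M)$-module structure of $\mathcal{H}_e$) gives $G=\mathcal{D}^\infty(M)$, hence $\mathcal{D}^\infty(M)\subset\mathcal{R}(e)$.

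Finally, the density assertion follows from the fact that $\mathcal{D}^\infty(M)$ is dense in $\mathcal{D}^s(M)$. Since $M$ is compact, $\mathcal{C}^\infty(M,M)$ is dense in $H^s(M,M)$; and because $\mathcal{D}^s(M)$ is open in $H^s(M,M)$ with the continuous inclusion $H^s\hookrightarrow\mathcal{C}^1$ (as $s>d/2+1$), any smooth map sufficiently close to a given $\varphi\in\mathcal{D}^s(M)$ is again a diffeomorphism lying in the same connected component, hence in $\mathcal{D}^\infty(M)$. Therefore $\mathcal{R}(e)\supset\mathcal{D}^\infty(M)$ is dense in $\mathcal{D}^s(M)$.

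The genuinely hard part of the argument is entirely contained in \cite{AC}: the passage from pointwise controllability on the finite-dimensional manifold $M$ to controllability in the infinite-dimensional group $\mathcal{D}^\infty(M)$. This rests on a saturation argument showing that iterated Lie brackets of the horizontal vector fields generate, near the identity, all the directions of $T_e\mathcal{D}^\infty(M)=\Gamma^\infty(TM)$, combined with the group structure to globalize the resulting local surjectivity. In our framework the only point to check is that the endpoints produced by \cite{AC} are genuinely realized by horizontal curves of our sub-Riemannian structure, which is immediate since the admissible fields all belong to $\mathcal{H}_e$; the remaining steps are the routine reductions above.
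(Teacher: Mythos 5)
Your proposal is correct and follows essentially the same route as the paper: both reduce the statement to the main theorem of Agrachev--Caponigro, which factors any smooth diffeomorphism as a finite composition of time-$1$ flows of fields $u_jX_{i_j}$ with $u_j\in\mathcal{C}^\infty(M)$, each of which is horizontal by the hypothesis on $\mathcal{H}_e$, and then conclude by the group (concatenation) structure of $\mathcal{R}(e)$ and the density of $\mathcal{D}^\infty(M)$ in $\mathcal{D}^s(M)$. Your additional details on the density step are fine but not part of the paper's (very short) argument.
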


\begin{proof}
The main result of \cite{AC} (which is actually slightly stronger) states that, if $M$ is compact and if any two points $x,y\in M$ can be connected by a smooth curve $x(\cdot)$ on $M$ whose velocity belongs to $\Delta$ almost everywhere, then there exists an integer $m\in\N$ such that, for every $\varphi\in \mathcal{D}^\infty(M)$, there exist functions $u_1,\dots,u_m$ on $M$, of class $\mathcal{C}^\infty$, and integers $i_1,\dots,i_m$ in the set $\{1,\dots,r\}$, such that $\varphi=\varphi^{u_1X_{i_1}}(1)\circ\dots\circ\varphi^{u_mX_{i_m}}(1)$.
The result follows.
\end{proof}

Proposition \ref{propAC} says that an exact reachability property for a given smooth sub-Riemannian manifold $(M,\Delta,g)$ with $M$ compact implies an approximate reachability property on $\mathcal{D}^s(M)$ endowed with the strong right-invariant sub-Riemannian structure induced by $\mathcal{H}_e$, provided that $\Delta\subset \mathcal{H}_e$.

This proposition can be applied in the framework of Example \ref{subriem}. A well known sufficient condition (which is necessary in the analytic case) on a connected manifold $M$ ensuring that any two points of $M$ can be joined by an horizontal curve is that the Lie algebra generated by the vector fields $X_1,\ldots,X_r$ coincides with the whole tangent space $TM$ (bracket-generating assumption). Under this slightly stronger assumption, we actually have an exact reachability result (see next section).

\subsection{Exact reachability}
Establishing exact reachability (i.e., $\mathcal{R}(e)=\mathcal{D}^s(M)$) is hopeless for general infinite-dimensional sub-Riemannian manifolds, unless one has specific assumptions. However, the proof of \cite{AC} can easily be generalized to the $H^s$ case, in the following context. 

Throughout the section, we make the additional assumption that the manifold $M$ is compact.

Let $r\in\N^*$, and let $X_1,\dots, X_r$ be smooth vector fields on $M$. The family $(X_1,\dots, X_r)$ induces a (possibly rank-varying) sub-Riemannian structure on $M$ (see \cite{Bellaiche}). Note that $\Delta = \mathrm{Span} \{ X_1,\dots, X_r \}$ being a rank-varying subbundle of $TM$ does not raise any problem (see \cite{ABCG} where sub-Riemannian manifolds are defined in a more general way). An horizontal curve $x(\cdot)$ on $M$ (also called $\Delta$-horizontal curve) is a curve whose velocity belongs to $\Delta$ almost everywhere.

\begin{theorem}\label{consagrachev}
We assume that %$M$ is compact and that
$$
\left\lbrace \sum_{i=1}^r u_i X_i\mid u_1,\dots,u_r\in H^s(M) \right\rbrace\subset\mathcal{H}_e,
$$
and that any two points $x$ and $y$ belonging to the same connected component of $M$ can be joined by a smooth horizontal curve $x(\cdot)$ on $M$.
Then $\mathcal{R}(e)=\mathcal{D}^s(M)$.
\end{theorem}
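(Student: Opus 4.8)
The plan is to exploit the group structure. As recalled in Section~\ref{sec2}, the reachable set $\mathcal{R}(e)=\{\varphi\mid d_{SR}(e,\varphi)<+\infty\}$ is a subgroup of $\mathcal{D}^s(M)$, and $\mathcal{D}^s(M)$ is a connected topological group. I would first reduce the theorem to the statement that $\mathcal{R}(e)$ contains an $H^s$-neighborhood of $e$. Indeed, once such a neighborhood $U\subset\mathcal{R}(e)$ exists, right-invariance makes $\mathcal{R}(e)$ open: for $\psi\in\mathcal{R}(e)$ the homeomorphism $R_\psi$ sends $U$ to a neighborhood $U\psi\subset\mathcal{R}(e)$ of $\psi$. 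Since right cosets are open, the complement of $\mathcal{R}(e)$ is a union of cosets and is open too, so $\mathcal{R}(e)$ is clopen, hence equal to the connected space $\mathcal{D}^s(M)$. I would also record at this stage that $\mathcal{D}^\infty(M)\subset\mathcal{R}(e)$ already: since $M$ is compact we have $\mathcal{C}^\infty(M)\subset H^s(M)$, so the hypothesis forces $\{\sum_i u_iX_i\mid u_i\in\mathcal{C}^\infty(M)\}\subset\mathcal{H}_e$ and Proposition~\ref{propAC} applies.

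To produce the neighborhood of $e$, I would follow the construction of \cite{AC}, carrying the Sobolev bookkeeping throughout. By the bracket-generating mechanism underlying \cite{AC} (the infinite-dimensional analogue of the Chow--Rashevski theorem), there exist an integer $N$, indices $i_1,\dots,i_N\in\{1,\dots,r\}$ and a \emph{smooth} reference control $\bar u=(\bar u_1,\dots,\bar u_N)$ such that the composite flow map
\begin{equation*}
\Psi(u_1,\dots,u_N)=\varphi^{u_1X_{i_1}}(1)\circ\cdots\circ\varphi^{u_NX_{i_N}}(1)
\end{equation*}
has surjective differential at $\bar u$. I would first check that $\Psi$, now viewed on the Hilbert space $(H^s(M))^N$, is of class $\mathcal{C}^1$ into $\mathcal{D}^s(M)$: each factor $u_j\mapsto\varphi^{u_jX_{i_j}}(1)$ is the time-one flow of the $H^s$ field $u_jX_{i_j}\in\mathcal{H}_e$ and depends smoothly on $u_j$ by the Cauchy--Lipschitz results recalled in Section~\ref{sec_defi}, while the compositions are controlled by the regularity of the maps $R_\psi$. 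Moreover every value $\Psi(u)$ lies in $\mathcal{R}(e)$, being a concatenation of flows of horizontal vector fields; in particular $\psi_0:=\Psi(\bar u)\in\mathcal{R}(e)$.

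The differential $\d\Psi(\bar u)\colon (H^s(M))^N\to T_{\psi_0}\mathcal{D}^s(M)\simeq\Gamma^s(TM)$ is given by the same formula as in the smooth category, in which each variation $\delta u_j$ enters linearly and \emph{undifferentiated}: after right-trivialization it reads as a sum of terms obtained by transporting the fields $\delta u_j\,X_{i_j}$ along the smooth reference flows generated by the $\bar u_jX_{i_j}$, and integrating in time. Since these integrands are products of $H^s$ functions with fixed smooth vector fields, the operator maps into $\Gamma^s(TM)$, and the surjectivity established in \cite{AC} in the smooth setting transfers, precisely because no derivative of a control is ever taken. Being a surjection onto a Hilbertable space, $\d\Psi(\bar u)$ splits and admits a bounded right inverse, so the inverse function theorem in Banach spaces makes $\Psi$ open at $\bar u$. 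Then $\mathcal{R}(e)\supset\mathrm{Image}(\Psi)$ contains a neighborhood of $\psi_0$; right-translating by $\psi_0^{-1}\in\mathcal{R}(e)$ gives a neighborhood of $e$ inside the subgroup $\mathcal{R}(e)$, and the first paragraph yields $\mathcal{R}(e)=\mathcal{D}^s(M)$.

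The main obstacle is exactly the transfer of surjectivity from the smooth (tame Fréchet) setting of \cite{AC} to the fixed-regularity space $H^s$. A naive realization of the iterated brackets $[X_i,uX_j]$ needed for bracket-generation would differentiate the $H^s$ control and drop to $H^{s-1}$, and iterating would exhaust all regularity. The point that makes the generalization go through --- and which I would stress --- is that the required Lie brackets are brackets of the \emph{smooth} fields $X_1,\dots,X_r$ alone, encoded once and for all in the smooth reference flow; the $H^s$ controls appear only as undifferentiated coefficients. Verifying this no-loss-of-derivative structure in the explicit expression for $\d\Psi(\bar u)$, and confirming that the range is all of $\Gamma^s(TM)$ rather than merely dense, is where the real work of the adaptation lies.
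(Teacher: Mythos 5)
First, a point of reference: the paper does not actually prove Theorem~\ref{consagrachev} --- it states explicitly that the proof is omitted, being ``a straightforward extension of the main result of \cite{AC}'' --- so your proposal is being measured against a strategy rather than a written argument. Your reduction (``$\mathcal{R}(e)$ is a subgroup, so it suffices to reach a neighborhood of $e$, and then openness plus connectedness finishes'') is exactly Step~1 of the paper's proof of the neighboring Theorem~\ref{ball}, and your central analytic observation --- that the controls enter the differential of $\Psi$ undifferentiated, multiplied against smooth fields transported by smooth reference flows, so no derivatives are lost --- is precisely the reason the extension to $H^s$ works. That part is right and well identified.

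There are, however, two genuine soft spots. (i) You justify the $\mathcal{C}^1$ dependence of $\Psi$ on $(u_1,\dots,u_N)$ by the Cauchy--Lipschitz results of Section~\ref{sec_defi}; but for $k=0$ (which is the case here, $\mathcal{H}_e\hookrightarrow\Gamma^s(TM)$) the general end-point mapping is only \emph{continuous}. The smoothness you need comes from the specific form of the controls: $(\varphi,u)\mapsto\sum_i u_i\,X_i\circ\varphi$ is smooth because only the fixed smooth fields $X_i$ are composed with $\varphi$ --- this is exactly Lemma~\ref{smoothpar} in the paper's appendix, and it is the device the paper introduces precisely to circumvent the $k=0$ obstruction. (ii) More seriously, the existence of a smooth reference control $\bar u$ at which $\d\Psi(\bar u)$ is surjective onto $\Gamma^s(TM)$ is the entire content of the theorem, and you both assert it by appeal to ``the bracket-generating mechanism underlying \cite{AC}'' and then admit it is ``where the real work lies.'' Two problems: the hypothesis here is only joinability by horizontal curves, which is strictly weaker than bracket generation (under bracket generation the paper proves the stronger Theorem~\ref{ball} by an explicit commutator construction), so Lie brackets are not the right tool; and \cite{AC} does not in fact establish surjectivity of such a differential --- its proof proceeds by a different factorization argument. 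The correct route is Sussmann's Orbit Theorem: since every orbit is a whole connected component, $T_xM$ is spanned at each $x$ by pushforwards $\Phi_*X_i$ along finite compositions $\Phi$ of flows of the $X_j$; compactness of $M$ extracts a finite spanning family; interleaving the corresponding words into a single reference word makes these pushforwards appear as the partial products $\psi_j$ in your formula, so that $\mathrm{Range}(\d\Psi(\bar u))$ is the $H^s(M)$-module generated by finitely many smooth fields spanning $TM$ pointwise, hence all of $\Gamma^s(TM)$ (closed, not merely dense). Without this step your argument is a plan, not a proof.
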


Note that we do not need to assume that $M$ is connected in this result.

\begin{remark} 
Since the proof provided in \cite{AC} is compatible with the ILH structure of $\mathcal{D}^\infty(M)$ (inverse limit of $\mathcal{D}^s(M)$ as $s\rightarrow+\infty$), we infer that the above exact reachability is true as well on $\mathcal{D}^\infty(M)$ when taking the $u_i$'s in $\mathcal{C}^\infty(M)$. In particular, we recover Proposition \ref{propAC}.
\end{remark}

We not not provide a proof of that result, which is a straightforward extension of the main result of \cite{AC}. 

Actually, under the slightly stronger assumption that the vector fields $X_1,\dots,X_r$ are bracket-generating, we derive hereafter a stronger result, establishing not only that $\mathcal{R}(e)=\mathcal{D}^s(M)$, but also that the topology induced by the sub-Riemannian distance coincides with the intrinsic manifold topology of $\mathcal{D}^s(M)$; and this, with a proof that is much simpler and shorter than the one of \cite{AC}.

The family $(X_1,\dots,X_r)$ is said to be bracket-generating if
$$
T_xM=\mathrm{Span}\left\lbrace [X_{i_1}[\dots,[X_{i_{j-1}},X_{i_j}]\dots](x) \mid   j\in\N^*,\ 1\leq i_1,\dots,i_j\leq r   \right\rbrace,
$$
for every $x\in M$. This means that any tangent vector at $x\in M$ is a linear combination of iterated Lie brackets of $X_1,\dots,X_k$, in other words, if $TM=\mathrm{Lie}(\Delta)$.
Under this assumption, {and assuming that $M$ is connected,} the well-known Chow-Rashevski theorem (see \cite{Bellaiche,MBOOK}) states that any two points of $M$ belonging to the same connected component of $M$ can be joined by an horizontal curve for the sub-Riemannian structure on $M$.%, that is a curve whose velocity is almost everywhere a linear combination of the vector fields $X_1,\dots,X_r$.
\footnote{The converse is only true for analytic vector fields: if $M$ and the vector fields $X_i$'s are analytic, and if any two points of $M$ can be connected by a horizontal curve, then the $X_i$'s are bracket-generating.}

For any given family of indices $I=(i_1,\dots,i_j)$ with $ 1\leq i_1,\dots,i_j\leq r$, we denote
\begin{equation}\label{cro}
X_I=[X_{i_1}[\dots,[X_{i_{j-1}},X_{i_j}]\dots].
\end{equation}
{Since} $M$ is compact, the family $(X_1,\dots,X_r)$ is bracket-generating if and only if there exists a fixed family of indices $I_1,\dots,I_m$, with $I_l=(i_1^l,\dots,i_{j^l}^l)\subset \{1,\dots,r\}^{j^l}$, such that $T_xM=\mathrm{Span}(X_{I_1}(x),\dots,X_{I_m}(x))$ for every $x\in M$. Now let
\begin{equation}\label{defHeSR}
\mathcal{H}_e=\{X\in\Gamma^s(TM)\mid\forall x\in M \quad X(x)\in \mathrm{Span}(X_1(x),\dots,X_r(x))\},
\end{equation}
on which we define a Hilbert product $\left\langle\cdot,\cdot\right\rangle$ whose norm is equivalent to the $H^s$ norm \eqref{sobnorm} (see Section \ref{sec_defi}). Note that $\mathcal{H}_e$ is an $H^s(M)$-module generated by $X_1,\dots, X_r$, so that any $X\in \mathcal{H}_e$ can be written as
$$
X=\sum_{i=1}^ru^iX_i,\quad u=(u^1,\dots,u^r)\in H^s(M,\R^r),
$$
and we have
$$
\left\langle X,X\right\rangle\leq \sum_{i=1}^r\Vert u^i\Vert_{H^s(M)}^2.
$$
We consider the strong right-invariant sub-Riemannian structure on $\mathcal{D}^s(M)$ induced by the Hilbert space $(\mathcal{H}_e,\left\langle\cdot,\cdot\right\rangle)$ defined by \eqref{defHeSR}. The corresponding sub-Riemannian distance is denoted by $d_{SR}$. Note that
$$
\mathcal{H}_\varphi=\left\lbrace\sum_{i=1}^r (u^i\circ\varphi) \, X_i\circ\varphi\mid u=(u^1,\dots,u^r)\in H^s(M,\R^r) \right\rbrace,
$$
for every $\varphi\in \mathcal{D}^s(M)$.

\begin{remark}
Under the assumption that the family $(X_1,\ldots,X_r$) is bracket-generating, we have
$$
\Gamma^s(TM)=\left\lbrace\sum_{i=1}^m u^iX_{I_i} \mid u=(u^1,\dots,u^m)\in H^s(M,\R^m)\right\rbrace.
$$
\end{remark}

\begin{theorem}\label{ball}
We assume that %$M$ is compact 
the family $(X_1,\dots,X_r)$ is bracket-generating. Then there exist $C>0$, a neighborhood $\mathcal{U}$ of $0$ in $H^{s}(M,\R^m)$, and a $\mathcal{C}^1$-submersion $\phi:\mathcal{U}\rightarrow \mathcal{D}^s(M)$, with $\phi(0)=e$, such that
\begin{equation}
\label{inegballbox}
d_{SR}(e,\phi(u_1,\dots,u_m))\leq C\sum_{i=1}^m \Vert u_i\Vert_s^{1/j^i}.
\end{equation}
As a consequence, we have $\mathcal{R}(e)=\mathcal{D}^s(M)$, and the topology induced on $\mathcal{D}^s(M)$ by the sub-Riemannian distance $d_{SR}$ coincides with the intrinsic manifold topology of $\mathcal{D}^s(M)$. 
\end{theorem}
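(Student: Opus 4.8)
The plan is to decouple the two claims: I will first exhibit a single smooth (in particular $\mathcal{C}^1$) submersion $\phi$ whose image is a neighbourhood of $e$ for the manifold topology, and then bound the sub-Riemannian distance to the points of that image by constructing explicit horizontal connecting curves. For the submersion I would take
\[
\phi(u_1,\dots,u_m)=\varphi^{u^1X_{I_1}}(1)\circ\cdots\circ\varphi^{u^mX_{I_m}}(1),
\]
the composition of the time-one flows of the (generally non-horizontal) fields $u^iX_{I_i}$. Each factor equals $e$ at $u=0$, and $v\mapsto\varphi^{vX_{I_i}}(1)$ is $\mathcal{C}^1$ with differential $\delta v\mapsto\delta v\,X_{I_i}$ at $v=0$, so by the chain rule $\d\phi_0(\delta u)=\sum_{i=1}^m\delta u^iX_{I_i}$. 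By the remark preceding the statement, the image of this map is exactly $\Gamma^s(TM)=T_e\mathcal{D}^s(M)$; hence $\d\phi_0$ is onto and, after shrinking $\mathcal{U}$, $\phi$ is a $\mathcal{C}^1$ submersion with $\phi(0)=e$.

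It then remains to prove \eqref{inegballbox}. Since $d_{SR}$ is right-invariant and concatenations of horizontal curves are horizontal with additive length, one has $d_{SR}(e,g_1\circ\cdots\circ g_m)\leq\sum_{i=1}^m d_{SR}(e,g_i)$: concatenate a (near-minimizing) curve for $g_m$, then its right-translate by $g_m$ of a curve for $g_{m-1}$, and so on. Thus \eqref{inegballbox} reduces to the key estimate
\[
d_{SR}\big(e,\varphi^{vX_I}(1)\big)\leq C\,\Vert v\Vert_s^{1/j}
\]
for every iterated bracket $X_I$ of length $j$ and every small $v\in H^s(M)$, which I would prove by induction on $j$. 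For $j=1$ the field $vX_i$ lies in $\mathcal{H}_e$, so the horizontal curve $t\mapsto\varphi^{vX_i}(t)$ has constant logarithmic velocity $vX_i$ and length $\leq C\Vert v\Vert_s$. For the inductive step, write $X_I=[X_i,X_J]$ with $X_J$ of length $j-1$ and use the flow-commutator word obtained by alternately flowing along $\lambda X_i$ and $\lambda^{-1}vX_J$: to leading order it produces $\varphi^{vX_I}(1)$, its two $X_i$-legs cost $O(\lambda)$, while by the inductive hypothesis its two $X_J$-legs cost $O\big((\Vert v\Vert_s/\lambda)^{1/(j-1)}\big)$. Balancing the two contributions yields the optimal scale
\[
\lambda=\Vert v\Vert_s^{1/j},
\]
and a total length $\leq C\Vert v\Vert_s^{1/j}$. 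The main obstacle lies precisely here: because the coefficient $v$ is a function rather than a scalar, the commutator word realizes $\varphi^{vX_I}(1)$ only up to \emph{(i)} lower-order bracket terms whose coefficients involve derivatives of $v$ (e.g.\ a term in $X_i(v)\,X_J$) and \emph{(ii)} higher-order flow remainders. These must be corrected and controlled in the $H^s$ topology; I expect to reabsorb the lower-order terms through the inductive hypothesis (they carry strictly shorter brackets) and to kill the remainder by a local openness/implicit-function argument, the delicate point being the bookkeeping of the loss of derivatives.

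Finally I would deduce the two consequences. By \eqref{inegballbox} every $\phi(u)$ satisfies $d_{SR}(e,\phi(u))<\infty$, and since a $\mathcal{C}^1$ submersion is an open map, $\phi(\mathcal{U})$ is a manifold-neighbourhood of $e$ contained in $\mathcal{R}(e)$. As $\mathcal{R}(e)$ is a subgroup of $\mathcal{D}^s(M)$ (Section~\ref{sec2}) and right-translations are homeomorphisms, $\mathcal{R}(e)$ is open, hence also closed as a union of cosets; since $\mathcal{D}^s(M)$ is connected, $\mathcal{R}(e)=\mathcal{D}^s(M)$. For the topologies, the remark following Theorem~\ref{complete} gives $d_{SR}\geq C'\,d_{\mathrm{Riem}}$ with $d_{\mathrm{Riem}}$ inducing the manifold topology, so the $d_{SR}$-topology is at least as fine. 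Conversely, choosing a $\mathcal{C}^1$ local section $\sigma$ of the submersion $\phi$ with $\sigma(e)=0$, the estimate \eqref{inegballbox} gives $d_{SR}(e,\varphi)\leq C\sum_i\Vert\sigma(\varphi)_i\Vert_s^{1/j^i}\to0$ as $\varphi\to e$ in the manifold topology, and right-invariance propagates this to every point; hence the manifold topology is also at least as fine, and the two topologies coincide.
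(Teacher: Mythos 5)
Your outer framework (the subgroup reduction, the openness of a $\mathcal{C}^1$ submersion, and the two-sided topology comparison) matches the paper's Steps~1 and~6. The core of the construction, however, has two genuine gaps, both instances of the same loss-of-derivatives phenomenon that the paper's proof is specifically engineered to avoid. First, you build $\phi$ from the time-one flows $v\mapsto\varphi^{vX_I}(1)$ of the right-invariant vector fields $\varphi\mapsto(vX_I)\circ\varphi$ and assert that each factor is $\mathcal{C}^1$ in $v\in H^s(M)$ with differential $\delta v\mapsto \delta v\,X_I$ at $0$. This is not available here: for $k=0$ the parametrization $(\varphi,X)\mapsto X\circ\varphi$ is only continuous, and linearizing $\dot\varphi=(vX_I)\circ\varphi$ in $v$ produces the term $\bigl(\d(vX_I)\circ\varphi\bigr)\cdot\delta\varphi$, which carries $\d v\in H^{s-1}$ and therefore leaves $H^s$. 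This is exactly why the remark after the theorem warns that for $k=0$ ``the end-point mapping is then only continuous'', and why Step~2 of the paper (Lemma~\ref{smoothpar}) replaces your flows by flows of the fields ${\bf X}^u(\varphi)=\sum_i u^iX_i\circ\varphi$, in which the coefficients $u^i$ are \emph{not} composed with $\varphi$: the only $\varphi$-dependence is then through the smooth $X_i$, so these fields, their flows, and their Lie brackets are smooth in $(u,\varphi)$.

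Second, your key estimate $d_{SR}(e,\varphi^{vX_I}(1))\leq C\Vert v\Vert_s^{1/j}$ is precisely the step you leave open, and the obstacles you name cannot be absorbed as you hope: the correction terms carry derivatives of $v$ (a loss of one derivative in $H^s$, not a shorter bracket that the inductive hypothesis covers), and the proposed implicit-function argument to kill the remainder would again require the $\mathcal{C}^1$ regularity that is missing. The paper never proves such an estimate, because it never needs to realize $\varphi^{vX_I}(1)$ exactly by a horizontal curve: it \emph{defines} $\phi_k(u,\cdot)$ as the endpoint of the commutator word $\Phi_{I_k}$ of horizontal flows with rescaled coefficients, so that the length bound \eqref{trlength} is automatic from the construction (Remark~\ref{boxing}) and there is no remainder to correct. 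The only thing left to prove is the first-order expansion $\phi_k(u,\varphi)=\varphi+uX_{I_k}\circ\varphi+\mathrm{o}(\Vert u\Vert_{H^s})$, obtained from smoothness of the word in $u$ together with the homogeneity ${\bf X}^{tu}=t\,{\bf X}^u$ (Lemma~\ref{taylexp}). Crucially, in that framework $[{\bf X}_i^u,{\bf X}_j^v](\varphi)=uv\,[X_i,X_j]\circ\varphi$ with no derivatives of $u$ or $v$ appearing, because the coefficients are frozen functions of the label $x$ rather than of the moving point; the lower-order bracket terms you are worried about simply never arise. Repairing your argument essentially forces this change of parametrization.
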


Theorem \ref{ball} is proved in Appendix \ref{appendix_proofball}.
%
%It is interesting to note that, as in Proposition \ref{prop_dense}, it is not necessary to assume that $M$ is connected.
%This is in contrast with Theorem \ref{consagrachev} (see also the main result in \cite{AC}) where $M$ must be connected (as a consequence of the assumption).
%Here, the assumption in Theorem \ref{ball} is both stronger and weaker than the assumption in Theorem \ref{consagrachev}. It is stronger, in the sense that, \emph{if $M$ is connected}, then the bracket-generating assumption implies that any two points of $M$ can be connected by a $\Delta$-horizontal curve on $M$. It is weaker, in the sense that we do not need to assume that $M$ is connected.
%
%Actually, our proof of Theorem \ref{ball} shows that the bracket-generating assumption, applied to any connected component of the manifold $M$, implies the controllability on $\mathcal{D}^s(M)$, which is the connected component of $e$ in the set of diffeomorphisms on $M$ of Sobolev class $H^s$. Indeed, since the curves on $\mathcal{D}^s(M)$ that we consider start from $e$ at time $t=0$, it suffices to consider each of the connected components of $M$, separately, and then each of them is invariant under the diffeomorphisms that we build.

\begin{remark} 
Note that, as discussed in Remark \ref{rem_sing}, it is necessary to assume that $k=0$ in order to obtain exact reachability (for $k\geq 1$ we {\em never} have exact reachability).
But the fact that $k=0$ causes some difficulties in the proof, in particular because the end-point mapping is then only continuous. To overcome this problem, in the proof we use an equivalent sub-Riemannian structure which is smooth but not right-invariant.
\end{remark}

\begin{remark}
As it easily follows from our proof, $\phi$ is actually an ILH (``Inverse Limit Hilbert") submersion of class $\mathcal{C}^1$. This means that its restriction to $H^{s+k}(M,\R^m)$ is a submersion onto $\mathcal{D}^{s+k}(M)$ at $e$, for every $k\in\N$. In particular, this fact remains true when restricted to the inverse limits $\mathcal{C}^\infty(M,\R^m)$ and $\mathcal{D}^\infty(M)$, and we recover the main result of \cite{AC} for bracket-generating distributions.
Although the exact reachability with controls in $H^s$ is a straightforward generalization of \cite{AC} (as already said), the result concerning the induced topology is new and cannot be deduced from the proof of \cite{AC}. Theorem \ref{ball} also provides a generalization of (half of) the ball-box theorem to infinite-dimensional sub-Riemannian geometry (see, e.g., \cite{MBOOK} for the classical ball-box theorem in finite dimension).
Establishing the converse inequality (``second half" of the ball-box theorem) is an open problem in infinite dimension, and does not seem to be straightforward. Indeed, the classical proof in finite dimension uses the concept of privileged coordinates (\cite{Bellaiche,MBOOK}), which seems hard to generalize to our case.
\end{remark}

\begin{remark}\label{weaktopo}
The proof of Theorem \ref{ball} can easily be generalized to weak and non-right-invariant metrics (in which case the sub-Riemannian topology is coarser than the intrinsic one). As a consequence, we expect that estimates similar to \eqref{inegballbox} can be established, when restricting the weak Riemannian metric (which is \textit{not} right-invariant) from \cite{EM} to $\mathcal{H}^s$, that is,
$$
d_{SR}(e,\phi(u_1,\dots,u_m))\leq C\sum_{i=1}^m \Vert u_i\Vert_{L^2}^{1/j^i}.
$$
%Such estimates could then be used to compare Wasserstein metrics  on volume forms that come from  Riemannian and  sub-Riemannian costs.
\end{remark}

\begin{remark}
We stress again that, in the present section, we have taken a specific definition of $\mathcal{H}_e$. In more general cases, only a dense subset can be reached from a given point using horizontal curves, and the topologies do not coincide. Indeed, the bracket-generating condition is an algebraic condition, and therefore, in infinite dimension, the space generated by linear combinations of brackets of horizontal vector fields is only dense in the tangent space of the manifold. 
\end{remark}

\begin{remark}
Theorem \ref{ball} generalizes some results established in \cite{BV} in the ``Riemannian case" $\mathcal{H}_e=\Gamma^s(TM)$.
\end{remark}

\begin{remark}
A curve $\varphi(\cdot)\in H^1(0,1;\mathcal{D}^s(M))$ is horizontal for the strong right-invariant sub-Riemannian structure on $\mathcal{D}^s(M)$ induced by the Hilbert space $(\mathcal{H}_e,\left\langle\cdot,\cdot\right\rangle)$ defined by \eqref{defHeSR}, if and only if, for every $x\in M$, the curve $t\mapsto \varphi(t,x)\in M$ is horizontal for the (finite-dimensional) sub-Riemannian structure generated on $M$ by the family $(X_1,\dots,X_r)$.

It is then interesting to provide an interpretation in terms of classical transport equations and of particle flow.
Denoting by $\mathrm{div}$ the divergence operator associated with the canonical Riemannian volume of $M$, it is well known by the DiPerna-Lions theory (see \cite{Ambrosio,DiPernaLions}) that, for any given time-dependent Lipschitz vector field $X(\cdot):[0,1]\rightarrow\Gamma(TM)$ with time-integrable Lipschitz coefficients, generating a flow $\varphi(\cdot)$ on $M$, for every $\mu_0\in \mathcal{P}(M)$, the transport equation
$$
\partial_t\mu(t)+\mathrm{div}(\mu(t) X(t))=0,
$$
has a unique (measure) solution in $C^0(\R,\mathcal{P}(M))$ such that $\mu(0)=\mu_0$, given by $\mu(t)=\varphi(t)_*\mu_0$, which is the image measure (pushforward) of $\mu_0$ under $\varphi(t)$. Here, $\mathcal{P}(M)$ is the set of probability measures on $M$. In this context, for every $x\in M$, the curve $t\mapsto \varphi(t,x)$ is usually called a \textit{particle} (starting at $x$), and the flow $\varphi(\cdot)$ is often referred to as the \textit{particle flow}.

If one considers time-dependent vector fields $X(\cdot)$ that are, for almost every time, a linear combination (with time-integrable Lipschitz coefficients) of the vector fields $X_1,\ldots,X_r$, then the particle flow $\varphi(\cdot)$ is horizontal for the sub-Riemannian structure on $\mathcal{D}^s(M)$ considered above, and the particles are exactly the horizontal curves of $M$.

In this context, it is then easy to derive, as a consequence of Theorem \ref{ball}, a controllability theorem for controlled transport PDE's, of the form
$$\partial_t\mu(t)+\sum_{i=1}^r \mathrm{div}(\mu(t) u_i(t)X_i)=0,\quad u_i(t)\in H^s(M).$$
More precisely, one has controllability in the space of absolutely continuous probability measures. But this fact is exactly equivalent to a version of the famous Moser theorem on volume forms, that we are going to explore in more details.
Hence, this remark makes the transition with further considerations on what can be done with  horizontal flows of diffeomorphisms. In the next section we are going to revisit the Moser trick in the context of sub-Riemannian geometry.
\end{remark}

\subsection{Moser theorems with horizontal flows}\label{sec_Moser}
In this section, we provide some applications to the ``horizontal" transport of symplectic forms and of volume forms on a compact manifold. 

As in the previous section, we assume that the manifold $M$ is Riemannian and compact. The canonical Riemannian measure is denoted by $dx_g$.
Let $r\in\N^*$, and let $X_1,\dots, X_r$ be smooth vector fields on $M$. We denote, as previously, $\Delta = \mathrm{Span} \{ X_1,\dots, X_r \}$.
We consider on $\mathcal{D}^s(M)$ the strong right-invariant sub-Riemannian structure induced by the Hilbert space $(\mathcal{H}_e,\left\langle\cdot,\cdot\right\rangle)$ defined by \eqref{defHeSR}.

\begin{theorem}\label{thmMoser}
We assume that $s>d/2+1$, and that any two points of $M$ can be joined by a $\Delta$-horizontal curve.
\begin{enumerate}
\item Let $\mu_0$ and $\mu_1$ be two volume forms with respective densities $f_0$ and $f_1$ of class $H^{s-1}$ on $M$. If $\int_Mf_0(x)\, \d x_g=\int_Mf_1(x)\, \d x_g$, then there exists an horizontal curve $\varphi(\cdot)\in H^1(0,1;\mathcal{D}^{s}(M))$ such that $\varphi(0)=e$ and $\varphi(1)_*\mu_0=\mu_1$.
\item Let $\omega_0$ and $\omega_1$ be two symplectic forms on $M$, with coefficients of class $H^{s-1}$. If $\omega_0$ and $\omega_1$ belong to the same connected component of the same cohomology class, then there exists an horizontal curve $\varphi(\cdot)\in H^1(0,1;\mathcal{D}^{s}(M))$ such that $\varphi(0)=e$ and $\varphi(1)_*\omega_0=\omega_1$.
\end{enumerate}
This result remains true when $s=+\infty$.
\end{theorem}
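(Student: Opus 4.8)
The plan is to decouple the two independent ingredients underlying the statement. The \emph{first} is the classical Moser construction, which, under the stated mass (resp. cohomology) conditions, produces \emph{some} diffeomorphism of $M$ realizing the prescribed transport, but through an isotopy whose generating vector field is in general \emph{not} horizontal. The \emph{second} is the exact reachability already at our disposal: since $\mathcal{H}_e$ contains $\{\sum_{i=1}^r u_iX_i\mid u_i\in H^s(M)\}$ and any two points of $M$ are joined by a $\Delta$-horizontal curve, Theorem~\ref{consagrachev} gives $\mathcal{R}(e)=\mathcal{D}^s(M)$. The idea is therefore to build a classical Moser diffeomorphism $\psi\in\mathcal{D}^s(M)$ with $\psi_*\mu_0=\mu_1$ (resp. $\psi_*\omega_0=\omega_1$), and then simply reach $\psi$ from $e$ by a horizontal curve $\varphi(\cdot)$; since $\varphi(1)=\psi$, we get $\varphi(1)_*\mu_0=\mu_1$ at once. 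The horizontal curve so produced has nothing to do with the Moser isotopy: I would discard the Moser path and keep only its endpoint.

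For the volume case, first I would observe that the hypothesis forces $M$ to be connected, so that the top form $\mu_1-\mu_0$ (exact because $\int_M(f_1-f_0)\,dx_g=0$) can be written as $\mu_1-\mu_0=d\beta$ with $\beta$ one Sobolev order more regular than $f_1-f_0$, by Hodge/elliptic regularity. Setting $\mu_t=(1-t)\mu_0+t\mu_1$ and defining $Y(t)$ by $\iota_{Y(t)}\mu_t=-\beta$, the identity
\[
\dot\mu_t+\mathcal{L}_{Y(t)}\mu_t=(\mu_1-\mu_0)+d\iota_{Y(t)}\mu_t=0
\]
shows that the flow $\psi_t$ of $Y$ satisfies $(\psi_t)_*\mu_0=\mu_t$, so $\psi:=\psi_1$ transports $\mu_0$ to $\mu_1$. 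The Moser isotopy also exhibits $\psi$ as connected to $e$, hence $\psi\in\mathcal{D}^s(M)$, and applying $\mathcal{R}(e)=\mathcal{D}^s(M)$ yields the desired horizontal $\varphi(\cdot)$.

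For the symplectic case I would proceed identically, replacing the affine interpolation by a path $\omega_t$ of symplectic forms joining $\omega_0$ to $\omega_1$ inside the fixed cohomology class; such a path exists precisely because $\omega_0$ and $\omega_1$ lie in the same connected component of that class. Since $[\omega_t]$ is constant, $\dot\omega_t$ is exact, $\dot\omega_t=d\alpha_t$, and defining $Y(t)$ by $\iota_{Y(t)}\omega_t=-\alpha_t$ (using nondegeneracy of $\omega_t$) gives $\dot\omega_t+\mathcal{L}_{Y(t)}\omega_t=0$ via Cartan's formula, whence the Moser isotopy satisfies $(\psi_t)_*\omega_0=\omega_t$; again $\psi:=\psi_1\in\mathcal{D}^s(M)$ transports $\omega_0$ to $\omega_1$, and Theorem~\ref{consagrachev} reaches it horizontally. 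The case $s=+\infty$ is handled verbatim, using the smooth Moser theorem and the ILH exact reachability of $\mathcal{D}^\infty(M)$ recorded after Theorem~\ref{consagrachev} (equivalently Proposition~\ref{propAC}).

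The step I expect to be delicate is the \emph{regularity bookkeeping} in the classical Moser construction: that $H^{s-1}$ data (densities, resp. coefficients of the symplectic forms) indeed produce a transport diffeomorphism of class exactly $H^s$, so that $\psi$ genuinely lies in $\mathcal{D}^s(M)$ where reachability applies. The elliptic solve gains one derivative ($\beta$, resp. $\alpha_t$, is $H^s$), but contracting against the $H^{s-1}$ form $\mu_t$ returns a field $Y(t)$ of class only $H^{s-1}$, whose naive flow would land merely in $\mathcal{D}^{s-1}(M)$. Establishing that the flow in fact lands in $\mathcal{D}^s(M)$ is the standard but nontrivial gain-of-regularity phenomenon of the Sobolev Moser theorem, and is the one point where I would argue carefully --- e.g. through the submersion/implicit-function formulation of the transport map $\varphi\mapsto\varphi_*\mu_0$ on $\mathcal{D}^s(M)$ --- rather than by a crude flow estimate. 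Everything else (connectedness, exactness, and the final reachability step) is immediate.
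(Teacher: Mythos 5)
Your proposal is correct and follows essentially the same two-step route as the paper: run the classical Moser trick (linear interpolation for volume forms, a path within the cohomology class for symplectic forms, solving $\iota_{Y(t)}\mu_t=\eta$ by nondegeneracy) only to extract the endpoint diffeomorphism $\psi$, then discard the Moser isotopy and connect $e$ to $\psi$ by a horizontal curve using exact reachability. Two small points in your favor: invoking Theorem~\ref{consagrachev} rather than Theorem~\ref{ball} matches the theorem's stated hypothesis (joinability by $\Delta$-horizontal curves, not bracket-generation), and your caution about the regularity of $Y(t)$ is warranted --- the paper asserts $Y(t)\in H^s$ although contracting the $H^s$ primitive against the $H^{s-1}$ form a priori only yields $H^{s-1}$, so the submersion formulation of $\varphi\mapsto\varphi_*\mu_0$ on $\mathcal{D}^s(M)$ that you suggest is the right way to secure $\psi\in\mathcal{D}^s(M)$.
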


Theorem \ref{thmMoser} is a nonholonomic version of the usual well known Moser theorem (see \cite{Moser}). The first part has been proved in \cite{KhesinLee} for $s=+\infty$ (and the second part was conjectured in that reference).
The difference with the usual statement is twofold: first, our statement here is for diffeomorphisms of class $H^s$, for any $s$ large enough; second, in the Moser trick, we show here that the path of diffeomorphisms joining the initial (volume or symplectic) form to the target one can be chosen to be horizontal.

\begin{proof}
Let $\mu_0$ and $\mu_1$ be two volume forms of class $H^{s-1}$ having the same total volume $\alpha\geq 0$ (the proof works exactly in the same way for symplectic forms).
We denote by $\mathrm{Vol}^{s-1}_\alpha(M)$ the (convex) set of volume forms $f\, dx_g$ on $M$ such that $f\in H^{s-1}(M)$ and $\int_Mf\, dx_g=\alpha$.

The proof goes in two steps.

First, we use the standard Moser trick, without taking care of the horizontal condition. Let us recall this very classical method.
Let $Y(\cdot)=\dot\varphi(\cdot)\circ\varphi(\cdot)^{-1}\in L^2(0,1;\Gamma^{s}(M))$ be arbitrary, and let $\psi(\cdot)\in H^1(0,1;\mathcal{D}^{s}(M))$ be the unique solution of $\dot\psi(\cdot)=Y(\cdot)\circ\psi(\cdot)$ such that $\psi(0)=e$.
Let $\mu(\cdot)\in H^1(0,1;\mathrm{Vol}^{s-1}_\alpha(M))$ be an arbitrary path of volume forms. Using the Lie derivative of a time-dependent vector field, we have
$$\frac{d}{dt} \psi(t)^*\mu(t) = \psi(t)^* \left( L_{Y(t)}\mu(t) + \dot\mu(t)  \right),$$
and therefore, if $L_{Y(t)}\mu(t) + \dot\mu(t)=0$ for almost every $t\in[0,1]$ then $\mu(0)=\psi(1)^*\mu(1)$. 
We choose the linear path $\mu(t)=(1-t)\mu_0+t\mu_1$. Let us then search a time-dependent vector field $Y(\cdot)$ such that $L_{Y(t)}\mu(t)=-\dot\mu(t)=\mu_0-\mu_1$, for any fixed time $t$. Using the Cartan formula $L_Y=d\iota_Y+\iota_Y d$, the fact that $d\mu(t)=0$, and the fact that $\mu_0-\mu_1=d\eta$ for some $(n-1)$-form $\eta$ having coefficients of class $H^{s}$, it suffices to solve $\iota_{Y(t)}\mu(t)=\eta$, which has a solution $Y(t)$ of class $H^{s}$ because $\mu(t)$ is non-degenerate. The time-dependent vector field $Y(\cdot)$ generates a flow $\psi(\cdot)\in H^1(0,1;\mathcal{D}^{s}(M))$ such that $\psi(0)=e$, and with the above calculation we have $\mu_0=\psi(1)^*\mu_1$ and thus $\mu_1=\psi(1)_*\mu_0$.

Now, using Theorem \ref{ball}, there exists an horizontal curve $\varphi(\cdot)\in H^1(0,1;\mathcal{D}^s(M))$ such that $\varphi(0)=e$ and $\varphi(1)=\psi(1)$.
The conclusion follows.
\end{proof}

%\begin{remark}
%Using Theorem \ref{consagrachev}, we can weaken the bracket-generating assumption, and replace it with the assumption that any two points of $M$ can be connected with a curve whose velocity belongs to $\Delta$ almost everywhere.
%\end{remark}

\begin{remark}
It is interesting to provide an alternative proof of the first point of Theorem \ref{thmMoser}, in terms of a sub-Riemannian Laplacian, as in \cite{KhesinLee}, and which does not use the exact reachability result established in Theorem \ref{ball}.
The argument goes as follows.

In the proof above, we used an argument in two steps, the first of which being the classical Moser trick. But we could have tried to construct a time-dependent vector field $X(\cdot)\in L^2(0,1;\Gamma^{s}(M))$ such that $X(t)\in\Delta$ almost everywhere (this condition ensuring that the generated flow $\varphi(\cdot)\in H^1(0,1;\mathcal{D}^{s}(M))$ be horizontal) and such that $L_{X(t)}\mu(t) + \dot\mu(t)=0$ almost everywhere, for some appropriate path $\mu(\cdot)\in H^1(0,1;\mathrm{Vol}^{s-1}_\alpha(M))$ such that $\mu(0)=\mu_0$ and $\mu(1)=\mu_1$.

This can be done, by searching $X(t)$ in the form $X(t)=\nabla_{SR} F(t)$, where the so-called horizontal gradient for the (finite-dimensional) sub-Riemannian structure on $M$. Recall that the horizontal gradient $\nabla_{SR} F\in\Delta$ is defined by $g(\nabla_{SR}F,v)=dF.v$ for every $v\in\Delta$; if the vector fields $X_1,\ldots,X_r$ are locally orthonormal then $\nabla_{SR} F = \sum_{i=1}^r(L_{X_i}F)X_i$ (see \cite{MBOOK}).
Then, writing $d\mu(t)=f(t)\, dx_g$, the condition $L_{X(t)}\mu(t) = - \dot\mu(t)$ gives $L_{X(t)}d\mu(t) = - \dot f(t)\, dx_g$, and, denoting by $\mathrm{div}_\mu$ the divergence operator associated with the volume form $\mu$ (defined by $\mathrm{div}_\mu(X)\, d\mu = L_Xd\mu$ for any vector field $X$), we have to solve $\mathrm{div}_{\mu(t)}(X(t))=-\dot f(t)/f(t)$. Since we posit $X(t)=\nabla_{SR} F(t)$, this gives $\triangle_{\mu(t)}F(t)=-\dot f(t)/f(t)$, where $\triangle_{\mu(t)}$ is the \textit{sub-Riemannian Laplacian} associated with the volume form $\mu(t)$ and the metric $g$. It is well known that, under the bracket-generating assumption (also called \textit{H\"ormander assumption}), $-\triangle_{\mu(t)}$ is a subelliptic nonnegative selfadjoint operator with discrete spectrum $0=\lambda_1<\lambda_2<\cdots<\lambda_n<\cdots$ with $\lambda_n\rightarrow+\infty$.
In particular, since $\dot f(\cdot)/f(\cdot)\in L^2(0,1;H^{s-1}(M))$, it follows that there exists a solution $F(\cdot)\in L^2(0,1;H^{s-1}(M))$ (defined up to additive constant), and hence $X(\cdot)\in L^2(0,1;H^{s-2}(M))$ (at least).
Note that this reasoning gives, finally, a less precise result than in Theorem \ref{thmMoser} (where, anyway, it is not required to use an hypoelliptic Laplacian).
\end{remark}

\section{Conclusion}
In this paper, we have provided a framework in order to define and analyze a strong (infinite-dimensional) right-invariant sub-Riemannian structure on the group of diffeomorphisms of a (finite-dimensional) manifold. We have shown how certain results from the finite-dimensional case can be established in this new context (such as reachability properties), and we have also highlighted some important differences, one of them, of particular interest, being the occurence of what we have called elusive geodesics.
Such geodesics are due to a discrepancy between the manifold topology and the topology induced by the sub-Riemannian distance on the group of diffeomorphisms, the latter being finer (but it may not correspond to the topology of a manifold). Indeed, restricting the structure to a subgroup of more regular diffeomorphisms turns certain elusive geodesics into normal geodesics, by adding new covectors to be used as initial momenta. This raises the open question of whether one could find a set of covectors large enough to encapsulate all geodesics, so that there would be no elusive geodesic.

Another open problem is to prove that the converse inequality of \eqref{inegballbox} in Theorem \ref{ball} holds true as well. This might require the generalization of the concept of privileged coordinates to the infinite dimensional case.

Finally, we stress that, in the present paper, we have focused on strong sub-Riemannian structures. A lot of  interesting problems are open for weak sub-Riemannian geometries. Their study is harder because the Hamiltonian is not always well defined. This is a well-known problem in the study of weak Riemannian metrics (see \cite{BHM2} for example), and requires a case-by-case analysis. We hope that the framework that we have developed here can serve as a base in order to address new problems for weak sub-Riemannian structures, with many promising applications such as, using methods similar to those of \cite{EM}, the investigation of fluids with non-holonomic constraints.

\appendix
\section{Proof of Theorem \ref{ball}}\label{appendix_proofball}
We follow the method used in \cite{MBOOK} to prove the Chow-Rashevski theorem. Of course, since we are in infinite dimension, some new difficulties occur. The proof goes in 6 steps.

\paragraph{Step 1: Reduction to a neighborhood of $e=\mathrm{Id}_m$.}

\begin{lemma}
The reachable set $\mathcal{R}(e)$ is a subgroup of $\mathcal{D}^s(M)$.
\end{lemma}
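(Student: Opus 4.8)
The plan is to verify the three subgroup axioms for $\mathcal{R}(e)$ directly, using that $\mathcal{R}(e)$ coincides with $\{\varphi\in\mathcal{D}^s(M)\mid d_{SR}(e,\varphi)<+\infty\}$ and that horizontal curves are stable under right-translation and time-reversal, as was already noted after Theorem~\ref{complete}. First, the identity lies in $\mathcal{R}(e)$, since the constant curve $\varphi(t)\equiv e$ has logarithmic velocity $0\in\mathcal{H}_e$ and is therefore horizontal.

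To prove closure under composition, I would take $\varphi,\psi\in\mathcal{R}(e)$ together with horizontal curves $\varphi(\cdot),\psi(\cdot)\in H^1(0,1;\mathcal{D}^s(M))$ issued from $e$ with $\varphi(1)=\varphi$, $\psi(1)=\psi$ and logarithmic velocities $X(\cdot),Y(\cdot)\in L^2(0,1;\mathcal{H}_e)$. The crucial observation is that right-composition by a fixed diffeomorphism preserves the logarithmic velocity: the curve $t\mapsto\varphi(t)\circ\psi$ satisfies $\frac{\d}{\d t}(\varphi(t)\circ\psi)=(X(t)\circ\varphi(t))\circ\psi=X(t)\circ(\varphi(t)\circ\psi)$, so its logarithmic velocity is again $X(t)\in\mathcal{H}_e$, and it is horizontal, joining $\psi$ to $\varphi\circ\psi$. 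Concatenating $\psi(\cdot)$ with $t\mapsto\varphi(t)\circ\psi$ and reparametrizing onto $[0,1]$ would then yield a horizontal curve from $e$ to $\varphi\circ\psi$, so that $\varphi\circ\psi\in\mathcal{R}(e)$.

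For closure under inversion, I would consider, for $\varphi\in\mathcal{R}(e)$ with horizontal curve $\varphi(\cdot)$ and logarithmic velocity $X(\cdot)$, the time-reversed and right-translated curve $t\mapsto\varphi(1-t)\circ\varphi^{-1}$. It runs from $\varphi(1)\circ\varphi^{-1}=e$ to $\varphi(0)\circ\varphi^{-1}=\varphi^{-1}$, and its time-derivative equals $-\dot\varphi(1-t)\circ\varphi^{-1}=-X(1-t)\circ(\varphi(1-t)\circ\varphi^{-1})$, so its logarithmic velocity is $-X(1-t)\in\mathcal{H}_e$ (here using that $\mathcal{H}_e$ is a linear subspace). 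Hence this curve is horizontal and $\varphi^{-1}\in\mathcal{R}(e)$.

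Since all computations are elementary, I do not expect any genuine obstacle; the only point deserving care is the bookkeeping of logarithmic velocities, namely checking that both right-composition and time-reversal keep them inside the linear space $\mathcal{H}_e$. Alternatively, the same conclusion follows purely at the metric level: right-invariance in the form $d_{SR}(\varphi_0\circ\eta,\varphi_1\circ\eta)=d_{SR}(\varphi_0,\varphi_1)$ combined with the triangle inequality gives $d_{SR}(e,\varphi\circ\psi)\leq d_{SR}(e,\psi)+d_{SR}(\psi,\varphi\circ\psi)=d_{SR}(e,\psi)+d_{SR}(e,\varphi)<+\infty$ and $d_{SR}(e,\varphi^{-1})=d_{SR}(\varphi,e)=d_{SR}(e,\varphi)<+\infty$.
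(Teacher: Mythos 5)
Your proof is correct. The paper's own proof is a one-liner: for $\varphi,\psi\in\mathcal{R}(e)$, right-invariance of $d_{SR}$ and the triangle inequality give $d_{SR}(\varphi\circ\psi^{-1},e)=d_{SR}(\varphi,\psi)\leq d_{SR}(\varphi,e)+d_{SR}(\psi,e)<+\infty$, which handles closure under composition and inversion in a single stroke; this is exactly the ``alternative'' metric argument you sketch in your last paragraph. The main body of your proposal instead unwinds the curve-level facts (stability of horizontality under right translation by a fixed diffeomorphism, under concatenation, and under time reversal) that the paper had already recorded in Section~2.4 when asserting that $d_{SR}$ is a right-invariant semi-distance and that $\{\varphi\mid d_{SR}(e,\varphi)<\infty\}$ is a subgroup. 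So your version is more explicit and self-contained --- it makes visible exactly why the logarithmic velocities stay in $\mathcal{H}_e$ (in particular that $-X(1-t)\in\mathcal{H}_e$ uses linearity of $\mathcal{H}_e$), which is useful if one has not yet established right-invariance --- while the paper's version is shorter because it leans on properties of $d_{SR}$ already proved. Both are complete; no gap.
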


\begin{proof}
Let $\varphi,\psi\in \mathcal{R}(e)$. Since $d_{SR}$ is right-invariant, we have
$d_{SR}(\varphi\circ\psi^{-1},e)=d_{SR}(\varphi,\psi)\leq d_{SR}(\varphi,e)+d_{SR}(\psi,e)<+\infty$, and therefore $\varphi\circ\psi^{-1}\in \mathcal{R}(e)$.
\end{proof}

Using that lemma, to prove that $\mathcal{R}(e)=\mathcal{D}^s(M)$, it suffices to prove that $\mathcal{R}(e)$ contains a neighborhood of $e$. 
Moreover, since $d_{SR}$ is right-invariant, to prove that the topologies coincide, it suffices to prove that any sub-Riemannian ball centered at $e$ contains a neighborhood of $e$ for the intrinsic manifold topology of $\mathcal{D}^s(M)$ (this will imply as well that $\mathcal{R}(e)$ contains a neighborhood of $e$).

\paragraph{Step 2: Smooth parametrization of the horizontal distribution.} 
Let us recall that the parametrization of $\mathcal{H}^s$, given by
$$
(\varphi,X)\in \mathcal{D}^s(M)\times \mathcal{H}_e\mapsto X\circ\varphi\in T_\varphi\mathcal{D}^s(M),
$$
is only continuous. 
Because of that, it is not possible to compute in a ``blind way" Lie brackets of horizontal vector fields on $\mathcal{D}^s(M)$ (recall that a vector field $\textbf{X}:\mathcal{D}^s(M)\rightarrow TD^s(M)$ is \emph{horizontal} if $\textbf{X}(\varphi)\in \mathcal{H}_\varphi$, for every $\varphi\in \mathcal{D}^s(M)$).

\begin{remark}
In order to avoid any confusion between vector fields on $M$ and vector fields on $\mathcal{D}^s(M)$, we will write vector fields on the infinite-dimensional manifold $\mathcal{D}^s(M)$ with bold letters.
\end{remark}

To overcome the problem of the continuous parametrization of $\mathcal{H}^s$, and in view of computing Lie brackets (see Step 4 further), we will rather use the mapping defined in the following lemma, which provides a smooth parametrization (inspired from control theory).

\begin{lemma}\label{smoothpar}
The mapping
$$
\begin{array}{rcl}
\mathcal{D}^s(M)\times H^s(M,\R^r) & \longrightarrow & T\mathcal{D}^s(M) \\
(\varphi,u^1,\dots,u^r) & \longmapsto & \displaystyle {\bf X}^u(\varphi)=\sum_{i=1}^ru^i X_i\circ\varphi
\end{array}
$$
is smooth, and its image is equal to $\mathcal{H}^s$. In particular, any such ${\bf X}^u$ is a smooth horizontal vector field on $\mathcal{D}^s(M)$.
\end{lemma}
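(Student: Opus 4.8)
The plan is to factor the map through intermediate operations and to check smoothness of each separately, isolating the one place where the smoothness of the fixed vector fields $X_i$ is genuinely used. The crucial first operation is, for each fixed $i$, the map $\varphi\mapsto X_i\circ\varphi$. Although left composition is only continuous on $\mathcal{D}^s(M)$ in general, here the $X_i$ are smooth and $M$ is compact, so all their derivatives are bounded; hence post-composition by the fixed smooth map $X_i\colon M\to TM$ is \emph{smooth} as a map $H^s(M,M)\to H^s(M,TM)$ by the $\Omega$-lemma (see \cite{ES,S}). Since $\pi_M\circ X_i=\mathrm{id}_M$, we have $\pi_M\circ(X_i\circ\varphi)=\varphi$, so $X_i\circ\varphi\in T_\varphi\mathcal{D}^s(M)$; thus $\varphi\mapsto X_i\circ\varphi$ is a smooth map $\mathcal{D}^s(M)\to T\mathcal{D}^s(M)$ covering the identity. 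This is the only step that truly exploits the extra regularity of the $X_i$, and it is precisely what allows us to bypass the merely continuous parametrization $(\varphi,X)\mapsto X\circ\varphi$.

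Next I would handle the scalar multiplication and the summation. Since $s>d/2$, the space $H^s(M)$ is a Banach algebra and the pointwise multiplication $H^s(M)\times H^s(M,TM)\to H^s(M,TM)$, $(u^i,Z)\mapsto u^iZ$, is continuous bilinear, hence smooth. Composing the smooth map $\varphi\mapsto(X_1\circ\varphi,\dots,X_r\circ\varphi)$ from the first step with this multiplication and with the (linear, hence smooth) summation then yields the smoothness of $(\varphi,u)\mapsto {\bf X}^u(\varphi)=\sum_{i=1}^r u^i\,X_i\circ\varphi$.

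It remains to identify the image with $\mathcal{H}^s$. For the inclusion of the image into $\mathcal{H}^s$, given $u\in H^s(M,\R^r)$ and $\varphi\in\mathcal{D}^s(M)$, I would set $v^i=u^i\circ\varphi^{-1}\in H^s(M)$ (well defined since $\varphi^{-1}\in\mathcal{D}^s(M)$ and right composition preserves $H^s$); then $Y=\sum_i v^iX_i$ is an $H^s$ vector field with $Y(x)\in\mathrm{Span}(X_1(x),\dots,X_r(x))$, so $Y\in\mathcal{H}_e$, and ${\bf X}^u(\varphi)=\sum_i(v^i\circ\varphi)(X_i\circ\varphi)=Y\circ\varphi\in\mathcal{H}_e\circ\varphi=\mathcal{H}^s_\varphi$. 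For the reverse inclusion, any element of $\mathcal{H}^s$ has the form $Y\circ\varphi$ with $Y\in\mathcal{H}_e$; using that $\mathcal{H}_e$ is the $H^s(M)$-module generated by $X_1,\dots,X_r$ (as recorded just before the statement), I can write $Y=\sum_i v^iX_i$ with $v^i\in H^s(M)$, and then ${\bf X}^u(\varphi)=Y\circ\varphi$ for $u^i=v^i\circ\varphi\in H^s(M)$. Hence the image is exactly $\mathcal{H}^s$.

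The hard part is the smoothness assertion of the first step: one must be certain that post-composition by a fixed smooth vector field is a genuinely smooth—not merely continuous—operation between the relevant Sobolev mapping spaces. This is exactly where the $\Omega$-lemma and the compactness of $M$ (or, more generally, bounded geometry together with uniform bounds on the $X_i$ and their derivatives) enter, and it is the reason the statement fails for the general parametrization by arbitrary $H^s$ vector fields. The remaining ingredients—the Sobolev algebra property of $H^s(M)$ and the module decomposition of $\mathcal{H}_e$—are routine.
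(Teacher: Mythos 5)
Your argument is correct and follows essentially the same route as the paper's proof: smoothness is reduced to the smoothness of $\varphi\mapsto X_i\circ\varphi$ (which holds because the $X_i$ are fixed smooth vector fields), and the identification of the image with $\mathcal{H}^s$ uses the rewriting $u^i\,X_i\circ\varphi=((u^i\circ\varphi^{-1})X_i)\circ\varphi$ in one direction and the $H^s(M)$-module structure of $\mathcal{H}_e$ in the other. You simply make explicit (via the $\Omega$-lemma and the Banach algebra property of $H^s(M)$) what the paper declares ``clearly smooth.''
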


\begin{proof}
The mapping is clearly smooth, because in the sum $\sum_{i=1}^ru^i X_i\circ\varphi$, only the terms $X_i\circ\varphi$ depend on $\varphi$, and these terms are smooth with respect to $\varphi$ since the vector fields $X_i$ are smooth. Moreover, writing that $u^i X_i\circ\varphi =  ( (u^i\circ\varphi^{-1}) X_i ) \circ \varphi$, and noting that $u^i\circ\varphi^{-1} \in H^s(M)$, it follows that $
\sum_{i=1}^ru^i X_i\circ\varphi   \in \mathcal{H}_e\circ\varphi=\mathcal{H}_\varphi$, and therefore that any $ {\bf X}^u$ is an horizontal vector field on $\mathcal{D}^s(M)$. Conversely, any $Y_\varphi\in \mathcal{H}_\varphi$ can be written as $Y_\varphi=\sum_{i=1}^r (u^i\circ\varphi) X_i\circ\varphi={\bf X}^{u\circ\varphi}(\varphi)$. Hence the image of the mapping is equal to $\mathcal{H}^s$.
\end{proof}

\paragraph{Step 3: Length of integral curves of smooth horizontal vector fields.} 
It follows from Lemma \ref{smoothpar} that every vector field ${\bf X}^u$ generates a unique local flow $(t,\varphi,u)\mapsto\Phi(t,u,\varphi)$ on $\mathcal{D}^s(M)$, which is smooth. Moreover, any integral curve $t\mapsto\varphi(t)=\Phi(t,u,\varphi(0))$ of this flow is a smooth horizontal curve for the right-invariant sub-Riemannian structure on $\mathcal{D}^s(M)$ induced by $\mathcal{H}_e$. %The length of such a curve is estimated as follows.

\begin{lemma}\label{calcleng}
For every (small enough) open subset $\mathcal{U}$ of $\mathcal{D}^s(M)$, there exists $C>0$ such that
$$
\left\langle\sum_{i=1}^r (u_i\circ\varphi^{-1}) X_i,\sum_{i=1}^r (u_i\circ\varphi^{-1}) X_i\right\rangle\leq C\sum_{i=1}^r\Vert u_i\Vert_{H^s}^2,
$$
for every $u\in H^s(M,\R^r)$ and every $\varphi\in \mathcal{U}$.
Therefore the length of the curve $t\mapsto\Phi(t,u,\varphi)$ is bounded above by
$\displaystyle C \left( \sum_{i=1}^r\Vert u_i\Vert_{H^s}^2 \right)^{1/2}$.
\end{lemma}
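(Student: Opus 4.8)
The plan is to reduce the fibered norm on the left-hand side to an ordinary $H^s$ norm, and then to control the two elementary operations that appear in $\sum_{i=1}^r(u_i\circ\varphi^{-1})X_i$, namely pointwise multiplication by the fixed smooth vector fields $X_i$ and right-composition by $\varphi^{-1}$. First, I would observe that the vector field $\sum_{i=1}^r(u_i\circ\varphi^{-1})X_i$ belongs to $\mathcal{H}_e$, since it is of the form $\sum_i v^iX_i$ with $v^i=u_i\circ\varphi^{-1}\in H^s(M)$; because the Hilbert product on $\mathcal{H}_e$ defined in \eqref{defHeSR} has a norm equivalent to the $H^s$ norm, there is a constant $C_1>0$, independent of $u$ and $\varphi$, such that
$$
\left\langle\sum_{i=1}^r(u_i\circ\varphi^{-1})X_i,\sum_{i=1}^r(u_i\circ\varphi^{-1})X_i\right\rangle\leq C_1\left\Vert\sum_{i=1}^r(u_i\circ\varphi^{-1})X_i\right\Vert_{H^s}^2.
$$
By the triangle inequality it then suffices to bound each term $\Vert(u_i\circ\varphi^{-1})X_i\Vert_{H^s}$.

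Second, I would use that since $s>d/2+1$, the space $H^s(M)$ is a Banach algebra and, $M$ being compact, each $X_i$ is smooth with all derivatives bounded; hence multiplication by $X_i$ is a bounded operation on $H^s$ and there is $C_2>0$ with $\Vert(u_i\circ\varphi^{-1})X_i\Vert_{H^s}\leq C_2\Vert u_i\circ\varphi^{-1}\Vert_{H^s}$. It then remains to estimate the composition operator $R_{\varphi^{-1}}:f\mapsto f\circ\varphi^{-1}$ on $H^s(M,\R)$. For fixed $\varphi$ this is a bounded linear operator, and a standard Sobolev composition estimate bounds its operator norm by a locally bounded function $\Theta(\varphi)$ on $\mathcal{D}^s(M)$, depending on $\Vert\varphi^{-1}\Vert_{H^s}$ and on a lower bound for $\inf_M\vert\det\d\varphi^{-1}\vert$ (equivalently an upper bound for $\sup_M\vert\det\d\varphi\vert$). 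Combining the three estimates yields the claimed inequality with a constant $C$ depending only on $C_1$, $C_2$, $r$, and $C_3:=\sup_{\varphi\in\mathcal{U}}\Vert R_{\varphi^{-1}}\Vert$.

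Finally, for the length bound, I would note that the integral curve $\varphi(t)=\Phi(t,u,\varphi)$ has logarithmic velocity $X(t)=\dot\varphi(t)\circ\varphi(t)^{-1}=\sum_{i=1}^r(u_i\circ\varphi(t)^{-1})X_i$ by Lemma \ref{smoothpar}. Taking $\mathcal{U}$ so that the curve remains in $\mathcal{U}$ on $[0,1]$, the inequality just proved applies at each time with $\varphi=\varphi(t)$, so that
$$
L(\varphi(\cdot))=\int_0^1\sqrt{\langle X(t),X(t)\rangle}\,\d t\leq\int_0^1\sqrt{C}\left(\sum_{i=1}^r\Vert u_i\Vert_{H^s}^2\right)^{1/2}\d t,
$$
which is the announced bound after renaming the constant.

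The main obstacle is the uniform control of the composition operator norm $\Vert R_{\varphi^{-1}}\Vert$ over $\varphi\in\mathcal{U}$, i.e. the finiteness of $C_3$. In the infinite-dimensional manifold $\mathcal{D}^s(M)$ open sets are not relatively compact, so this uniformity cannot be obtained by a naive compactness (or Banach--Steinhaus) argument. Instead one must rely on the explicit Sobolev composition estimate, together with the smallness of $\mathcal{U}$ and the continuity of the inversion map $\varphi\mapsto\varphi^{-1}$ and of the embedding $H^s\hookrightarrow\mathcal{C}^1$: these ensure that $\Vert\varphi^{-1}\Vert_{H^s}$ and $\inf_M\vert\det\d\varphi^{-1}\vert$ stay in a fixed compact range over a small enough $\mathcal{U}$, whence $\sup_{\varphi\in\mathcal{U}}\Theta(\varphi)<+\infty$.
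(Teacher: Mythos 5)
Your argument is correct, and its overall shape matches the paper's: both reduce the claim, via the equivalence of the $\mathcal{H}_e$-norm with the $H^s$-norm and the boundedness of multiplication by the fixed smooth $X_i$, to a single nontrivial point, namely the local uniform boundedness of the right-composition operator $R_{\varphi^{-1}}$ on $H^s(M)$. Where you diverge is in how that uniformity is obtained. The paper argues abstractly: the map $(\varphi,u)\mapsto u\circ\varphi^{-1}$ is jointly continuous, and a jointly continuous map into $H^s(M,\R^r)$ that is linear in $u$ has locally bounded operator norm in $\varphi$ (citing Omori), even though $\varphi\mapsto R_{\varphi^{-1}}$ need not be continuous in operator norm. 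You instead invoke the quantitative Sobolev composition estimate, bounding $\Vert R_{\varphi^{-1}}\Vert$ by a function of $\Vert\varphi^{-1}\Vert_{H^s}$ and $\inf_M\vert\det\d\varphi^{-1}\vert$, and then use continuity of inversion in the topological group $\mathcal{D}^s(M)$ together with the embedding $H^s\hookrightarrow\mathcal{C}^1$ to keep these quantities in a compact range over a small $\mathcal{U}$. Your route is more self-contained and explicit (and correctly flags that a naive compactness or Banach--Steinhaus argument is unavailable in this infinite-dimensional setting, which is precisely why the paper resorts to local boundedness rather than continuity); the paper's route is shorter but leans on the cited abstract fact. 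Both are valid, and your treatment of the length bound via the logarithmic velocity $\sum_i(u_i\circ\varphi(t)^{-1})X_i$ agrees with what the paper leaves implicit.
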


\begin{proof}
The mapping from $\mathcal{D}^s(M)\times H^s(M,\R^r)$ to $H^s(M,\R^r)$, defined by $(\varphi,u)\mapsto u\circ\varphi^{-1}=R_{\varphi^{-1}}u$, is continuous. This implies (see \cite{O}) that the mapping $\varphi\mapsto R_{\varphi^{-1}}$, defined on $\mathcal{D}^s(M)$ with values in the space of continuous linear operators on $H^s(M,\R^r)$, is locally bounded (although it may fail to be continuous). Since the Hilbert norm on $\mathcal{H}_e$ is equivalent to the $H^s$ norm, the result follows.
\end{proof}

\paragraph{Step 4: Lie brackets of horizontal vector fields.} 
For every $i\in\{1,\dots,r\}$ and every $u\in H^s(M)$, we define ${\bf X}^u_i$ by ${\bf X}^u_i(\varphi)=uX_i(\varphi)$. In other words, we have ${\bf X}^u_i={\bf X}^{(0,\dots,0,u,0,\dots,0)}$.
The vector field $ {\bf X}_i^u$ is smooth, for every $u\in H^s(M)$. Therefore we can compute Lie brackets of such vector fields. 

First of all, for every $(\varphi,\delta\varphi)\in T\mathcal{D}^s(M)$, and every $i\in\{1,\dots,r\}$, we have
$\d {\bf X}_i^u(\varphi) . \delta\varphi=\partial_\varphi\left(u X_i\circ\varphi\right)(\varphi) . \delta \varphi=u^i (dX_i\circ\varphi) . \delta \varphi$.
Hence, for every $\varphi\in \mathcal{D}^s(M)$, for all elements $u$ and $v$ of $H^s(M)$, and for all indices $i$ and $j$ in $\{1,\dots,r\}$, we have
$$
%\begin{aligned}
\left[\textbf{X}_i^u,\textbf{X}_j^v\right](\varphi)
=
\d{\bf X}_j^v(\varphi).{\bf X}_i^u(\varphi)-\d{\bf X}_i^u(\varphi).{\bf X}_j^v(\varphi)
=
v (\d X_j\circ\varphi).(uX_i\circ\varphi)-u (\d X_i\circ\varphi).(v X_j\circ\varphi)  .
%\end{aligned}
$$
But since, obviously, one has $\d X_i(\varphi(x)).(v(x)X_j(\varphi(x)))= v(x)\d X_i(\varphi(x)).X_j(\varphi(x))$ for every $x\in M$, we obtain
$$
%\begin{aligned}
\left[\textbf{X}_i^u,\textbf{X}_j^v\right](\varphi)=uv\left(\d X_j(\varphi(x)).X_i(\varphi(x))-\d X_i(\varphi(x)).X_j(\varphi(x))\right)
=uv[X_i,X_j]\circ\varphi
=uvX_{i,j}\circ\varphi.
%\end{aligned}
$$
By induction, we get the following lemma (recall that the smooth vector field $X_I$ on $M$ is defined in \eqref{cro} by $X_I=\left[{X}_{i_1},[\dots,[{X}_{i_{j-1}},{X}_{i_j}]\dots\right]$).

\begin{lemma}
Let $j\in\N^*$, let $I=(i_1,\dots,i_j)\in\{1,\dots,r\}^j$, let $\varphi\in\mathcal{D}^s(M)$, and let $u^1,\dots,u^j$ be elements of $H^s(M)$. Then
$$
\left[\textbf{X}_{i_1}^{u_1},[\dots,[\textbf{X}_{i_{j-1}}^{u_{j-1}},\textbf{X}_{i_j}^{u_j}]\dots\right](\varphi)=u_{i_1}\dots u_{i_j}X_I\circ\varphi .$$
\end{lemma}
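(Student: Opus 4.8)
The statement is a clean induction on the bracket depth $j$, and essentially all of its content is already contained in the two-fold bracket computation carried out just above the lemma. The plan is to observe first that that computation never used the fact that $X_i$ and $X_j$ are among the generators $X_1,\dots,X_r$: it used only that they are smooth vector fields on $M$ and that the scalar coefficients lie in $H^s(M)$. Hence, for \emph{any} two smooth vector fields $X,Z$ on $M$ and any $u,w\in H^s(M)$, writing $\mathbf{A}(\varphi)=uX\circ\varphi$ and $\mathbf{B}(\varphi)=wZ\circ\varphi$, the same lines give
\[
[\mathbf{A},\mathbf{B}](\varphi)=uw\,[X,Z]\circ\varphi .
\]
This generalized two-field identity is the only computational input required.

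I would then argue by induction on $j$. The base case $j=1$ is immediate, since $\mathbf{X}_{i_1}^{u_1}(\varphi)=u_1X_{i_1}\circ\varphi=u_1X_I\circ\varphi$ with $I=(i_1)$ (and the case $j=2$ is exactly the displayed formula preceding the lemma). For the inductive step, set $I'=(i_2,\dots,i_j)$ and
\[
\mathbf{Y}=[\mathbf{X}_{i_2}^{u_2},[\dots,[\mathbf{X}_{i_{j-1}}^{u_{j-1}},\mathbf{X}_{i_j}^{u_j}]\dots].
\]
By the induction hypothesis applied to this $(j-1)$-fold bracket, $\mathbf{Y}(\varphi)=(u_2\cdots u_j)\,X_{I'}\circ\varphi$. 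The crucial observation is that $\mathbf{Y}$ is again of the canonical form $w\,Z\circ\varphi$, with $w=u_2\cdots u_j$ and $Z=X_{I'}$ a smooth vector field on $M$; hence the generalized two-field formula applies with $\mathbf{A}=\mathbf{X}_{i_1}^{u_1}$ and $\mathbf{B}=\mathbf{Y}$, yielding
\[
[\mathbf{X}_{i_1}^{u_1},\mathbf{Y}](\varphi)=u_1(u_2\cdots u_j)\,[X_{i_1},X_{I'}]\circ\varphi=(u_1\cdots u_j)\,X_I\circ\varphi,
\]
where the last equality is the defining relation $X_I=[X_{i_1},X_{I'}]$ of the iterated bracket \eqref{cro}. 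This closes the induction.

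The single point deserving justification — and the only place where the hypothesis $s>d/2+1$ enters — is that the product $w=u_2\cdots u_j$ of the scalar coefficients again lies in $H^s(M)$, so that $\mathbf{Y}$ is once more a smooth horizontal vector field of the form covered by Lemma \ref{smoothpar}, and the two-field bracket identity may be legitimately reapplied. This is precisely the fact that $H^s(M)$ is a Banach algebra under pointwise multiplication, which follows from $s>d/2$ via the Sobolev embedding $H^s\hookrightarrow\mathcal{C}^0$ together with the standard multiplication estimate. I expect this algebra property, rather than the bracket bookkeeping, to be the only real substance of the argument; everything else is a verbatim iteration of the computation already displayed before the lemma.
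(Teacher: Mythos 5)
Your proof is correct and follows exactly the route the paper takes: the paper simply states that the lemma follows ``by induction'' from the two-fold bracket computation displayed just before it, and your argument is a careful fleshing-out of that induction. The two points you single out --- that the two-field identity $[\mathbf{A},\mathbf{B}](\varphi)=uw\,[X,Z]\circ\varphi$ holds for arbitrary smooth $X,Z$ and $u,w\in H^s(M)$, and that $H^s(M)$ is a Banach algebra for $s>d/2$ on the compact $M$ so that $u_2\cdots u_j\in H^s(M)$ and the inner bracket is again of the canonical form --- are precisely the details the paper leaves implicit, and you have justified them correctly.
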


\paragraph{Step 5: Taylor expansions of commutators of horizontal flows, and corresponding length.}

For every $i\in\{1,\dots,r\}$ and every $u_i\in H^s(M)$, we denote by $(t,\varphi)\mapsto\Phi_i^{u_i}(t,\varphi)$ the flow of ${\bf X}_i^{u_i}$ on $\mathcal{D}^s(M)$. Moreover, for every $j\in \N^*$ and every $I=(i_1,\dots,i_j)\in\{1,\dots,r\}^j$, we define
$$
\Phi_I(t,{u_1,\dots,u_j}) = 
\Phi^{-u_j}_{i_j}(t)\circ\dots\circ\Phi^{-u_1}_{i_1}(t)\circ\Phi^{u_j}_{i_j}(t)\circ\dots\circ\Phi^{u_1}_{i_1}(t).
$$

\begin{remark}\label{boxing}
Lemma \ref{calcleng} implies that, for every $\varphi\in\mathcal{D}^s(M)$, there exists $C>0$ such that, for all elements $u_1,\dots,u_j$ of $H^s(M)$, of norm small enough, the diffeomorphisms $\varphi$ and $\Phi_I(1,{u_1,\dots,u_j})(\varphi)$ can be connected with a curve of length less than $2C(\Vert u_1\Vert_{H^s}+\dots+\Vert u_j\Vert_{H^s})$, hence
$$
d_{SR}(\varphi,\Phi_I(1,{u_1,\dots,u_j})(\varphi))\leq 2C(\Vert u_1\Vert_{H^s}+\dots+\Vert u_j\Vert_{H^s}).
$$
\end{remark}

The key lemma is the following.

\begin{lemma}\label{taylexp}
Let $\varphi\in \mathcal{D}^s(M)$, let $j\geq 2$ be an integer, and let $I=(i_1,\dots,i_j)\in\{1\dots,r\}^j$. Then
\begin{equation}\label{tayexp}
\Phi_I(1,{u_1,\dots,u_j})(\varphi)=\varphi+u_1\dots u_jX_I\circ\varphi+\mathrm{o}(\Vert (u_1,\dots,u_j)\Vert_{H^s}^{j}) .
\end{equation}
\end{lemma}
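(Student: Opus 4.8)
The plan is to turn \eqref{tayexp} into a one-parameter Taylor expansion and then read off the degree-$j$ coefficient from the iterated-bracket computation of Step 4. Two reductions make this clean. First, by Lemma \ref{smoothpar} each $\mathbf{X}^{u_i}_i$ is a \emph{smooth} vector field on $\mathcal{D}^s(M)$ depending linearly on $u_i\in H^s(M)$; since flows of smooth vector fields on the Banach manifold $\mathcal{D}^s(M)$ exist locally and depend smoothly on time, point and parameters, the map $(u_1,\dots,u_j)\mapsto \Phi_I(1,u_1,\dots,u_j)(\varphi)$ is smooth near $0$ in $H^s(M)^j$, with values in the open set $\mathcal{D}^s(M)$ of an affine Hilbert space. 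It therefore has a Taylor expansion with Peano remainder $\mathrm{o}(\Vert(u_1,\dots,u_j)\Vert_{H^s}^{j})$, and the whole content of the statement is that all Taylor terms of total degree $<j$ vanish while the degree-$j$ term is $u_1\cdots u_j\,X_I\circ\varphi$. Second, from $\mathbf{X}^{tu}_i=t\,\mathbf{X}^{u}_i$ we get the scaling identity $\Phi^{u}_i(t)=\Phi^{tu}_i(1)$, whence $\Phi_I(t,u_1,\dots,u_j)=\Phi_I(1,tu_1,\dots,tu_j)$. Consequently it is equivalent to expand in the single parameter $t\to 0$ with the $u_i$ fixed and to prove
\[
\Phi_I(t,u_1,\dots,u_j)(\varphi)=\varphi+t^{j}\,u_1\cdots u_j\,X_I\circ\varphi+\mathrm{o}(t^{j}).
\]

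I would prove this last expansion by induction on $j$, the engine being the elementary expansion of a commutator of two flows, and organizing the recursion so that it peels off the \emph{outermost} index, matching $X_I=[X_{i_1},X_{(i_2,\dots,i_j)}]$. For the base case $j=2$, expanding each of the four flows $\Phi^{\pm u_k}_{i_k}(t)$ to second order in $t$ and composing shows that the group commutator equals $\mathrm{id}+t^2[\mathbf{X}^{u_1}_{i_1},\mathbf{X}^{u_2}_{i_2}]+\mathrm{o}(t^2)$: the first-order terms cancel by the very structure of the commutator, and Step 4 identifies the bracket as $u_1u_2\,X_{(i_1,i_2)}\circ\varphi$. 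For the inductive step, writing $I''=(i_2,\dots,i_j)$, I would view $\Phi_I$ through its recursive commutator structure, as the group commutator of the single flow $F=\Phi^{u_1}_{i_1}(t)=\mathrm{id}+t\,\mathbf{X}^{u_1}_{i_1}+\mathrm{O}(t^2)$, which is near the identity to order $1$, with the map $G=\Phi_{I''}(t,u_2,\dots,u_j)$, which by the induction hypothesis is near the identity to order $j-1$ with leading term $t^{j-1}\mathbf{W}_{I''}$, where $\mathbf{W}_{I''}$ is the iterated bracket of the bold fields $\mathbf{X}^{u_k}_{i_k}$. The key analytic fact is the \emph{asymmetric} commutator expansion: if $F=\mathrm{id}+t\,\mathbf{A}+\mathrm{O}(t^2)$ and $G=\mathrm{id}+t^{j-1}\mathbf{B}+\mathrm{o}(t^{j-1})$ are smooth and near the identity, then their group commutator is $\mathrm{id}+t^{j}[\mathbf{A},\mathbf{B}]+\mathrm{o}(t^{j})$, with the ordering chosen so that the sign is $+$. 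This follows from a Baker--Campbell--Hausdorff computation on the pullback operators $F^{*},G^{*}$: the lowest surviving term is the bracket $[\mathbf{A},\mathbf{B}]$ of the two leading terms, and every other term is of strictly higher order in $t$. Applying this with $\mathbf{A}=\mathbf{X}^{u_1}_{i_1}$ and $\mathbf{B}=\mathbf{W}_{I''}$, and then invoking the iterated-bracket formula of Step 4 to evaluate $[\mathbf{X}^{u_1}_{i_1},\mathbf{W}_{I''}](\varphi)=u_1\cdots u_j\,X_I\circ\varphi$, closes the induction.

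The main obstacle I anticipate is the bookkeeping of the asymmetric commutator expansion, namely verifying that the higher-order corrections to $F$ and $G$ contribute only at orders $>j$. This is exactly where one must track that each such correction enters the BCH series paired with at least one factor of the other map, so that its order is raised above $t^{j}$. What makes all of this legitimate in infinite dimension is precisely Lemma \ref{smoothpar}: although the parametrization $(\varphi,X)\mapsto X\circ\varphi$ of $\mathcal{H}^s$ is merely continuous, the vector fields $\mathbf{X}^u_i$ are genuinely smooth, so their flows and the required compositions possess honest finite-order Taylor expansions with controlled remainders, and the formal BCH identities become mere consequences of these expansions. A second point, which is what keeps the brackets correction-free, is that the coefficients $u_i$ are functions on $M$ and hence constant in the $\mathcal{D}^s(M)$-variable $\varphi$; this is why Step 4 yields the exact equality $[\mathbf{X}^{u}_i,\mathbf{X}^{v}_j](\varphi)=uv\,X_{(i,j)}\circ\varphi$ with no lower-order terms, and it is this exactness that propagates through the induction to give the clean leading coefficient $u_1\cdots u_j\,X_I\circ\varphi$.
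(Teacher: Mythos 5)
Your proposal follows the same route as the paper's proof. Both arguments rest on the two reductions you make explicit: the scaling identity $\Phi_I(t,u_1,\dots,u_j)=\Phi_I(1,tu_1,\dots,tu_j)$ converts the statement into a one-parameter expansion in $t$, and the smoothness of $(u,\varphi)\mapsto\Phi_I(1,u)(\varphi)$ (via Lemma \ref{smoothpar} and smooth dependence of flows on parameters) guarantees an order-$j$ Taylor expansion in $u$ with Peano remainder, whose homogeneous pieces are then pinned down by the $t$-expansion. The only substantive difference is that the paper dismisses the one-parameter commutator expansion as ``well known from the definition of a Lie bracket,'' whereas you prove it by induction on $j$ using an asymmetric BCH-type estimate; that supplement is sound (your base case $j=2$ matches the bracket convention of Step 4, and your remark that the coefficients $u_i$ are constant in $\varphi$ is indeed what keeps the bracket formula exact) and makes the argument more self-contained.

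One point deserves attention, and it concerns the statement as much as your induction. Your recursion views $\Phi_I(t,\cdot)$ as the group commutator of $\Phi^{u_1}_{i_1}(t)$ with $\Phi_{I''}(t,\cdot)$, $I''=(i_2,\dots,i_j)$, i.e.\ as a \emph{nested} commutator word. The displayed definition in Step 5 is instead the flat word $\Phi^{-u_j}_{i_j}(t)\circ\dots\circ\Phi^{-u_1}_{i_1}(t)\circ\Phi^{u_j}_{i_j}(t)\circ\dots\circ\Phi^{u_1}_{i_1}(t)$, which for $j\geq3$ is not a nested commutator: a BCH computation shows that its expansion already contains, at order $t^{2}$, the sum of all pairwise brackets $\sum_{k<l}\bigl[\mathbf{X}^{u_k}_{i_k},\mathbf{X}^{u_l}_{i_l}\bigr]$ (up to sign), which is generically nonzero, so \eqref{tayexp} fails for that word as soon as $j\geq3$. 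Your argument is therefore correct for, and implicitly uses, the nested-commutator reading of $\Phi_I$ --- which is the reading the lemma actually requires --- but the inductive step as written does not apply to the object literally defined in Step 5. With that reading fixed, the only remaining care is the one you already flag: the expansions of $F$ and $G$ must be carried one order beyond their leading terms so that every correction pairs with at least one factor of the other map and lands at order strictly greater than $j$.
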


\begin{proof}
From the definition of a Lie bracket, it is well known that, for fixed $u_1,\dots,u_j$ in $H^s(M)$ and for small $t\in\R$, one has
$$
\Phi_I(t,{u_1,\dots,u_j})(\varphi)=\varphi+t^j\left[\textbf{X}_{i_1}^{u_1},[\dots,[\textbf{X}_{i_{j-1}}^{u_{j-1}},\textbf{X}_{i_j}^{u_j}]\dots\right](\varphi)+\mathrm{o}(t^{j})=\varphi+t^ju_1\dots u_jX_I\circ\varphi+\mathrm{o}(t^{j+1}).
$$
Obviously, we have $t\textbf{X}_{i_k}^{u_k}=\textbf{X}_{i_k}^{tu_k}$ for every $k$, and hence $\Phi_I(t,u_1,\dots,u_j)(\varphi)=\Phi_I(1,{tu_1,\dots,tu_j})(\varphi)$.
As a consequence, if $\Phi_I^{u_1,\dots,u_j}(1,\varphi)$ has a Taylor expansion in $(u_1,\dots,u_j)$, then this expansion is given by \eqref{tayexp}. Since the term at the left-hand side of \eqref{tayexp} is smooth in $(\varphi,u)$, it has a Taylor expansion in $u$ of order $j$. The result follows.
\end{proof}

\paragraph{Step 6: end of the proof.}
Since the family $(X_1,\dots,X_r)$ is bracket-generating, there exist subsets $I_1,\dots,I_m$ of $\{1,\dots,r\}$, of increasing cardinals $j_k=\vert I_k\vert$, such that any $X\in \Gamma^s(TM)$ is an a linear combination of $X_{I_1},\dots,X_{I_m}$ (with coefficients in $H^s(M)$). For every $k\in \{1,\dots,m\}$, we consider the mapping $\phi_k$ defined on a neighborhood of $(0,e)$ in $H^s(M)\times\mathcal{D}^s(M)$, with values in a neighborhood of $e$ in $\mathcal{D}^s(M)$, given by
$$
\phi_k(u,\varphi)=\Phi_{I_k}\left(1,\Vert u\Vert_{H^s}^{\frac{1-j_k}{j_k}}u,\Vert u\Vert_{H^s}^{1/j_k},\dots,\Vert u\Vert_{H^s}^{1/j_k}\right)(\varphi).
$$
This mapping is smooth with respect to $u$ outside of $0$, and it is of class $\mathcal{C}^1$ around $0$ since, according to Lemma \ref{taylexp}, we have $\phi_k(u,\varphi)=\varphi+uX_{I_k}\circ\varphi+\mathrm{o}(\Vert u\Vert_{H^s})$, for every $\varphi\in\mathcal{D}^s(M)$.
Moreover, according to Remark \ref{boxing}, for every $\varphi\in\mathcal{D}^s(M)$, there exists a constant $C>0$ such that 
\begin{equation}\label{trlength}
d_{SR}(\varphi,\phi_k(u,\varphi))\leq C\Vert u\Vert_{H^s}^{1/j_k}.
\end{equation}
as soon as the $H^s$ norm of $u$ is small enough.

Finally, we consider the mapping $\phi$ defined on a neighborhood of $0$ in $H^s(M,\R^m)$, with values in $\mathcal{D}^s(M)$, defined by
$$
\phi(u_1,\dots,u_m)=\phi_m(u_m)\circ\dots\circ\phi_1(u_1)(e).
$$
The mapping $\phi$ is of class $\mathcal{C}^1$ around $0$, and its differential at $0$ is given by
$$
\d\phi(0).(\delta u_1,\dots,\delta u_m)=\sum_{k=1}^m u_k X_{I_k}\in \Gamma^s(TM),
$$
for every $(\delta u_1,\dots,\delta u_m)\in H^s(M,\R^m)$. The mapping $\d\phi(0)$ is surjective, since the vector fields $X_{I_1},\dots,X_{I_n}$ generates $TM$ by assumption, and therefore $\phi$ is a local submersion at $0$. Therefore, the image by $\phi$ of any (small enough) neighborhood of $0$ in $H^s(M,\R^m)$ is a neighborhood of $e$ in $\mathcal{D}^s(M)$. Let $\varepsilon>0$ be small enough that $\phi$ is defined and smooth on the ball $B$ of center 0 and radius $\varepsilon^{j_m}$ in $H^s(M,\R^m)$. It follows that there exists a neighborhood $\mathcal{U}_\varepsilon$ of $e$ in $\mathcal{D}^s(M)$ such that $\mathcal{U}\subset \phi(B(0,\varepsilon^{j_m})))$.
Besides, for $\varepsilon>0$ small enough, \eqref{trlength} also implies that there exists $C>0$ such that 
$$
d_{SR}(e,\phi(u_1,\dots,u_m))\leq 2C\sum_{k=1}^m\Vert u_k\Vert_{H^s}^{1/j_n}\leq 2mC\varepsilon, 
$$
for every $(u_1,\dots,u_m)\in B(0,\varepsilon^{j_m})$.
Hence, any sub-Riemannian ball centered at $e$ contains a neighborhood $\mathcal{U}$ of $e$ for the intrinsic manifold topology of $\mathcal{D}^s(M)$. 
The theorem is proved.

\end{document}